\newtheorem{theorem}{Theorem}[section]
\newtheorem{proposition}[theorem]{Proposition}
\newtheorem{corollary}[theorem]{Corollary}
\newtheorem{lemma}[theorem]{Lemma}
\newtheorem{remark}[theorem]{Remark}
\newcommand{\defeq}{\stackrel{\scriptscriptstyle\textup{def}}{=}}
\newcommand{\bm}[1]{\boldsymbol{#1}}
\newcommand{\E}{\bm{E}}
\newcommand{\R}{\mathbb{R}}
\newcommand{\cR}{\mathcal{R}}
\newcommand{\cD}{\mathcal{D}}
\newcommand{\cF}{\mathcal{F}}
\newcommand{\cT}{\mathcal{T}}
\newcommand{\T}{\mathbb{T}}
\renewcommand{\P}{\mathbb{P}}
\newcommand{\Xal}[1][]{ X^{\eps #1} }
\newcommand{\Val}{V^\eps}
\newcommand{\Zal}{Z^\eps}
\newcommand{\Vtilal}[1][]{\tilde V^{\eps #1}}
\newcommand\cO{\mathcal{O}}
\newcommand\eps{\varepsilon}
\newcommand{\norm}[1]{\left\lVert #1 \right\rVert}
\newcommand{\grad}{\nabla}
\newcommand{\abs}[1]{\left\lvert #1 \right\rvert}
\newcommand{\lap}{\Delta}
\newcommand{\paren}[1]{\left(#1\right)}
\newcommand{\mc}[1]{\mathcal{#1}}
\numberwithin{equation}{section}
\newcounter{SVCase}\setcounter{SVCase}{0}
\newcommand{\case}[1][\relax]{\smallskip\par\noindent\stepcounter{SVCase}\emph{Case \Roman{SVCase}: \ifx#1\relax\relax\else#1.\fi}}
\newcounter{SVStep}\setcounter{SVStep}{0}
\newcommand{\restartsteps}[1][0]{\setcounter{SVStep}{#1}}
\newcommand{\step}[1][\relax]{\smallskip\par\noindent\stepcounter{SVStep}{\bf Step \arabic{SVStep}: \ifx#1\relax\relax\else#1.\fi}}
\title[Diffusion approximation of Kinetic Langevin]{Error analysis of an acceleration corrected diffusion approximation of Langevin dynamics with background flow}
\author[Mori]{Yoichiro Mori}
\thanks{YM and CS were partially supported by the National Science Foundation (NSF) grant DMR2309034 (USA), and the Math+X award from the Simons Foundation (Award ID 234606). }
\address[Mori]{Department of Mathematics, Department of Biology, University of Pennsylvania, Philadelphia PA, USA}
\email{y1mori@sas.upenn.edu}
\author[Sintavanuruk]{Chanoknun Sintavanuruk}
\address[Sintavanuruk]{Department of Mathematics, University of Pennsylvania, Philadelphia PA, USA}
\address{Department of Physiology, Faculty of Medicine Siriraj Hospital, Mahidol University, Bangkok, Thailand}
\email{aidensin@sas.upenn.edu}
\author[Van]{Truong-Son P. Van}
\address[Van]{Ho Chi Minh City, Vietnam }
\email{truongson.vanp@gmail.com}
\date{\today}
\begin{document}

\begin{abstract}
	We consider the problem of approximating the Langevin dynamics of inertial particles being transported by a background flow.
	In particular, we study an acceleration corrected advection-diffusion approximation to the Langevin dynamics, a popular approximation in the study of turbulent transport.
	We prove error estimates in the averaging regime in which the dimensionless relaxation timescale $\eps$ is the small parameter.
	We show that for any finite time interval,
	the approximation error is of order $\cO(\eps)$ in the strong sense and $\cO(\eps^2)$ in the weak sense, whose optimality is checked against computational experiment.
	Furthermore, we present numerical evidence suggesting that this approximation also captures the long-time behavior of the Langevin dynamics.
\end{abstract}

\maketitle

\section{Introduction}
\label{sec:intro}
\subsection{Overview and main result}
Consider the following Langevin equation describing the motion of a particle subject to friction against a background flow and stochastic forcing:
\begin{subequations}
	\begin{equation}
		dX_{t}  =V_t\,dt,
	\end{equation}
	\begin{equation}
		m\,dV_t   = -\gamma \left(V_t -b(X_t,t)\right) \, dt + \sqrt{ 2 \cD }\,dW_t \,.
	\end{equation}
	\label{eq:Langevin_dim}
\end{subequations}
Here, $X_t\in \R^n$ is the position of the particle, $V_t \in \R^n$ the velocity, $m$ the mass,
$b$ the background flow field, $\gamma$ the friction coefficient, $\cD$ the constant representing the strength of the random force and $W_t$
is the standard $n$-dimensional Brownian motion.
In the context of molecular particles,
$\cD = \gamma k_B T$, where $k_B$ is the Boltzmann constant and $T$ the temperature~\cite{PavliotisStochasticProcesses2014}.

The probability density $\rho(x,v,t)$ of $(X_t, V_t)$ is given by
the forward Kolmogorov equation of \eqref{eq:Langevin_dim}---also known as kinetic Fokker-Planck (KFP) equation:
\begin{equation}
	\partial_{t} \rho + \nabla_x \cdot (v\rho) + \frac{\gamma}{m}\nabla_{v}\cdot\left(\left(b-v\right)\rho \right) - \frac{\cD}{m^2} \Delta_{v}\rho = 0 \,.
	\label{eq:KFP_dim}
\end{equation}
For ease of analysis,
we restrict the spatial region of $x$ (or equivalently $X_t$) to be $\mathbb{R}^n/(L_*\mathbb{Z})^n$, the $n$-dimensional torus of side length $L_*$.
The velocity field $v$ is in $\mathbb{R}^n$. Equation \eqref{eq:KFP_dim} is thus an evolution equation in $(x,v,t)\in \mathbb{R}^n/(L_*\mathbb{Z})^n\times \mathbb{R}^n\times (0,\infty)$.
The high dimensionality and the unboundedness of $v$ makes numerical computations challenging.
On the other hand, to obtain population-level statistics using SDE \eqref{eq:Langevin_dim},
one must generate a large number of numerical sample trajectories of \eqref{eq:Langevin_dim}.
Furthermore, under certain parametric regimes, the accurate generation of even a single trajectory can be computationally costly.
This motivates the study of approximations to the above equations.

Let us make the above equations dimensionless. Let $b_*$ denote the representative magnitude of the background velocity field $b$.
Take $T_*=L_*/b_*$ to be the representative timescale. There are two dimensionless parameters in the system:
\begin{equation}
	\varepsilon = \frac{m/\gamma}{L_*/b_*}, \quad \mu=\frac{\gamma/m}{\cD/(m^2b_*^2)}.
\end{equation}
The parameter $\varepsilon$ is the ratio between the relaxation time of particle velocity and the approximate time it takes for the particle
to traverse a distance $L_*$. The parameter $\mu$ controls the ratio between frictional and random forcing.
We let $\varepsilon$ be a small parameter and $\mu$ order $1$ with respect to $\varepsilon$.
We shall set $\mu=1$ in the sequel for notational simplicity; the case $\mu\neq 1$ can be dealt with in exactly the same way.
Equation~\eqref{eq:Langevin_dim} then becomes
\begin{subequations}
	\label{eq:Langevin}
	\begin{equation}
		\label{eq:Langevin-1}
		dX^\eps_{t} = V^\eps_t\,dt \,,
	\end{equation}
	\begin{equation}
		\label{eq:Langevin-2}
		dV^\eps_t
		= \frac{1}{\eps}( b(X^\eps_t,t)-V^\eps_t)\,dt + \sqrt{ \frac 2 \eps }\,dW_t \,.
	\end{equation}
\end{subequations}
The corresponding Fokker-Planck equation is
\begin{equation}
	\partial_{t} \rho^\eps + \nabla_x \cdot (v\rho^\eps) + \eps^{-1}\nabla_{v}\cdot(( b-v)\rho^\eps) - \eps^{-1} \Delta_{v}\rho^\eps = 0 \,.
	\label{eq:KFP}
\end{equation}
The above scaling
is known as the {\em averaging regime}.
Other parametric scalings of interest include the over-damped and under-damped regimes (see, for example \cite{PavliotisStochasticProcesses2014, CerraiFreidlinSmallMass2011, freidlinRandomPerturbationsDynamical2012, pavliotisMultiscaleMethods2008}), which we will not study here.

To obtain an approximation to the above when $\varepsilon$ is small, first consider the case when $b$ is a constant that does not depend on $x$ or $t$.
In this case, the distribution of $V^\eps_t$ will simply converge to a Gaussian $\mathcal{N}(b,I_n)$ with average $b$ and covariance matrix equal to the $n\times n$ identity matrix $I_n$.
Since $\varepsilon$ is small and thus velocity relaxation is fast, the distribution of $V^\eps_t$ is expected to look roughly like $\mathcal{N}(b,I_n)$ even when $b$ is non-constant.
Thus, on average, $V^\eps_t$ should look like $b$. We may thus expect $\bar{X}_t$, below, to serve as an approximation for $X_t^\eps$:
\begin{equation}
	\label{eq:ODE}
	\frac{d}{dt}\bar{X}_t = b(\bar{X}_t,t) \,.
\end{equation}
This is the classical averaging principle.
The error rates of this approximation are $\cO(\sqrt\eps)$ and $\cO(\eps)$ in the strong and weak senses, respectively.
Note that the averaging principle is not trivial in the sense that even as $\varepsilon\to 0$, the velocity distribution is not expected to converge to a delta mass at $v=b$,
but only to a distribution roughly centered around $b$ with finite non-zero variance.
The literature on this subject is vast and has a long history. We refer the reader to the following works and the references therein for further information~\cite{freidlinRandomPerturbationsDynamical2012,RocknerSunXieStrongWeak2019, pavliotisMultiscaleMethods2008, KhasminskiiLimitTheorem1966, KhasminskiiYinAveragingPrinciples2004, WeinanLiuVanden-EijndenAnalysisMultiscale2005}.

We may also formally obtain the above averaging principle as follows.
Multiply both sides of~\eqref{eq:Langevin-2} by $\eps$ to obtain:
\begin{equation}\label{approx_of_v}
	V^\eps_t = b(X_t^\eps,t) + \sqrt{2\eps} dW_t+\varepsilon dV^\eps_t.
\end{equation}
Using the first term as an approximation to $V^\eps_t$, we obtain \eqref{eq:ODE}.
One may attempt to improve the error rate, by using the first two terms of \eqref{approx_of_v}:
\begin{equation}
	\label{eq:Hasselman-simple}
	d Z^\eps_t = b(Z^\eps_t,t) dt + \sqrt{2\eps } dW_t\,.
\end{equation}
whose probability density function $u^\eps(x,t)$ satisfies the Fokker-Planck equation:
\begin{equation}
	\label{eq:naive}
	\partial_t u^\eps + \nabla_x\cdot( b u^\eps) - \eps \Delta_x u^\eps = 0 \,.
\end{equation}
We will show that the strong error rate of this approximation is $\cO(\eps)$,
an improvement of $\cO(\sqrt\eps)$ from~\eqref{eq:ODE} (see Corollary~\ref{cor:rate}).
However, the weak error rate is not improved by this approximation (see Table~\ref{tab:comparison}).

In this work, we study a different diffusion approximation of~\eqref{eq:Langevin}:
\begin{equation}\label{eq:advection-diffusion}
	dZ^\eps_t =
	F(\Zal_t, t) dt
	+ \sqrt{2\eps}dW_t, \quad F(x,t) = b(x,t)- \eps  (\partial_t b + D_x b \, b),
\end{equation}
where $D_x b$ denotes the $n\times n$ matrix of partial derivatives of the vector field $b$.
The Fokker-Planck equation for \eqref{eq:advection-diffusion} reads
\begin{equation}
	\partial_t u^\eps + \nabla_x \cdot \left(
	F u^\eps\right) - \eps \Delta_x u^\eps = 0 \,.
	\label{eq:PDEApprox}
\end{equation}
Note that $\partial_t b + D_x b \, b$ is the acceleration of the background flow field. The corrected background flow field $F(x,t)$ may thus be understood as the original velocity field $b$ minus acceleration times the relaxation time scale $\eps$.

The acceleration corrected diffusion approximation \eqref{eq:advection-diffusion} has been used extensively in the fields of turbulent transport and cloud physics
~\cite{ShawPARTICLETURBULENCEINTERACTIONS2003, ElperinKleeorinRogachevskiiSelfExcitationFluctuations1996, RogachevskiiIntroductionTurbulent2021}. 
Recently, a few works started to notice that the appearance of the accelerated correction, though small, 
may lead to new challenges in  homogenization and enhanced dissipation problems that come from inertial particles~\cite{CotiZelatiPavliotisHomogenizationHypocoercivity2020,RenaudVannesteDispersionInertial2020} .
However, to the best of our knowledge, rigorous quantitative treatment of the effect of this correction term remains very limited, even non-existent.

Without the noise term, this approximation
was first proposed by Maxey in his seminal work~\cite{MaxeyGravitationalSettling1987}, via formal asymptotic calculations.
This approximation was also used in \cite{PavliotisStuartZygalakisCalculatingEffective2009} to study the
effective diffusivity of inertial particles (see Eq. 4.10 in  ~\cite{PavliotisStuartZygalakisCalculatingEffective2009}).
Their derivation is through formal asymptotic calculations
of the backward Kolmogorov equation of \eqref{eq:Langevin}.

When the noise term is absent, the accuracy of \eqref{eq:advection-diffusion} as an approximation to \eqref{eq:Langevin} reduces to a problem for a deterministic ODE system. This approximation has been studied in \cite{HallerSapsisWhereInertial2008}, where it is shown that the error is $\mathcal{O}(\varepsilon^2)$. However, to the best of our knowledge, there has been no mathematical study of the error in the  stochastic case.

Let us state our main analytical result. The physical space $x$ is in the $n$-dimensional torus  $\mathbb{T}^n=\mathbb{R}^n/\mathbb{Z}^n$.
We make the following assumptions on the initial data for \eqref{eq:Langevin} and \eqref{eq:advection-diffusion} and on the smoothness of the background velocity field $b$:
\begin{subequations}
	\label{assumptions}
	\begin{equation}
		b\in C^\infty(\T^{n}\times [0,\infty) ; \R^n), \tag{H1}
		\label{H1}
	\end{equation}
	\begin{equation}
		X^\varepsilon_0 = Z^\varepsilon_0 = x \in \T^n  \,. \tag{H2}
		\label{H2}
	\end{equation}
	Define $\tilde{V}^\varepsilon_0:= V^\varepsilon_0 - b(X^\varepsilon_0,0)$.
	We then require for $p\geq 4$
	\begin{equation}
		\E \tilde{V}^\varepsilon_0  = 0,\quad\E |\tilde{V}^\varepsilon_0|^p <\infty  \,.
		\tag{H3}
		\label{H3}
	\end{equation}
\end{subequations}
In this paper, for $S=\T^n\times [0,\infty)$ or $S= \T^n$ we shall define the following norm for $f\in C^k(S; \R^d)$, $d\in \mathbb{N}$:
\begin{equation}
	\|f\|_{C^k}:= \max_{|\alpha|\leq k} \sup_{z\in S}
	\left|D^\alpha f(z)\right|,
\end{equation}
where $D^\alpha$ denotes the partial derivative corresponding to the multi-index $\alpha$.

\begin{theorem}
	\label{t:estimate}
  Fix $T >0$ and $p\geq4$.
	Let $W_t$ be a standard Brownian motion in $\R^n$,
	$(\Xal_t, \Val_t)$ the solution to the system of SDE~\eqref{eq:Langevin},
	$\Zal_t$  the solution of~\eqref{eq:advection-diffusion},
	and that they satisfy~\eqref{assumptions}.
	Suppose~\eqref{H1}-\eqref{H3} are true for $p$.
	Then, there exist constants $C_T$ and $\eps_0 >0$ depending on $T$ such that
	for $\eps < \eps_0$,
	\begin{equation}
    \sup_{t \in [0,T]}\E \left| \Xal_t - \Zal_t \right|^p \leq C_T  \eps^p.
		\label{ine:strong-est}
	\end{equation}
	Furthermore, for every $\varphi \in C^\infty(\T^n)$, there exists  $C_T >0$,
	\begin{equation}
		\label{eq:BrownianEst-upgrade}
		\sup_{t \in [0,T]}\left|\E\left(\varphi(\Xal_t) - \varphi(\Zal_t)\right) \right|
		\leq  C_T \eps^{2}\,.
	\end{equation}
\end{theorem}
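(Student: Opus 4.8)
The plan is to prove the strong bound \eqref{ine:strong-est} and the weak bound \eqref{eq:BrownianEst-upgrade} from a single algebraic identity. Set $\tilde V^\eps_t := V^\eps_t - b(X^\eps_t,t)$. Since $dX^\eps_t = V^\eps_t\,dt$ has no martingale part, Itô's formula gives $d\big(b(X^\eps_t,t)\big) = (\partial_t b + D_x b\,V^\eps_t)\,dt$, and \eqref{eq:Langevin-2} rearranges to the key identity
\[
	\tilde V^\eps_t\,dt = -\eps\,d\tilde V^\eps_t + \sqrt{2\eps}\,dW_t - \eps\big(\partial_t b + D_x b\,V^\eps_t\big)\,dt .
\]
Writing $V^\eps_t = b + \tilde V^\eps_t$ and recalling $F = b - \eps(\partial_t b + D_x b\,b)$, this says $dX^\eps_t = F(X^\eps_t,t)\,dt + \sqrt{2\eps}\,dW_t - \eps\,D_x b(X^\eps_t,t)\,\tilde V^\eps_t\,dt - \eps\,d\tilde V^\eps_t$. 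A preliminary moment lemma (apply Itô to $(|\tilde V^\eps_t|^2+\delta)^{p/2}$, use that for $\eps$ small the fast relaxation $-\eps^{-1}\tilde V^\eps_t$ dominates the bounded $b$-dependent drift while the diffusion coefficient $\sqrt{2/\eps}$ yields an $\cO(1)$ stationary variance, and let $\delta\to0$) gives $\sup_{t\le T}\E|\tilde V^\eps_t|^p \le C$ uniformly in $\eps$, the initial contribution being controlled by \eqref{H3}. Subtracting \eqref{eq:advection-diffusion} driven by the same $W$ cancels the Brownian terms, so $E_t := X^\eps_t - Z^\eps_t$ satisfies an integral equation with Lipschitz drift $F(X^\eps_\cdot,\cdot) - F(Z^\eps_\cdot,\cdot)$ (for $\eps$ small $\|D_xF\|_{C^0}$ is bounded uniformly) plus two forcing terms of size $\eps$; a Grönwall estimate on $\E|E_t|^p$ together with the moment lemma gives \eqref{ine:strong-est}.

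For the weak bound, fix $T$ and let $g(x,t) := \E[\varphi(Z^\eps_T)\mid Z^\eps_t = x]$, which solves the backward Kolmogorov equation $\partial_t g + F\cdot\nabla_x g + \eps\Delta_x g = 0$ with $g(\cdot,T) = \varphi$. A regularity lemma shows $\|g\|_{C^k} \le C_k$ uniformly in $\eps\in(0,\eps_0]$ for every $k$: apply the maximum principle successively to the equations satisfied by the derivatives $D^\alpha g$, noting that the viscous term only helps and that the $\eps\to0$ limit is transport along the smooth field $b$. Since $X^\eps_0 = Z^\eps_0 = x$ is deterministic, $\E\varphi(X^\eps_T) - \E\varphi(Z^\eps_T) = \E[g(X^\eps_T,T) - g(X^\eps_0,0)]$; applying Itô's formula to $t\mapsto g(X^\eps_t,t)$ (again no martingale part, since $X^\eps$ has none) and using the PDE gives
\[
	\E\varphi(X^\eps_T) - \E\varphi(Z^\eps_T) = \E\int_0^T \Big[\big(\tilde V^\eps_t + \eps(\partial_t b + D_x b\,b)\big)\cdot\nabla_x g - \eps\Delta_x g\Big]\,dt .
\]

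The crux is extracting an extra power of $\eps$ from $\E\int_0^T \tilde V^\eps_t\cdot\nabla_x g\,dt$. Substituting the key identity: the $\sqrt{2\eps}\,dW_t$ contribution has mean zero (the integrand is bounded); the $\eps(\partial_t b + D_x b\,b)\cdot\nabla_x g$ contribution cancels exactly the corresponding term in the display; $-\eps\,\nabla_x g\cdot D_x b\,\tilde V^\eps_t\,dt$ is $\eps$ times an expression of the form $\E[h(X^\eps_t,t)\,\tilde V^\eps_t]$ with $h$ smooth; and integrating $-\eps\,\nabla_x g\cdot d\tilde V^\eps_t$ by parts (no covariation, since $\nabla_x g(X^\eps_\cdot,\cdot)$ is of bounded variation) produces boundary terms at $t=0$, which vanish because $\E\tilde V^\eps_0 = 0$ and $X^\eps_0$ is deterministic, boundary terms at $t=T$ again of the form $\eps\,\E[h(X^\eps_T,T)\,\tilde V^\eps_T]$, and $\eps\,\E\int_0^T \tilde V^\eps_t\cdot\big(\partial_t\nabla_x g + (V^\eps_t\cdot\nabla_x)\nabla_x g\big)\,dt$. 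Writing $V^\eps_t = b + \tilde V^\eps_t$ everywhere, this last integral splits into $\eps$ times terms of the form $\E[h(X^\eps_t,t)\,\tilde V^\eps_t]$ and the single genuinely quadratic term $\eps\,\E\int_0^T \sum_{i,j}(\tilde V^\eps_t)_i(\tilde V^\eps_t)_j\,\partial_{x_i}\partial_{x_j}g\,dt$. Two correlation lemmas, proved by the integrating-factor trick: setting $a(t) := \E[h(X^\eps_t,t)(\tilde V^\eps_t)_i]$ one finds $a' = -\eps^{-1}a$ plus a term bounded uniformly in $\eps$, with $a(0)=0$; setting $c_{ij}(t) := \E[h(X^\eps_t,t)(\tilde V^\eps_t)_i(\tilde V^\eps_t)_j] - \delta_{ij}\E[h(X^\eps_t,t)]$ one finds $c_{ij}' = -2\eps^{-1}c_{ij}$ plus a uniformly bounded term (here the diffusion produces the source $2\eps^{-1}\delta_{ij}\E[h]$, which is why the $\delta_{ij}$ subtraction is the right one). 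These give $|\E[h(X^\eps_t,t)(\tilde V^\eps_t)_i]| \le C\eps$ uniformly on $[0,T]$ and $\int_0^T |c_{ij}(t)|\,dt \le C\eps$. The first makes every $\E[h\,\tilde V^\eps_t]$-type term $\cO(\eps^2)$; the second yields $\eps\,\E\int_0^T \sum_{i,j}(\tilde V^\eps_t)_i(\tilde V^\eps_t)_j\,\partial_{x_i}\partial_{x_j}g\,dt = \eps\,\E\int_0^T \Delta_x g\,dt + \cO(\eps^2)$, which cancels the remaining $-\eps\Delta_x g$ term. Summing, the right-hand side is $\cO(\eps^2)$, which is \eqref{eq:BrownianEst-upgrade}.

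The main obstacle is this last cancellation, and within it the quadratic correlation $c_{ij}$: \eqref{H3} does not force $\E[(\tilde V^\eps_0)_i(\tilde V^\eps_0)_j] = \delta_{ij}$, so $c_{ij}(0)$ need not be small and the pointwise bound $|c_{ij}(t)| = \cO(\eps)$ genuinely fails inside an initial layer of width $\sim\eps$; one must instead use the time-integrated bound $\int_0^T |c_{ij}(t)|\,dt = \cO(\eps)$, which is exactly what survives once the prefactor $\eps$ is restored, and which suffices because the offending term already carries that $\eps$. A secondary technical point is the uniform-in-$\eps$ control of $\|g\|_{C^k}$: the limiting problem is only transport, not uniformly parabolic as $\eps\to0$, so the estimates must be organized so that regularity is never extracted from the viscous term; the standard differentiate-then-apply-the-maximum-principle scheme achieves this, with constants depending on $T$, $\|b\|_{C^{k+1}}$, and $\|\varphi\|_{C^k}$.
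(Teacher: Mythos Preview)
Your argument is correct and follows a genuinely different route from the paper. The key divergence is in how you parametrize the ``fast'' variable: the paper takes $\tilde V^\eps_t$ to be the explicit Ornstein--Uhlenbeck process (so that $dX^\eps_t=(A_t+\tilde V^\eps_t)\,dt$ with $A_t$ an exponentially weighted average of $b$), and then spends considerable effort quantifying $A_t-F_t$ via the auxiliary quantity $L_t$; for the weak estimate it introduces the running-mean process $Y_t=X^\eps_t-\E X^\eps_t$, the explicit process $P_t$, and a chain of Lemmas~\ref{lem:martingale-approx}, \ref{lem:diffusion-approx}, \ref{lem:Y4t}--\ref{lem:average-approximate-2} to control the pieces $IA,IB,IC$ and $II$. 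You instead take $\tilde V^\eps_t=V^\eps_t-b(X^\eps_t,t)$ and derive the single algebraic identity $\tilde V^\eps_t\,dt=-\eps\,d\tilde V^\eps_t+\sqrt{2\eps}\,dW_t-\eps(\partial_t b+D_xb\,V^\eps_t)\,dt$, which immediately recasts $dX^\eps_t$ as $F\,dt+\sqrt{2\eps}\,dW_t$ plus two explicit $\cO(\eps)$ corrections. Your strong estimate is then a direct Gr\"onwall, and your weak estimate reduces (after one substitution and one integration by parts) to two ODE-type ``correlation lemmas'' for $a(t)=\E[h(X^\eps_t,t)\tilde V^{\eps,i}_t]$ and $c_{ij}(t)=\E[h\,\tilde V^{\eps,i}_t\tilde V^{\eps,j}_t]-\delta_{ij}\E h$, each proved by the integrating-factor trick.

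What your approach buys is economy and transparency: no $A_t$, no $L_t$, no $P_t$, no $Y_t$; the $\eps^2$ weak rate is visibly a consequence of the $\cO(\eps)$ relaxation of the correlations $a$ and $c_{ij}$, and the crucial cancellation with $-\eps\Delta_x g$ appears as the natural centering of $c_{ij}$. You also correctly isolate the one place where the initial condition bites, namely $c_{ij}(0)\ne0$ in general, and handle it by the time-integrated bound rather than a pointwise one. What the paper's approach buys is a sharper separation of the ``near-martingale'' structure (Lemmas~\ref{lem:martingale-approx}--\ref{lem:diffusion-approx} are stated for general adapted integrands and may be reusable elsewhere) and explicit dependence on $T$ in several intermediate steps; it also makes the role of the acceleration correction slightly more visible at the level of $A_t-F_t$ (cf.\ Remark~\ref{rem:O2}). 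Both proofs rely on the same uniform-in-$\eps$ regularity of the backward Kolmogorov solution, which you justify by the same maximum-principle differentiation scheme the paper cites.
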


We summarize the error rates of different approximations of equation~\eqref{eq:Langevin} in the following table.
The first row shows the classical result.
Results in the second and third rows are new.
Our main result is in the third row.
The second row is a corollary of the third row.

\begin{table}[h!]
	\centering
	\renewcommand{\arraystretch}{1.8}
	\begin{tabular}{|c|c|c|}
		\hline
		\textbf{Equation}                                                                   & \textbf{Strong Error} & \textbf{Weak Error} \\
		\hline
		$dX_t = b\,dt$ \cite{RocknerSunXieStrongWeak2019}                                   & $\sqrt{\varepsilon}$  & $\varepsilon$       \\
		\hline
		$dX_t = b \, dt + \sqrt{2\varepsilon}\,dW_t$ \cite{HasselmannStochasticClimate1976} & $\varepsilon$         & $\varepsilon$       \\
		\hline
		$dX_t = (b - \varepsilon (\partial_t b+D_xb\,b))\, dt + \sqrt{2\varepsilon}\,dW_t$           & $\varepsilon$         & $\varepsilon^2$     \\
		\hline
	\end{tabular}
	\caption{Comparison of Strong and weak error orders for different approximations of equation~\eqref{eq:Langevin}.}
	\label{tab:comparison}
\end{table}

In the absence of noise, as mentioned earlier, the error between \eqref{eq:Langevin} and \eqref{eq:advection-diffusion} is $\cO(\eps^2)$. This suggests that the weak error estimate \eqref{eq:BrownianEst-upgrade} is optimal. We also note that, when $b\equiv 0$, the error can be computed explicitly thus verifying optimality of both estimates \eqref{ine:strong-est} and \eqref{eq:BrownianEst-upgrade}.
Additionally, we numerically observe the convergence rate
matching those of Theorem~\ref{t:estimate} in Section~\ref{sec:numerics}.

\subsection{General Fast-Slow Systems}
\label{subsec:relevance}
Our system is an instance of a stochastic fast-slow system, which is encountered in many other areas of science. A particularly notable example is in climate science, where the fast variable describes changing weather patterns and the slow variable describes climate change ~\cite{HasselmannStochasticClimate1976, ImkellerVonStorchStochasticClimate2001, MajdaTimofeyevVandenEijndenMathematicalFramework2001}.
It is of interest then to obtain a closed stochastic equation for climate change, which corresponds to our advection-diffusion approximation for the Langevin equation.
Among many such systems, we mention a particular form studied by Bakhtin and Kifer~\cite{BakhtinKiferDiffusionApproximation2004}:
\begin{subequations}
	\label{eq:bakhtin-kifer}
	\begin{equation}
		dX^\eps_t = v(X^\eps_t,Y^\eps_t)\,dt + \sqrt{\eps} u(X^\eps_t,Y^\eps_t)\,dW_t,
	\end{equation}
	\begin{equation}
		dY^\eps_t = \frac{1}{\eps}b(X^\eps_t,Y^\eps_t)\,dt + \sqrt{\frac{1}{\eps}}a(X_t,Y_t)\,dW_t,
	\end{equation}
\end{subequations}
An important problem is determining the error rate of the following diffusion approximation proposed by Hasselmann~\cite{HasselmannStochasticClimate1976}:
\begin{equation}
	\label{eq:Hasselmann}
	d H^\eps_t = \bar B(H^\eps_t) dt + \sqrt{\eps} \sigma(H^\eps_t) dW_t \,.
\end{equation}
Here,
\begin{equation*}
	\bar{B}^j(x) := \int \left(v^j(x,y) + \frac{1}{2}\sum_i\partial_{y^j} u^i(x,y) a_{ij}(x,y)\right)\, d\mu_x(y)
\end{equation*}
where $\mu_x$ is the invariant distribution of the fast system,
and $\sigma$ is a limiting diffusion matrix defined in Theorem 2.1 of \cite{BakhtinKiferDiffusionApproximation2004}.
In the case of \eqref{eq:Langevin}, we have $\sigma=1$ and $\bar{B} = b$.

As before, classical averaging result says that
$$\sup_{0\leq t \leq T} \E|X^\eps_t - \bar X_t | \approx \cO(\sqrt{\eps})\,,$$ where $\bar X_t$ solves
\begin{equation*}
	\frac{d\bar{X}_t}{dt} = \bar{B}(\bar{X}_t)\,.
\end{equation*}

Bakhtin and Kifer \cite{BakhtinKiferDiffusionApproximation2004} showed that~\eqref{eq:Hasselmann} improves this rate to
$$\sup_{0\leq t \leq T}\E| X^\eps_t - H^\eps_t | \approx \cO(\eps^{1/2 + \delta}),$$
where $\delta \in (0,1/2(18+8n))$.
To do so, they introduced a different diffusion approximation
\begin{equation}
	\bar Z^\eps_t = \bar{X}_t + \sqrt{\eps} R_t \,,
	\label{eq:kifer}
\end{equation}
where $R_t$ is a solution of
\begin{equation*}
	d R_t = D_x \bar{B}( \bar{X}_t) R_t \, dt + \sigma(\bar{X}_t)\,dW_t \,.
\end{equation*}
They then showed that, for $T<\infty$ and any $\delta \in (0,1/2(18+8n))$,
\begin{equation*}
	\E^x \sup_{0 \leq t \leq T} \left| X^\eps_t- \bar Z^\eps_t \right| \leq C \eps^{\delta + 1/2} \,,
\end{equation*}
and
\begin{equation*}
	\E^x \sup_{0 \leq t \leq T} \left| H^\eps_t - \bar Z^\eps_t \right| \leq C \eps \,.
\end{equation*}

Finding the optimal $\delta$ was posed as a challenging open problem~\cite{BakhtinKiferDiffusionApproximation2004, kifer2004averaging, KiferStrongLimit2024}.
When restricting the setting to that of the Langevin equation, as a corollary of Theorem~\ref{t:estimate},
we are able to improve Kifer and Bahktin's result
for  \eqref{eq:Langevin} to achieve the optimal rate:
\begin{corollary}
	\label{cor:rate}
  Fix $T>0$ and $p \geq 4$.
	Let $W_t$ be a standard Brownian motion,
	$(\Xal_t, \Val_t)$ be solution to the system of SDE~\eqref{eq:Langevin}
	and
	let $H^\eps_t$ be the solution of~\eqref{eq:Hasselmann}.
	Then, there exist constants $C_T$ and $\eps_0>0$ depending on $T$
	such that for every $\eps<\eps_0$,
	\begin{equation}\label{eq:Hasselmann-rate}
    \sup_{t \in [0,T]} \E \left| \Xal_t - H^\eps_t \right|^p \leq C_T \eps^p \,.
	\end{equation}

	Furthermore, for every $\varphi \in C^\infty(\T^n)$, there exists $C_T > 0$ such that
	\begin{equation}
		\label{eq:weak-Hasselmann}
		\sup_{t \in [0,T]}\left|\E\left(\varphi(\Xal_t) - \varphi(H^\eps_t)\right) \right|
		\leq  C_T \eps\,.
	\end{equation}
\end{corollary}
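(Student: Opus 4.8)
The plan is to obtain Corollary~\ref{cor:rate} directly from Theorem~\ref{t:estimate} by a triangle inequality, after a short noise-free comparison between the two diffusion approximations. In the Langevin regime the Hasselmann SDE \eqref{eq:Hasselmann} specializes to \eqref{eq:Hasselman-simple}, that is $dH^\eps_t = b(H^\eps_t,t)\,dt + \sqrt{2\eps}\,dW_t$, which has exactly the same diffusion coefficient as the acceleration-corrected approximation \eqref{eq:advection-diffusion}; the two drifts differ only by the deterministic $\cO(\eps)$ term $-\eps(\p_t b + D_x b\,b)$. I would couple $H^\eps$ to the same Brownian motion $W_t$ that drives $\Xal$ and $\Zal$, and set $e_t := \Zal_t - H^\eps_t$, so that $e_0 = 0$ and, because the martingale parts cancel, $e_t$ solves the random ODE $\dot e_t = \big(b(\Zal_t,t)-b(H^\eps_t,t)\big) - \eps(\p_t b + D_x b\,b)(\Zal_t,t)$.

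The first step is a Gr\"onwall estimate on $e_t$. Using \eqref{H1} to extract a Lipschitz constant $L$ for $b$ and an $L^\infty$ bound for $\p_t b + D_x b\,b$ on $\T^n\times[0,T]$, the ODE above gives $|e_t| \le L\int_0^t |e_s|\,ds + C_0\,\eps\, t$, and hence, pathwise and deterministically, $\sup_{t\in[0,T]}|e_t| \le C\eps$ with $C$ depending only on $T$ and $\|b\|_{C^2}$. Note this bound is uniform in $\omega$; no stochastic analysis is needed for this piece.

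For the strong estimate I would combine this with \eqref{ine:strong-est}: by $|a-c|^p \le 2^{p-1}(|a-b|^p+|b-c|^p)$ applied to $a=\Xal_t$, $b=\Zal_t$, $c=H^\eps_t$, one gets $\sup_{t\le T}\E|\Xal_t - H^\eps_t|^p \le 2^{p-1}\big(C(1+T)\eps^p + C^p\eps^p\big)$, which is \eqref{eq:Hasselmann-rate}. For the weak estimate, split $\E(\vph(\Xal_t)-\vph(H^\eps_t)) = \E(\vph(\Xal_t)-\vph(\Zal_t)) + \E(\vph(\Zal_t)-\vph(H^\eps_t))$; the first summand is $\cO(\eps^2)$ by \eqref{eq:BrownianEst-upgrade}, while the second is bounded by $\|\grad\vph\|_{C^0}\sup_{t\le T}|e_t| \le C\eps$ since $\vph$ is Lipschitz on $\T^n$. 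Adding the two gives \eqref{eq:weak-Hasselmann}.

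I do not expect a genuine analytic obstacle here: the content is essentially Theorem~\ref{t:estimate} plus an elementary Gr\"onwall argument. The two points deserving care are (i) checking that \eqref{eq:Hasselman-simple} is the correct reduction of \eqref{eq:Hasselmann} in the Langevin regime, so that the diffusion coefficients agree and the noise truly cancels in $e_t$, making the comparison deterministic; and (ii) explaining why the weak rate stays at $\cO(\eps)$ rather than improving to $\cO(\eps^2)$: the acceleration correction is a deterministic drift perturbation of size $\eps$, so it persists under expectation even though it contributes nothing to the martingale part that controls the strong comparison.
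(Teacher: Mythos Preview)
Your proposal is correct and essentially matches the paper's approach: both obtain \eqref{eq:Hasselmann-rate} via Gr\"onwall after observing that the drifts of $\Zal$ and $H^\eps$ differ only by the bounded term $\eps(\partial_t b + D_x b\,b)$ while the noise cancels. For \eqref{eq:weak-Hasselmann} your triangle inequality through $\Zal$ is slightly more black-box than the paper's sketch, which instead points back to the $I+II$ decomposition (Remark~\ref{rem:O2}) and notes that without the acceleration correction the term $II$ degrades from $\cO(\eps^2)$ to $\cO(\eps)$; the content is the same.
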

\begin{remark}
    We stress that the estimate~\eqref{eq:weak-Hasselmann} is of order $\mathcal O(\eps)$ (not $\mathcal O(\eps^2)$ as in~\eqref{eq:BrownianEst-upgrade})
    because it lacks the correction term $\eps (\partial_tb + D_xbb)$ of the drift $b$.
    We refer the reader to Remark~\eqref{rem:O2} for further details.
    As discussed in Section~\ref{subsec:long-time} below, 
    the difference between~\eqref{eq:advection-diffusion} 
    and~\eqref{eq:Hasselmann} is not only
    quantitative but also, more strikingly, qualitative.
\end{remark}

It remains an open question whether our result in the specific case of the Langevin equation \eqref{eq:Langevin} can be extended to the much broader class represented in \eqref{eq:bakhtin-kifer}.
In this context, we mention that there are many other approximations that are not diffusion approximations.
The review~\cite{LucariniChekrounTheoreticalTools2023} provides a good overview of such approximations.
For example, \cite{LiWangXieHigherorderApproximations2023} considers the following approximation of \eqref{eq:bakhtin-kifer}:
\begin{subequations}
	\label{eq:LWX}
	\begin{equation}
		\begin{dcases}
			X^{1,\eps}_t = \bar X_t\,, \\
			dY^{1,\eps}_t = \frac{1}{\eps} b(X^{1,\eps}_t, Y^{1,\eps}_t) \, dt + \sqrt{\frac{2}{\eps}} a(X^{1,\eps}_t, Y^{1,\eps}_t) \, dW_t \,,
		\end{dcases}
	\end{equation}
	and for $k\geq 2$
	\begin{equation}
		\begin{dcases}
			dX^{k,\eps}_t = \bar B(X^\eps_t,t) \, dt + \left(\bar B(X^{k-1,\eps}_t)
			- b(X^{k-1,\eps}, Y^{k-1,\eps}_t)\right) \, dt \,, \\
			dY^{k,\eps}_t = \frac{1}{\eps} b(X^{k,\eps}_t, Y^{k,\eps}_t) \, dt + \sqrt{\frac{2}{\eps}} a(X^{k,\eps}_t, Y^{k,\eps}_t) \, dW_t   \,.
		\end{dcases}
	\end{equation}
\end{subequations}
This hierarchy has the advantage of being able to approximate equation~\eqref{eq:bakhtin-kifer} to an arbitrary order of accuracy, but its use may be challenging from a computational standpoint.

\subsection{Long time behavior}
\label{subsec:long-time}

Let us consider the long time behavior of \eqref{eq:KFP} when the background velocity $b$ is divergence free.
First consider the classical advection diffusion approximation \eqref{eq:naive}. This can be seen as describing the concentration of an inertia-less particle under passive advection and diffusion. The stationary solution $u_*(x)$ of this equation satisfies
\begin{equation}
	\nabla_x\cdot(b u_*)-\eps\Delta_x u_*=0.
\end{equation}
Assuming that $b$ is divergence-free and $x\in \mathbb{T}^n$, we may multiply the above by $u_*$ and integrate by parts in $x$ to immediately see that $u_*(x)\equiv \text{constant}$ is the only steady state. Thus, passive advection and diffusion in a divergence-free vector field cannot lead to a spatially non-uniform stationary concentration field.

However, the stationary distribution of \eqref{eq:KFP} (if it exists) may not necessarily be spatially uniform.
In fact, one of our initial motivations for this study was to explain the recently made experimental observations of non-uniform stationary concentration fields of passive particles under incompressible flow \cite{Guzman-LastraLowenMathijssenActiveCarpets2021}.

A sample Monte-Carlo simulation of the stationary spatial distribution for \eqref{eq:KFP} is given in Figure \ref{fig:longtime-approx}.
Note that the spatial non-uniformity is greater than three-fold between the minimum and maximum of this plot, even when $\varepsilon$ is small.
In fact, even in the limit of $\varepsilon\to 0$, the stationary distribution is not expected to become uniform.
Equation \eqref{eq:naive} will not be able to approximate this stationary distribution given our discussion above.
In the same figure, we plot the stationary distribution of the acceleration corrected approximation \eqref{eq:PDEApprox}, which is seen to provide a good approximation to the stationary distribution of \eqref{eq:KFP}.
This then demonstrates the potential utility of \eqref{eq:PDEApprox} for studying the long-time behavior of \eqref{eq:Langevin}.
This is somewhat surprising given that the difference between \eqref{eq:PDEApprox} and \eqref{eq:naive} is only a term proportional to $\eps$.

\begin{figure}[h!]
	\begin{center}
		\includegraphics[width=0.8\textwidth]{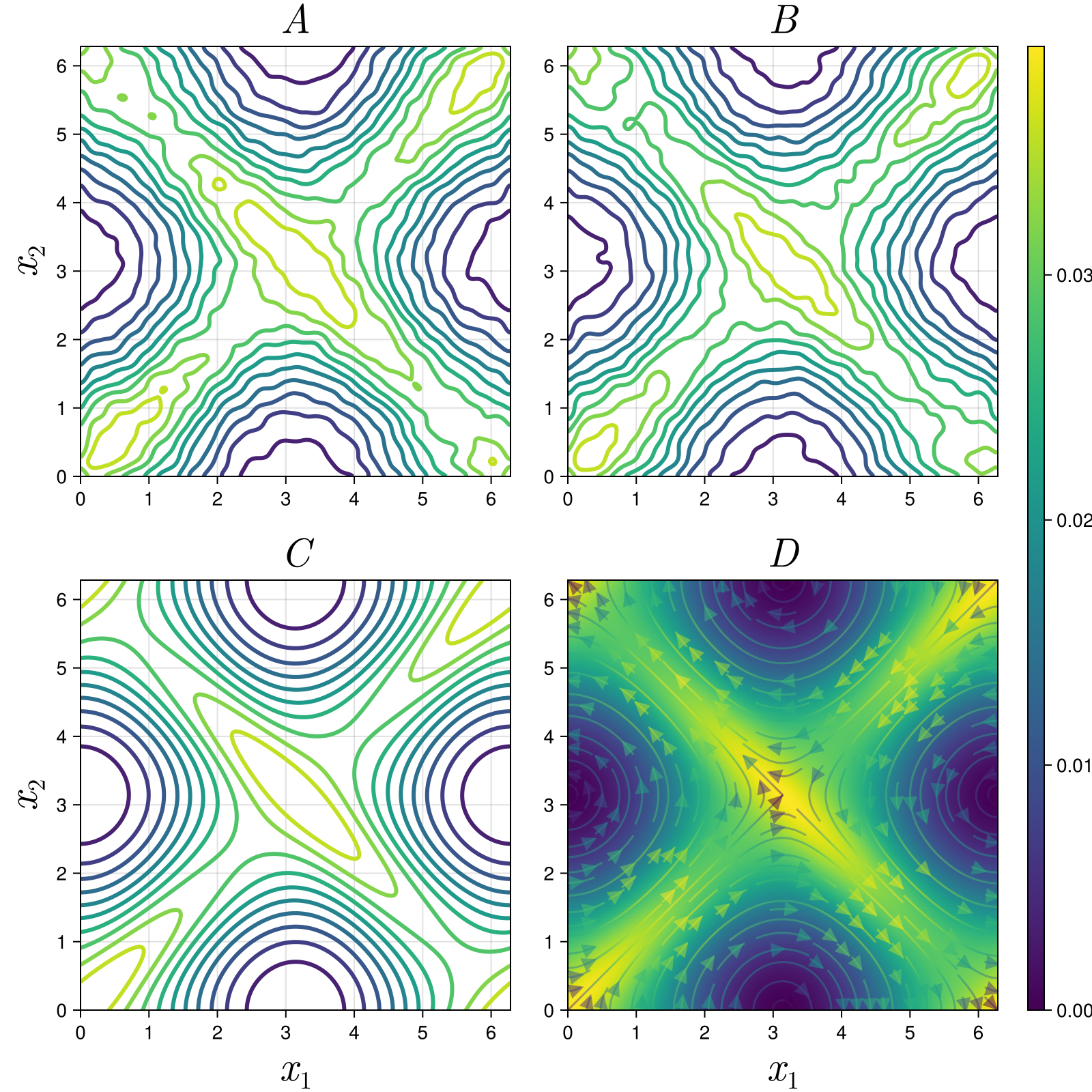}
	\end{center}
	\caption{Long-time behaviors at $T=100$.
		Row 1 displays the contours of the densities from Monte-Carlo simulation and reconstructed from kernel density estimation:
		(A) is the density of $X^\varepsilon_T$ satisfying the Langevin equation~\eqref{eq:Langevin},
		(B) is the density $Z^\varepsilon_T$ satisfying the approximation~\eqref{eq:advection-diffusion}.
		Row 2 displays the stationary solution $u^\varepsilon$ of PDE~\eqref{eq:PDEApprox}.
		(C) is the contour of $u^\eps$.
		(D) is the heatmap of $u^\eps$, together with the background flow. }
	\label{fig:longtime-approx}
\end{figure}

\begin{figure}
	\begin{center}
		\includegraphics[width=0.8\textwidth]{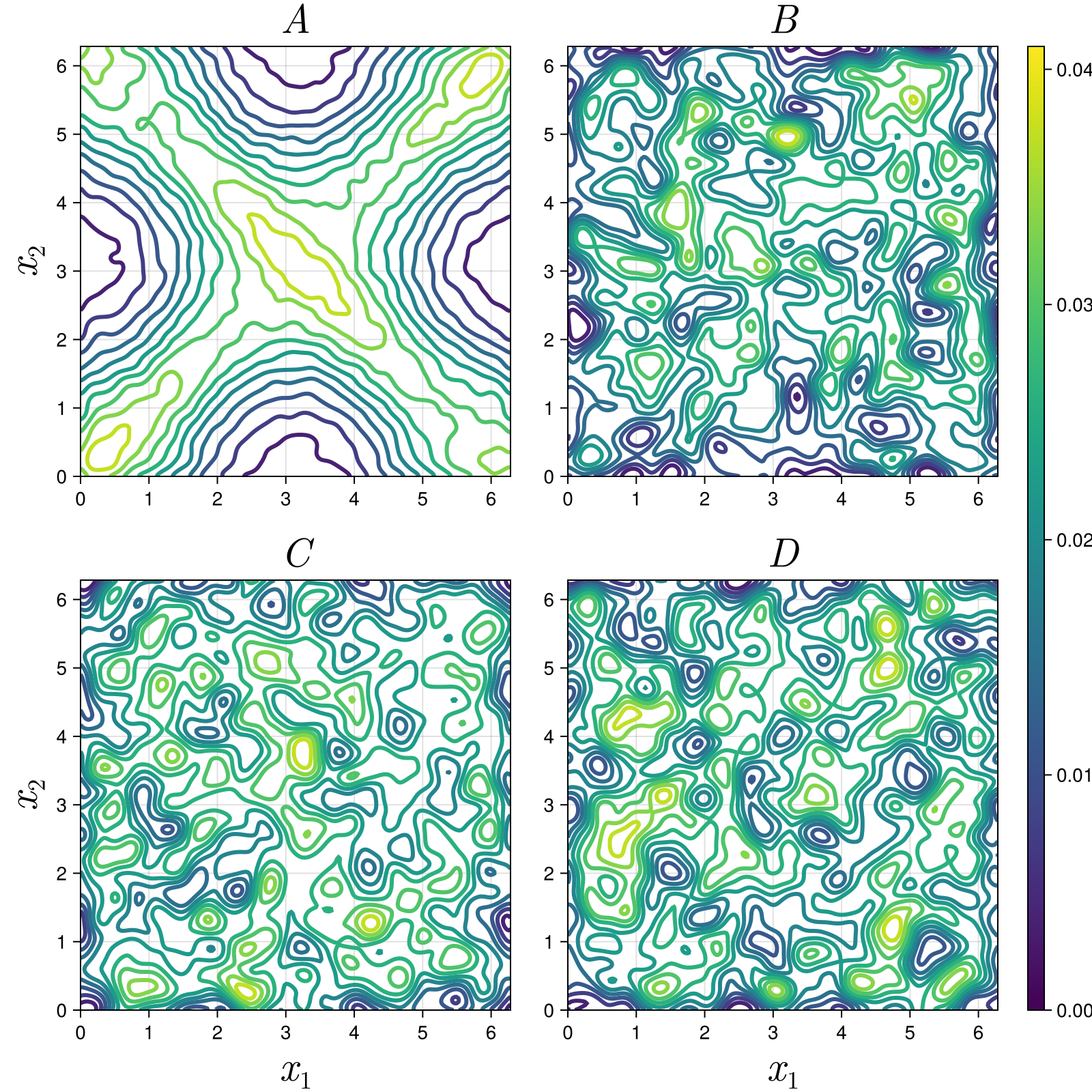}
	\end{center}
	\caption{
		Long time behavior at $T = 100$ for different methods.
		(A) describes the density of~\eqref{eq:advection-diffusion},
		(B) the density of~\eqref{eq:Hasselman-simple},
		(C) the density of~\eqref{eq:bakhtin-kifer},
		(D) the density of~\eqref{eq:LWX}.
	}
	\label{fig:prev_approx}
\end{figure}

An extensive computational or mathematical study of the long time behavior of \eqref{eq:KFP} and that of \eqref{eq:advection-diffusion} is beyond the scope of this paper. We note that equations of the form \eqref{eq:advection-diffusion} where $b$ is divergence free appear in other contexts. Equation \eqref{eq:naive} with a divergence-free vector field in 2D can be seen as the Fokker-Planck equation a stochastically forced Hamiltonian system. Equation \eqref{eq:PDEApprox} is then a non-Hamiltonian perturbation to such a system. The invariant measure of such systems is studied in \cite{WolanskyStochasticPerturbations1988}.

We finally point out that the physical reason for the above spatial non-uniformity is that inertial particles do not exactly follow the flow lines of the divergence free vector field $b$. This was indeed the key observation in \cite{MaxeyGravitationalSettling1987} who, on this basis, predicted that ``particles will tend to accumulate in regions of high strain rate or low vorticity''. This was later confirmed by physical experiment (see for example \cite{PetersenBakerColettiExperimentalStudy2019}).

Thus far, this phenomenon has been studied mathematically using a pathwise approach. In \cite{HallerSapsisWhereInertial2008}, the authors study the deterministic ODE version of \eqref{eq:advection-diffusion} as an approximation to  the deterministic ODE version of \eqref{eq:Langevin}. The dynamical systems point of view taken in these studies was also extended to the stochastic case in \cite{RenDuanJonesApproximationRandom2015}. The study of the stationary distribution can be seen as taking a population-level viewpoint as opposed to a pathwise viewpoint. An extensive study of the stationary distribution and long time behavior will be a subject of future study.

\subsection{Outline of the paper}
In Section \ref{sec:ideas}, we discuss the overall strategy and ideas behind the proof.
In Sections~\ref{sec:limiting},
we   describe some structural properties of the system that may be
quantitatively approximated by It\^o isometry coming from the a typical diffusion process.
The proof of the strong estimate~\eqref{ine:strong-est} of Theorem~\ref{t:estimate} is given in Section~\ref{sec:strong}.
The proof of the weak estimate is more involved and spans Sections~\ref{sec:A-F}-\ref{sec:proof-weak}.
In Section~\ref{sec:simulation}, we numerically verify the estimates proved in Theorem \ref{t:estimate}. We conclude with a preliminary computational study of the long time behavior of \eqref{eq:KFP} and \eqref{eq:advection-diffusion}.

\section{Proof ideas}\label{sec:ideas}

\subsection{Warm-up}
We motivate our discussion by the following simple result for the case $b\equiv 0$ in \eqref{eq:Langevin}.
This is essentially Theorem \ref{t:estimate} when $b\equiv 0$.
\begin{proposition}
	\label{prop:BrownianEst}
	Let $W_t$ be a standard Brownian motion in $\mathbb{R}^n$, $(\tilde Y^\eps_t, \Vtilal_t)$ be solution to the system of SDE
	\begin{subequations}
		\label{eq:bm-approx}
		\begin{align}
			d X^\eps_t
			         & = \Vtilal_t \, dt \,,                                        \\
			d \Vtilal_t
			         & = - \frac{\Vtilal_t}{\eps} dt  + \sqrt{\frac 2 \eps} dW_t\,, \\
			X^\eps_0 & = 0 \,.
		\end{align}
	\end{subequations}
	Let $Z^\eps_t = \sqrt{2 \eps} W_t $ and assume that $\Vtilal_0$ satisfies~\eqref{H3}.
	Then, for every $T>0$ and $\varphi \in C^\infty(\T^n)$, there exists a constant $C$ so that
	\begin{equation}
		\sup_{t\in [0,T]} \E \left| X^\eps_t - Z^\eps_t   \right|^2 \leq C \eps^2
		\label{eq:BrownianEst-strong}
	\end{equation}
	and
	\begin{equation}
		\sup_{t\in [0,T]} \left|\E\left[\varphi(X^\eps_t) - \varphi(Z^\eps_t)\right] \right|
		\leq C \eps^{2}T^2 \,.
		\label{eq:BrownianEst-weak}
	\end{equation}

\end{proposition}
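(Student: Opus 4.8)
\emph{Proof proposal.} The plan is to exploit the fact that for $b\equiv 0$ the difference $\Xal_t-\Zal_t$ has a closed form. Rewriting the velocity equation as $\eps\,d\Vtilal_t=-\Vtilal_t\,dt+\sqrt{2\eps}\,dW_t$ and inserting it into $d\Xal_t=\Vtilal_t\,dt$ gives $d\Xal_t=-\eps\,d\Vtilal_t+\sqrt{2\eps}\,dW_t$; integrating over $[0,t]$ and using $\Xal_0=0$ and $\Zal_t=\sqrt{2\eps}\,W_t$ yields the exact identity
\begin{equation}
	\label{eq:warmup-identity}
	\Xal_t-\Zal_t=\eps\paren{\Vtilal_0-\Vtilal_t}.
\end{equation}
Everything then reduces to estimates on the Ornstein--Uhlenbeck process $\Vtilal_t=e^{-t/\eps}\Vtilal_0+\sqrt{2/\eps}\int_0^t e^{-(t-s)/\eps}\,dW_s$. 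From this formula, the bound $\tfrac{2}{\eps}\int_0^t e^{-2(t-s)/\eps}\,ds=1-e^{-2t/\eps}\le 1$, and the Gaussianity of the stochastic integral (or Burkholder--Davis--Gundy) together with~\eqref{H3}, I would first record the moment bound $\E\abs{\Vtilal_t}^2\le C\paren{1+\E\abs{\Vtilal_0}^2}$, uniformly in $t\ge 0$ and $\eps>0$. The strong estimate~\eqref{eq:BrownianEst-strong} is then immediate from~\eqref{eq:warmup-identity}, since $\E\abs{\Xal_t-\Zal_t}^2=\eps^2\,\E\abs{\Vtilal_0-\Vtilal_t}^2\le 2\eps^2\paren{\E\abs{\Vtilal_0}^2+\sup_{t\ge0}\E\abs{\Vtilal_t}^2}\le C\eps^2$; in fact this holds uniformly in $t\ge 0$, not just on $[0,T]$.

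For the weak estimate~\eqref{eq:BrownianEst-weak}, I would Taylor-expand $\varphi$ (lifted to a $\mathbb Z^n$-periodic function on $\R^n$, so that all of its derivatives are bounded) to second order about $\Zal_t$:
\begin{equation*}
	\varphi(\Xal_t)-\varphi(\Zal_t)=\nabla\varphi(\Zal_t)\cdot(\Xal_t-\Zal_t)+\tfrac12\,(\Xal_t-\Zal_t)^\top D^2\varphi(\zeta^\eps_t)\,(\Xal_t-\Zal_t),
\end{equation*}
for an intermediate point $\zeta^\eps_t$. The quadratic remainder has expectation bounded by $\tfrac12\norm{D^2\varphi}_{C^0}\,\E\abs{\Xal_t-\Zal_t}^2=\cO(\eps^2)$ by the strong estimate. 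For the linear term, \eqref{eq:warmup-identity} gives $\E[\nabla\varphi(\Zal_t)\cdot(\Xal_t-\Zal_t)]=\eps\,\E[\nabla\varphi(\Zal_t)\cdot\Vtilal_0]-\eps\,\E[\nabla\varphi(\Zal_t)\cdot\Vtilal_t]$. The first term vanishes because $\Vtilal_0$ is independent of $W$ (hence of $\Zal_t$) and $\E\Vtilal_0=0$. Writing $\Vtilal_t=e^{-t/\eps}\Vtilal_0+M^\eps_t$ with $M^\eps_t:=\sqrt{2/\eps}\int_0^t e^{-(t-s)/\eps}\,dW_s$, the $\Vtilal_0$-contribution to the second term likewise vanishes, and since $(\Zal_t,M^\eps_t)$ is a centered Gaussian vector with $\mathrm{Cov}(Z^{\eps,i}_t,M^{\eps,j}_t)=2\eps(1-e^{-t/\eps})\,\delta_{ij}$, Gaussian integration by parts (Stein's identity) gives $\E[\nabla\varphi(\Zal_t)\cdot M^\eps_t]=2\eps(1-e^{-t/\eps})\,\E[\Delta\varphi(\Zal_t)]$. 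Hence the linear term equals $-2\eps^2(1-e^{-t/\eps})\,\E[\Delta\varphi(\Zal_t)]$, which is $\cO(\eps^2)$ uniformly in $t$; adding the two contributions proves~\eqref{eq:BrownianEst-weak}.

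The only delicate point is the linear term: its naive bound, via Cauchy--Schwarz and the strong estimate, is merely $\cO(\eps)$, so the $\cO(\eps^2)$ weak rate genuinely depends on the two cancellations above---the vanishing of $\E\Vtilal_0$ combined with the independence of $\Vtilal_0$ and $W$, and the Gaussian integration-by-parts identity, which extracts the extra factor of $\eps$ from $\mathrm{Cov}(\Zal_t,M^\eps_t)=\cO(\eps)$. I expect this to be the crux of the argument, and it is the prototype of the mechanism that forces the acceleration correction $-\eps(\partial_t b+D_x b\,b)$ in~\eqref{eq:advection-diffusion}: for $b\not\equiv 0$ the analogous linear term would fail to be $\cO(\eps^2)$ without it.
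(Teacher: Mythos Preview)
Your proof is correct and takes a genuinely different route from the paper's. Your key identity $\Xal_t-\Zal_t=\eps(\Vtilal_0-\Vtilal_t)$ is equivalent to the paper's expression $\Xal_t-\Zal_t=\eps(1-e^{-t/\eps})\Vtilal_0-\sqrt{2\eps}\,P_t$ (where $P_t=\int_0^t e^{-(t-s)/\eps}\,dW_s$), but packaged more cleanly. For the weak estimate, you Taylor-expand $\varphi(\Xal_t)$ about $\Zal_t$ to second order and dispatch the linear term via Stein's identity, which extracts the extra~$\eps$ from $\mathrm{Cov}(\Zal_t,M^\eps_t)$; the paper instead expands both $\varphi(\Xal_t)$ and $\varphi(\Zal_t)$ about the origin to fourth order and matches the second-order moments directly using the It\^o-isometry identity $\E(P_t\cdot W_t)=\eps n(1-e^{-t/\eps})$. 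Your argument is shorter and in fact only uses the second moment of $\Vtilal_0$, whereas the paper's fourth-order remainder bound needs $\E|\Xal_t|^4=\cO(\eps^2)$ (and the paper further invokes normality of $\Vtilal_0$ to kill the odd moments exactly, though $\cO(\eps^3)$ bounds would suffice). On the other hand, the paper's moment-matching computation serves as a warm-up for Lemma~\ref{lem:approx-1} and the ``almost It\^o isometry'' of Lemma~\ref{lem:diffusion-approx}, which are the workhorses in the general $b\not\equiv 0$ case; Stein's lemma, while elegant here, does not obviously generalize once $\Zal_t$ is no longer Gaussian.
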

Before we prove this result, we will gather a few facts. First, note that the solution to $\Vtilal_t$ is given by:
\begin{equation}\label{Valt}
	\Vtilal_t = V_0 e^{-t/\eps}
	+ \sqrt{\frac{2}{\eps}} \int_0^t e^{-(t-s)/\eps} \, dW_s.
\end{equation}
This motivates us to define:
\begin{equation}\label{Pt}
	P_t = \int_0^t e^{-(t-s)/\eps} \, dW_s.
\end{equation}
Note that $P_t$ satisfies:
\begin{equation}
	\eps \left( dW_t - d P_t  \right) = P_t \, dt \,,
	\label{eq:diff-relation}
\end{equation}
We have the following results on $P_t$.
\begin{lemma}
	The following are true.
	\label{lem:approx-1}
	\begin{equation}
		\E P_s \cdot W_s=\eps n \left(1-e^{-t/\eps}\right).
		\label{eq:PW}
	\end{equation}
	\begin{equation}
		c \eps^{p/2} \leq \E |P_t |^p \leq C \eps^{p/2} \,
		\label{ine:P}
	\end{equation}
	for some $c, C >0$ independent of $p$.
\end{lemma}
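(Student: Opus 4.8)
The key observation is that both $P_t$ from \eqref{Pt} and $W_t$ are It\^o integrals against \emph{deterministic} integrands, hence jointly centered Gaussian, so every quantity in the lemma is computed from the It\^o isometry. For \eqref{eq:PW} I would write $W_t=\int_0^t dW_s$ and $P_t=\int_0^t e^{-(t-s)/\eps}\,dW_s$ and apply the isometry componentwise; since the $n$ coordinates of $W$ are independent, $\E(P_t\cdot W_t)=n\int_0^t e^{-(t-s)/\eps}\,ds=n\eps\paren{1-e^{-t/\eps}}$, which is the claimed identity (the printed statement of \eqref{eq:PW} has an evident typo: $P_s\cdot W_s$ on the left should be $P_t\cdot W_t$).

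For \eqref{ine:P}, the same isometry gives $\E|P_t|^2=n\,\sigma_t^2$ with $\sigma_t^2:=\int_0^t e^{-2(t-s)/\eps}\,ds=\tfrac{\eps}{2}\paren{1-e^{-2t/\eps}}$, and in fact $P_t$ is an $n$-dimensional Gaussian vector with covariance $\sigma_t^2 I_n$. Hence $|P_t|/\sigma_t$ has the $\chi_n$ distribution and $\E|P_t|^p=m_{n,p}\,\sigma_t^p$, where $m_{n,p}=2^{p/2}\Gamma\paren{\tfrac{n+p}{2}}/\Gamma\paren{\tfrac n2}$ is the $p$-th moment of the Euclidean norm of a standard $n$-dimensional Gaussian; alternatively one may simply invoke equivalence of all moments of a Gaussian random variable together with $\E|P_t|^2=n\sigma_t^2$. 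The upper bound in \eqref{ine:P} is then immediate from $\sigma_t^2\le\eps/2$, giving $C=2^{-p/2}m_{n,p}$. For the lower bound I would use that on the time scale relevant to the application — $t$ bounded away from $0$, equivalently $t/\eps$ bounded below — one has $1-e^{-2t/\eps}\ge c_0>0$, hence $\sigma_t^2\ge c_0\eps/2$ and $\E|P_t|^p\ge(c_0/2)^{p/2}m_{n,p}\,\eps^{p/2}$, so $c=(c_0/2)^{p/2}m_{n,p}$.

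The computation is routine; the only point that needs care is the quantifier in the lower bound. Since $P_0=0$, no bound of the form $\E|P_t|^p\ge c\,\eps^{p/2}$ can hold uniformly down to $t=0$, so I would state \eqref{ine:P} either for $t$ ranging over a fixed interval $[t_0,T]$ with $t_0>0$ or for a fixed $t>0$ and $\eps$ small; I expect this matches how it is used downstream. Likewise, the constants $c,C$ inevitably depend on $n$ and $p$ through $m_{n,p}$, so the phrase ``independent of $p$'' should be read as ``for each fixed $p$.'' Beyond these bookkeeping remarks there is no genuine obstacle: the lemma is a direct consequence of the Gaussianity of $P_t$ and the It\^o isometry.
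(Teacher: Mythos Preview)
Your proposal is correct and follows essentially the same route as the paper: for \eqref{eq:PW} the paper applies the It\^o isometry componentwise exactly as you do, and for \eqref{ine:P} it likewise identifies $P_t\sim\sigma_t Z$ with $Z\sim\mathcal{N}(0,I_n)$, $\sigma_t^2=\tfrac{\eps}{2}(1-e^{-2t/\eps})$, and computes $\E|P_t|^p=\eps^{p/2}(1-e^{-2t/\eps})^{p/2}\,\Gamma\!\paren{\tfrac{n+p}{2}}/\Gamma\!\paren{\tfrac{n}{2}}$ via the $\chi$ distribution. Your caveats about the lower bound failing at $t=0$ and the constants necessarily depending on $p$ (and $n$) are well taken; the paper simply writes ``from which \eqref{ine:P} follows'' and does not comment on either point.
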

\begin{proof}
	To see~\eqref{eq:PW}, compute the expectation of $P_t\cdot W_t$ using It\^o isometry:
	\begin{equation*}
		\E \left(P_t\cdot W_t\right) =\sum_{i=1}^n \E \paren{P_t^iW_t^i}= n\int_0^t e^{-(t-s)/\eps}\,ds = \eps n \left(1-e^{-t/\eps}\right).
	\end{equation*}

	To see~\eqref{ine:P}, we note that $P_t$ is a Guassian process which satisfies
	\begin{equation*}
		P_t \sim \sigma_t Z \,,
	\end{equation*}
	where
	\begin{equation*}
		Z \sim \mathcal{N} \left( 0,  \mathrm{Id}  \right) \,,
		\qquad \sigma_t^2 = \frac{\eps}{2} \left( 1 -e^{-2t/\eps}  \right) \,,
	\end{equation*}
	and $\mathrm{Id}$ is the $n\times n$ identity matrix.
	Since $| Z |$ obeys $\chi$ distribution, we have
	\begin{equation*}
		\E | P_t |^p = \sigma_t^p \E | Z |^p
		= \sigma_t^p 2^{p/2} \frac{ \Gamma\left( \frac12(n +p)  \right)}{\Gamma\left( \frac12 n \right)}
		= \eps^{p/2} \left(1 - e^{-2t/\eps}  \right)^{p/2} \frac{ \Gamma\left( \frac12(n +p)  \right)}{\Gamma\left( \frac12 n \right)}\,,
	\end{equation*}
	from which~\eqref{ine:P} follows.
\end{proof}

Let us now return to the proof of Proposition \ref{prop:BrownianEst}.
\begin{proof}[Proof of strong estimate in Proposition \ref{prop:BrownianEst}]
	The velocity $\Vtilal_t$ in equation~\eqref{eq:bm-approx} is given by \eqref{Valt}, which together with \eqref{Pt} and \eqref{eq:diff-relation} yields:
	\begin{equation}
		\Vtilal_t \, dt =V_0 e^{-t/\eps} \, dt +\sqrt{\frac{2}{\varepsilon}}P_t \, dt =V_0 e^{-t/\eps} \, dt +\sqrt{2\varepsilon}(dW_t-dP_t).
	\end{equation}
	Integrating this once, we find that $X^\eps_t$ in \eqref{eq:bm-approx} is given by:
	\begin{equation}
		X^\eps_t =
		\eps(1 - e^{-t/\eps}) V_0 +\sqrt{2\varepsilon}\paren{W_t-P_t}.
		\label{eq:pathwise-solution-Y}
	\end{equation}
	Subtract $Z^\eps_t=\sqrt{2\eps} W_t$ from the above, we have
	\begin{equation*}
		X^\eps_t - Z^\eps_t
		=
		\eps(1 - e^{-t/\eps}) V_0-\sqrt{2\varepsilon}P_t.
	\end{equation*}
	Using the independence of $V_0$ and $P_t$ and \eqref{ine:P},
	\begin{equation*}
		\E \abs{X^\eps_t-Z^\eps_t}^2\leq \varepsilon^2 \E \abs{V_0}^2+2\varepsilon\E\abs{P_t}^2\leq C\varepsilon^2
	\end{equation*}
	for some positive constant $C$.
	This gives the inequality~\eqref{eq:BrownianEst-strong}.
\end{proof}

\begin{proof}[Proof of weak estimate in Proposition \ref{prop:BrownianEst}]
	By Taylor's theorem, we have
	\begin{align*}
		\varphi(Z^\eps_t )
		= & \varphi(0) + \partial_i \varphi(0) Z^{\eps,i}_t + \frac{1}{2} \partial_{ij}^2 \varphi(0) Z^{\eps,i}_t Z^{\eps,j}_t+ \frac{1}{3!} \partial_{ijk}^3 \varphi (0) Z^{\eps,i}_t Z^{\eps,j}_t Z^{\eps,k}_t+R_Z \\
		= & \varphi(0) + \partial_i \varphi(0) \sqrt{2\eps} W^{i}_t
		+ \eps  \partial_{ij}^2 \varphi(0) W^i_t W^j_t +\frac{ (2\eps)^{3/2}}{3!} \partial_{ijk}^3 \varphi (0) W^{i}_t W^{j}_t W^{k}_t
		+ R_Z,                                                                                                                                                                                                       \\
	\end{align*}
	where
	\begin{equation*}
		\abs{R_Z}\leq C\abs{Z_t^\varepsilon}^4=4\varepsilon^2 C\abs{W_t}^4  \text{ for some } C>0,
	\end{equation*}
	and $\partial_i$ etc. refer to partial derivatives with the $i$-th coordinate and the summation convention for repeated indices is in effect.
	Taking the expectation in the above, we see that:
	\begin{equation}\label{phiZ1}
		\E \abs{\varphi(Z^\eps_t )- \eps  \partial_{ij}^2 \varphi(0) W^i_t W^j_t}\leq \mc{O}(\eps^2).
	\end{equation}
	Once again, by Taylor's theorem and \eqref{eq:pathwise-solution-Y},
	\begin{align*}
		\varphi(X^\eps_t )
		          & =\varphi(0) + \partial_i \varphi(0) X^{\eps,i}_t
		+ \frac{1}{2} \partial_{ij}^2 \varphi(0) X^{\eps,i}_t X^{\eps,j}_t+ \frac{1}{3!} \partial_{ijk}^3 \varphi (0) X^{\eps,i}_t X^{\eps,j}_t X^{\eps,k}_t
		+ R_X,                                                       \\
		\abs{R_X} & \leq C\abs{X_t^\eps}^4 \text{ for some } C>0.
	\end{align*}
	Using \eqref{eq:pathwise-solution-Y}, given that $V_0$ is normally distributed, we have:
	\begin{equation}
		\E X^{\eps,i}_t=\E \paren{X^{\eps,i}_t X^{\eps,j}_t X^{\eps,k}_t}=0.
	\end{equation}
	Note that
	\begin{equation}
		\begin{split}
			X^{\eps,i}_t X^{\eps,j}_t= & \eps^2(1 - e^{-t/\eps})^2 V_0^iV_0^j +\sqrt{2\varepsilon}\varepsilon(1-e^{-t/\eps})V_0^i \paren{W_t^j-P_t^j}         \\
			                           & +\sqrt{2\varepsilon}\varepsilon(1-e^{-t/\eps})V_0^j \paren{W_t^i-P_t^i}+2\eps\paren{W_t^i-P_t^i}\paren{W_t^j-P_t^j}.
		\end{split}
	\end{equation}
	Therefore, we see from identities~\eqref{ine:P} and~\eqref{eq:PW} in Lemma \ref{lem:approx-1} that:
	\begin{equation}
		\left| \E (X^{\eps,i}_t X^{\eps,j}_t-2\varepsilon W_t^iW_t^j) \right|= C\varepsilon^2.
	\end{equation}
	Finally, using~\eqref{ine:P}, we have
	\begin{equation}
		\E \abs{X_t^\eps}^4 \leq C \E \eps^4|V_0|^4 + C \eps^2 \E (|W_t|^4 + |P_t|^4)
		\leq C\eps^2 t^2
	\end{equation}
	for some $C>0$.
	Combining the above, we have:
	\begin{equation}
		\left| \E\paren{\varphi(X^\eps_t )-\eps  \partial_{ij}^2 \varphi(0) W^i_t W^j_t}\right| \leq C\eps^2 t^2.
	\end{equation}
	Combining \eqref{phiZ1} with the above, we obtain estimate \eqref{eq:BrownianEst-weak}.
\end{proof}

\subsection{ Heuristics and Strong Estimate }
\label{subsec:derivation}
First, note that if $(\Xal_t, \Val_t)$ is a solution of~\eqref{eq:Langevin}, then
\begin{equation}
	\Val_t = V_0 e^{-t/\eps} + \frac{1}{\eps} \int_0^t e^{-(t-s)/\eps} b(\Xal_s,s) \, ds
	+ \sqrt{\frac{2}{\eps}} \int_0^t e^{-(t-s)/\eps} \, dW_s \,,
	\label{eq:Vsol}
\end{equation}
and
\begin{align}
	\Xal_t
	 & = \Xal_0 + \int_0^t \Val_s \, ds  \nonumber                                                                       \\
	 & = \Xal_0 +
	\int_0^t \left(  e^{-s/\eps} V_0 + \frac{1}{\eps}\int_0^s  e^{-(s-r)/\eps} b(X^\eps_r, r) \, dr \right) ds \nonumber \\
	 & \qquad + \sqrt{\frac{2}{\eps}} \int_0^t\int_0^s e^{-(s-r)/\eps} \, dW_r  \, ds \,.
	\label{eq:Langevin-solution}
\end{align}

We may then rewrite $\Xal_t$ to satisfy
\begin{subequations}
	\label{eq:rewrite}
	\begin{align}
		d\Xal_t    & =  \left( A_t + \Vtilal_t  \right) dt\,,                       \\
		d\Vtilal_t & = -\frac{\Vtilal_t}{\eps} \, dt + \sqrt{\frac 2 \eps} dW_t \,,
	\end{align}
	where
	\begin{equation*}
		A_t \defeq  b_0 e^{-t/\eps} + \frac{1}{\eps} \int_0^t e^{-(t-s)/\eps} b(\Xal_s,s) \, ds \,,
	\end{equation*}
	and
	\begin{equation}
		X_0 = x \in \T^n, \qquad \text{ $\Vtilal_0$ satisfies~\eqref{H3}} \,.
	\end{equation}
\end{subequations}
Notice now that, by integration by parts,
one may see that if we replace $X_t$ by a smooth function $x(t)$,
\begin{align*}
	A_t & = b(x(t),t) - \int_0^t e^{-(t-s)/\eps} \left(\partial_t b(x(s),s) + D b(x(s),s)\, \dot x(s) \right)  \, ds \\
	    & \approx b(x(t),t) - \eps\left(\partial_t b(x(t),t) + D b(x(t),t)\, \dot x(t) \right) \,.
\end{align*}

On the other hand, heuristically, one may guess by computing the quadratic variation that
\begin{equation}
	\label{eq:Vtilal}
	\begin{aligned}
		\sqrt{\eps} \Vtilal_t \, dt
		 & =  \sqrt{\eps} \Vtilal_0 e^{-t/\eps} dt + \sqrt{2} \int_0^t e^{-(t-s)/\eps} \, dW_s dt \\
		 & \approx \sqrt{\eps} \, d W_t \,.
	\end{aligned}
\end{equation}

The main result of this work is to show that this approximation gives
the error rates similar to those in Proposition~\ref{prop:BrownianEst}.
The guiding principle of the proof of the weak error estimate is based on the identity~\eqref{eq:diff-relation},
which we recall here for convenience
\begin{equation*}
	\eps \left( dW_t - d P_t  \right) = P_t \, dt \,.
\end{equation*}
This identity says that for any adapted process $G_t$,
$\int_0^t G_s  P_s \,ds$
is almost like a martingale, with a slight error.
In a sense, our main goal is to find a sharp quantification of this error as $\eps \to 0$.
By studying this carefully (see Sections~\ref{sec:limiting} and~\ref{sec:strong}),
we can see that
\begin{equation*}
	\E \abs{A_t - F(\Xal_t,t)}^p \approx \cO(\eps^p) \,.
\end{equation*}

\subsection{Weak estimate}
\label{subsec:weak-est}
The weak error estimate~\eqref{eq:BrownianEst-upgrade} is more delicate.
We start out with a standard approach of studying weak convergence of diffusion processes
via the backward Kolmogorov equation~\cite{HighamKloedenIntroductionNumerical2021}:
\step
Let $\varphi$ be a smooth function and $u^\eps$ be the solution to the following equation
\begin{subequations}
	\label{eq:PDE}
	\begin{equation}
		\partial_t u^\varepsilon +  F(x,t) \cdot \grad_x u^\varepsilon + \eps \Delta_x u^\varepsilon = 0 \,,
	\end{equation}
	with terminal data
	\begin{equation}
		u^\eps(\cdot,T) = \varphi \,.
	\end{equation}
\end{subequations}

Note that because $b \in C^\infty(\T^n \times [0, \infty))$, it is true that $F \in C^\infty(\T^n \times [0,\infty))$.
By regularity theory for Fokker-Planck equation (see~\cite[Theorem 3.2.4]{stroockMultidimensionalDiffusionProcesses2006} for example),
we have for each $k\in \mathbb{N}$, there exist a constant $C=C_T>0$, such that
\begin{equation}
	\label{eq:unibound}
	\sup_{\eps\in [0,1]} \sup_{0\leq t \leq T}\norm{ u^\eps(\cdot,t) }_{C^k} \leq C \,.
\end{equation}

\step
It follows from the definition \eqref{eq:PDE} that
\begin{equation*}
	u^\eps (\Xal_T,T)  =  \varphi(\Xal_T)
\end{equation*}
and,
after applying It\^o's formula to $u(Z^\varepsilon_t,t)$,
\begin{equation*}
	u^\eps (x,0) = \E(\varphi(\Zal_T)) \,.
\end{equation*}

We recall that, since we assume $V^\varepsilon_0 = b_0 + \tilde{V}^\varepsilon_0$,
we have
\begin{equation*}
	dX^\varepsilon_t = V^\varepsilon_t\,dt
	= \left(A_t + \tilde{V}^\varepsilon_t\right)\,dt
\end{equation*}
Therefore,
\begin{align}
	 & u^\eps (\Xal_T, T) -  u^\eps (x,0)                                                                      \nonumber         \\
	 & = \int_0^T \partial_t u(X^\varepsilon_t,t)
	+ \sum_{i=1}^n\int_0^T \partial_{x^i} u^\eps(X^\varepsilon_t,t)\,dX^{\varepsilon,i}_t                        \nonumber       \\
	 & =
	\int_0^T \partial_t  u^\eps ( \Xal_t, t) + (A_t + \Vtilal_t) \cdot \grad_x  u^\eps (\Xal_t, t) \, dt        \nonumber        \\
	 & =  \int_0^T \left( \Vtilal_t \cdot \grad_x  u^\eps (\Xal_t,t) - \eps \Delta_x  u^\eps (\Xal_t, t) \right) \, dt \nonumber \\
	 & \qquad + \int_0^T \left( A_t - F(\Xal_t,t)   \right) \cdot \grad_x  u^\eps(\Xal_t,t)\, dt  \nonumber                      \\
	 & = I + II
	\label{eq:I-II}
\end{align}
where we use that $\partial_t u^\eps = -F \cdot \nabla_x u^\eps - \varepsilon \Delta_x u^\eps$
in the third equality.

As before, the $A-F$ term should be small. The averaging effect  upgrades this difference to $\cO(\eps^2)$.
On the other hand, a notable structure that arises from our analysis is that
\begin{equation}
	\E  \int_0^T \left( \tilde V^\eps_t \cdot \grad_x  u^\eps (\Xal_t,t) - \eps \Delta_x  u^\eps (\Xal_t, t)\right)  dt  = \cO(\eps^2)\,.
	\label{ine:weak-I}
\end{equation}

The main difficulty, is to quantify the difference between
$\sqrt{2/\eps}\int_0^t Q(\Xal_s,s) P_s \, ds$
and the martingale $\sqrt{2\eps} \int_0^t Q(\Xal_s,s) dW_s$
for some function $Q:\T^n \times [0,\infty) \to \R$ with sufficient regularity.
Specifically, we want to study
\begin{equation*}
	\E  \int_0^t Q(\Xal_s,s) P_s \, ds \,.
\end{equation*}
Inspired by the proof of Proposition~\ref{prop:BrownianEst}, we exploit the observation that
since $Q(\E \Xal_s,s)$ is deterministic and $\E P_t = 0$,
\begin{align*}
	 & \E  \int_0^t Q(\Xal_s,s) P_s \, ds
	=
	\E  \int_0^t \left(Q(\Xal_s,s)  - Q(\E \Xal_s, s) \right)P_s\, ds                        \\
	 & = \E  \int_0^t \left(\Xal_s  - \E \Xal_s \right)\cdot\grad_x Q (f(s),s)  P_s\, ds \,,
\end{align*}
where $f(s) = \lambda_s\Xal_s+ (1- \lambda_s)\E \Xal_s$ for some $ \lambda_s\in[0,1]$.
Obtaining estimates for quantities similar to the RHS above is the crux of our proof.

\section{Limiting Langevin Calculus}
\label{sec:limiting}

As discussed above,
for an adapted process $G_t$, $\int_0^t G_s \cdot P_s \, ds$ behaves like a martingale with a slight error.
Therefore, while doing integration against $P_t$, one may be able to think in terms of It\^o isometry.
We will quantitatively uncover what this means in this section.

In what follows, it is understood that the underlying probability space is $(\Omega, \cF, \P)$.

\begin{lemma}
	\label{lem:martingale-approx}
	Let $T>0$,  $G:\Omega \times  [0,\infty) \to \R^{n}$ and  $H\in \Omega \times [0,\infty) \mapsto \mathrm{Sym}_n(\R)$ be processes
	that $G(\cdot,t), Q(\cdot,t)$ are $\cF_t$-adapted.
	Suppose that  $G^i(\omega,t), H^{ij}(\omega, \cdot) \in C^1([0,\infty);\R) $ and there are positive constant $C_0$, $C_1$ and $C_2$ such that
	$$ | H^{ij} | \leq C_0 \,, $$
	$$\E (H^{ij}_0)^2 + \E(G_0^i)^2 \leq C_1^2 \,,$$
	and
	$$\displaystyle \E  \int_0^T \abs{H^{ij\prime}_t}^2dt  + \E  \int_0^T \abs{G^{i\prime}_t}^2dt   \leq C_2^2$$ uniformly for $i,j$. Let
	$$J_t = \int_0^t e^{-(t-s)/\eps} H_s  dW_s \,.$$
	Then
	\begin{align}
		\left| \E  \int_0^T G(\cdot,  t)
		\cdot \sqrt{\frac{2}{\eps}} J_t \, dt \right|
		\leq C(1 + \sqrt{T})\eps
		\label{eq:EG-estimate}
	\end{align}
	and, consequently,
	\begin{align}
		\left| \E  \int_0^T G(\cdot,  t)
		\cdot \Vtilal_t \, dt \right|
		\leq C(1 + \sqrt{T})\eps
		\label{eq:EGV-estimate}
	\end{align}
	for some constant $C$ that depends only on $C_1,C_2$.
\end{lemma}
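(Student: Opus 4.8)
The plan is to exploit the fact that $J_t$ is the matrix-weighted analogue of the process $P_t$ from \eqref{Pt}, and hence obeys the matrix version of the identity \eqref{eq:diff-relation}. Differentiating the definition of $J_t$ gives
\begin{equation*}
	dJ_t = -\frac{1}{\eps}J_t\,dt + H_t\,dW_t,
	\qquad\text{i.e.}\qquad
	\sqrt{\frac{2}{\eps}}\,J_t\,dt = \sqrt{2\eps}\,H_t\,dW_t - \sqrt{2\eps}\,dJ_t.
\end{equation*}
Pairing with $G_t$ and integrating, the quantity to be estimated becomes
\begin{equation*}
	\int_0^T G_t\cdot\sqrt{\frac{2}{\eps}}\,J_t\,dt
	= \sqrt{2\eps}\int_0^T (H_tG_t)\cdot dW_t \;-\; \sqrt{2\eps}\int_0^T G_t\cdot dJ_t.
\end{equation*}
Since $G^i_t = G^i_0 + \int_0^t (G^i)'_s\,ds$, the hypotheses on $G_0$ and $G'$ give $\E\int_0^T|G_t|^2\,dt \le C(1+T)^2$, and because $|H|\le C_0$ this makes $H_tG_t$ square-integrable on $[0,T]$, so the first integral on the right is a genuine martingale and has zero expectation. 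For the second, $G$ has finite-variation paths, so Itô's product rule carries no bracket term and, using $J_0=0$, $\int_0^T G_t\cdot dJ_t = G_T\cdot J_T - \int_0^T G'_t\cdot J_t\,dt$. Hence
\begin{equation*}
	\E\int_0^T G_t\cdot\sqrt{\frac{2}{\eps}}\,J_t\,dt
	= -\sqrt{2\eps}\,\E\big(G_T\cdot J_T\big) + \sqrt{2\eps}\,\E\int_0^T G'_t\cdot J_t\,dt.
\end{equation*}

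Next I would show both terms on the right are $\cO(\eps(1+\sqrt T))$. The key quantitative input is the $\sqrt\eps$-smallness of $J_t$ in $L^2$: by It\^o isometry and $|H|\le C_0$,
\begin{equation*}
	\E|J^i_t|^2 = \int_0^t e^{-2(t-s)/\eps}\,\E\big|(H_s)_{i\cdot}\big|^2\,ds \le \frac{nC_0^2}{2}\,\eps
\end{equation*}
uniformly in $t\in[0,T]$. Combined with $\sup_{t\le T}\E|G_t|^2 \le C(1+T)$ (from the bounds on $G_0,G'$) and Cauchy--Schwarz, the boundary term obeys $\sqrt{2\eps}\,|\E(G_T\cdot J_T)| \le C\eps(1+\sqrt T)$; combined instead with $\int_0^T\E|G'_t|\,dt \le \sqrt T\,(\E\int_0^T|G'_t|^2\,dt)^{1/2} \le C\sqrt T$ it gives $\sqrt{2\eps}\,|\E\int_0^T G'_t\cdot J_t\,dt| \le C\eps\sqrt T$. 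Adding these proves \eqref{eq:EG-estimate}, with $C$ depending only on $n,C_0,C_1,C_2$.

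Finally, \eqref{eq:EGV-estimate} follows by specialization. Solving $d\Vtilal_t = -\Vtilal_t/\eps\,dt + \sqrt{2/\eps}\,dW_t$ gives $\Vtilal_t = e^{-t/\eps}\Vtilal_0 + \sqrt{2/\eps}\,P_t$, and $P_t$ is exactly $J_t$ for $H\equiv\mathrm{Id}$, which meets the hypotheses with $C_0=1$. Thus $\E\int_0^T G_t\cdot\Vtilal_t\,dt$ splits into $\E\int_0^T e^{-t/\eps}G_t\cdot\Vtilal_0\,dt$, which by Cauchy--Schwarz is at most $(\E|\Vtilal_0|^2)^{1/2}\,\sup_{t\le T}(\E|G_t|^2)^{1/2}\int_0^T e^{-t/\eps}\,dt \le C\eps(1+\sqrt T)$, plus a term bounded by \eqref{eq:EG-estimate}.

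I expect the only genuine subtlety to be the opening move --- recognizing that $J_t$ satisfies the matrix analogue of \eqref{eq:diff-relation}, which is precisely what converts the singular prefactor $\eps^{-1/2}$ into the gain $\eps^{+1/2}$ after integration by parts. Everything downstream is It\^o isometry and Cauchy--Schwarz bookkeeping, together with the routine check that the martingale term is square-integrable enough to have mean zero; the stated hypotheses on $G$ and $H$ are comfortably more than sufficient for all of this.
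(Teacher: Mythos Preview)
Your proposal is correct and follows essentially the same approach as the paper: rewrite $\sqrt{2/\eps}\,J_t\,dt$ via the differential identity $dJ_t=-\eps^{-1}J_t\,dt+H_t\,dW_t$, drop the martingale term in expectation, integrate $\int G_t\cdot dJ_t$ by parts, and bound the resulting boundary and bulk terms with It\^o isometry plus Cauchy--Schwarz; the derivation of \eqref{eq:EGV-estimate} by splitting off $e^{-t/\eps}\Vtilal_0$ is likewise identical. Your write-up is in fact slightly more careful than the paper's in justifying square-integrability of the stochastic integral and noting that the product-rule correction vanishes because $G$ has finite variation.
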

\begin{remark}
	When $H = \mathrm{Id}$, \eqref{eq:EG-estimate} becomes
	\begin{align}
		\left| \E  \int_0^T G(\cdot,  t)
		\cdot \sqrt{\frac{2}{\eps}} P_t  \, dt\right|
		\leq C(1 + \sqrt{T})\eps \,.
		\label{eq:EG-estimate-2}
	\end{align}
\end{remark}

\begin{lemma}
	\label{lem:diffusion-approx}
	Let $\mathrm{Sym}_n(\mathbb{R})$ be the set of $n\times n$ symmetric matrices.
	Let $T>0$ and $Q\in \Omega \times [0,\infty) \mapsto \mathrm{Sym}_n(\R)$ be a process that is $\cF_t$-adapted, where $Q^{ij}$ are its matrix elements.
	Suppose that  $Q^{ij}(\omega, \cdot) \in C^1([0,\infty);\R) $ and there are positive constant $C_1$ and $C_2$ such that
	$\E (Q^{ij}_0)^2\leq C_1^2$ and $\displaystyle \E  \int_0^T \abs{Q^{ij\prime}_t}^2dt   \leq C_2^2$ uniformly for $i,j$.
	Then there exists a constant $C >0$ that depends only on $C_1,C_2$  such that
	\begin{align}
		\E  \int_0^T \sum_{i,j = 1}^n P^i_t Q^{ij}_t  P^j_t \, dt
		= \frac{\eps}2\E \int_0^T \sum_{i=1}^n Q^{ii}_t \, dt + \cR
		\label{eq:diffusion-approx}
	\end{align}
	where
	\begin{equation}
		|\cR| \leq C(1 + \sqrt{T})\eps^{2}.
	\end{equation}
\end{lemma}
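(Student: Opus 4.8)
The plan is to expand the product $P^i_t Q^{ij}_t P^j_t$ using the stochastic representation of $P_t$ and isolate the leading-order contribution, which comes from the It\^o isometry of the Gaussian process $P_t$. Recall $P^i_t = \int_0^t e^{-(t-s)/\eps}\,dW^i_s$, so that for fixed $t$ the vector $P_t$ has independent coordinates each with variance $\sigma_t^2 = \tfrac\eps2(1-e^{-2t/\eps})$. If $Q_t$ were deterministic and constant in $t$, then $\E\sum_{i,j} P^i_t Q^{ij}_t P^j_t = \sigma_t^2 \sum_i Q^{ii}$, and since $\int_0^T \sigma_t^2\,dt = \tfrac\eps2\int_0^T(1-e^{-2t/\eps})\,dt = \tfrac\eps2(T - \tfrac\eps2(1-e^{-2T/\eps}))$, the $e^{-2t/\eps}$ correction contributes only $\cO(\eps^2)$, which is absorbed into $\cR$. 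So the whole task is to handle the adaptedness and time-dependence of $Q_t$.

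First I would write, for each fixed $t$,
\begin{equation*}
	Q^{ij}_t = Q^{ij}_0 + \int_0^t Q^{ij\prime}_r\,dr,
\end{equation*}
and split the integrand accordingly. For the $Q_0$ piece, since $Q_0$ is $\cF_0$-measurable and $P_t$ is the increment-driven Gaussian independent of $\cF_0$ conditionally... actually more carefully: $P^i_t P^j_t - \delta_{ij}\sigma_t^2$ is a sum of off-diagonal products and compensated squares of Wiener integrals; conditioning on $\cF_0$ and using $\E[P^i_tP^j_t\mid\cF_0] = \delta_{ij}\sigma_t^2$ gives $\E[Q^{ij}_0 P^i_t P^j_t] = \delta_{ij}\sigma_t^2\,\E Q^{ii}_0$, and integrating in $t$ produces $\tfrac\eps2\E\int_0^T\sum_i Q^{ii}_0\,dt$ plus an $\cO(\eps^2)$ term from the exponential. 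For the $Q'$ piece, I need to control
\begin{equation*}
	\E\int_0^T\int_0^t Q^{ij\prime}_r\,\bigl(P^i_t P^j_t\bigr)\,dr\,dt,
\end{equation*}
and compare it with $\delta_{ij}\sigma_t^2$ replaced appropriately. The key is that $Q^{ij\prime}_r$ is $\cF_r$-adapted while $P^i_tP^j_t$ depends on the Brownian path on $[0,t]$; splitting $P^i_t = e^{-(t-r)/\eps}P^i_r + \int_r^t e^{-(t-s)/\eps}\,dW^i_s$ separates an $\cF_r$-measurable part (handled by conditioning, giving the diagonal $\sigma$-type contribution, with $\int_0^t dr$ of it again producing the claimed main term modulo $\cO(\eps^2)$) from a part involving increments after time $r$, which is independent of $Q^{ij\prime}_r$ and whose cross terms vanish in expectation. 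The residual terms are then bounded by Cauchy--Schwarz using $\E\int_0^T|Q^{ij\prime}_r|^2\,dr \le C_2^2$, together with moment bounds on $P_t$ from \eqref{ine:P} (which give $\E|P_t|^4 \le C\eps^2$) and the fact that the exponential kernel $\int_r^t e^{-(t-s)/\eps}\,ds \le \eps$ contributes the extra power of $\eps$; one also uses $\E(Q^{ij}_0)^2\le C_1^2$ to start the recursion. Assembling, $\cR$ collects: the $e^{-2t/\eps}$ corrections ($\cO(\eps^2)$ since $\int_0^\infty e^{-2t/\eps}dt = \eps/2$), and the cross/residual terms ($\cO(\eps^2)$ after Cauchy--Schwarz against the $C_2$ bound), with an additional factor $(1+\sqrt T)$ from the time integral.

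The main obstacle I expect is the bookkeeping in the $Q'$ term: after inserting $P^i_t = e^{-(t-r)/\eps}P^i_r + \tilde P^i_{r,t}$ with $\tilde P^i_{r,t} = \int_r^t e^{-(t-s)/\eps}dW^i_s$, the product $P^i_tP^j_t$ expands into four pieces, and one must verify that the genuinely ``past-measurable'' piece $e^{-(t-r)/\eps}e^{-(t-r)/\eps}P^i_rP^j_r$, when combined with $Q^{ij\prime}_r$ and integrated, reproduces exactly the right main term rather than a spurious $\cO(\eps)$ contribution --- the point being that the double exponential $e^{-2(t-r)/\eps}$ integrated in $t$ over $[r,T]$ gives only $\cO(\eps)$, which against $\E\int|Q'|^2 \le C_2^2$ and $\E|P_r|^2 = \cO(\eps)$ yields $\cO(\eps^2)$ overall; the cross piece $e^{-(t-r)/\eps}P^i_r\tilde P^j_{r,t}$ vanishes in conditional expectation given $\cF_r$; and the ``future'' piece $\tilde P^i_{r,t}\tilde P^j_{r,t}$ is genuinely independent of $Q^{ij\prime}_r$, so its expectation factors as $\E Q^{ij\prime}_r \cdot \delta_{ij}\,\mathrm{Var}(\tilde P^i_{r,t})$ --- but $\mathrm{Var}(\tilde P^i_{r,t}) = \tfrac\eps2(1-e^{-2(t-r)/\eps})$, and after the double integration $\int_0^T\int_0^t \cdots dr\,dt$ this must be shown to combine with the $Q_0$ contribution to give precisely $\tfrac\eps2\E\int_0^T\sum_i Q^{ii}_t\,dt$ via the identity $Q^{ii}_t = Q^{ii}_0 + \int_0^t Q^{ii\prime}_r dr$ and Fubini. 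Keeping the error terms uniformly $\cO(\eps^2)$ and tracking the $(1+\sqrt T)$ factor through each Cauchy--Schwarz application is the delicate part; everything else is routine Gaussian computation and the moment estimates already recorded in Lemma~\ref{lem:approx-1}.
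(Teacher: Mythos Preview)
Your approach is correct but differs from the paper's. The paper does not decompose $Q_t = Q_0 + \int_0^t Q'_r\,dr$ or split $P_t$ at time $r$; instead it uses the differential identity $d(P^i_t)^2 = -\tfrac{2}{\eps}(P^i_t)^2\,dt + 2P^i_t\,dW^i_t + dt$ (and the analogous product rule for $P^i_tP^j_t$ when $i\ne j$, combined with \eqref{eq:diff-relation} and the symmetry $Q^{ij}=Q^{ji}$) to rewrite $\int_0^T f_t (P^i_t)^2\,dt$ directly as $\tfrac{\eps}{2}\int_0^T f_t\,dt$ plus a martingale plus $-\tfrac{\eps}{2}\int_0^T f_t\,d(P^i_t)^2$; the latter is integrated by parts in $t$ and bounded via $\E|P_T|^4 \le C\eps^2$ and the $C_2$ hypothesis on $f'=Q^{ii\prime}$. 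Your route is more elementary---it needs no It\^o formula on $P^2$, only Gaussian moment computations and conditional independence of $\tilde P_{r,t}$ from $\cF_r$---and in fact never uses the symmetry of $Q$, so it proves a slightly more general statement. The paper's route is shorter and cleaner (no four-term expansion of $P^i_tP^j_t$), reuses the same structural identity \eqref{eq:diff-relation} that drives Lemma~\ref{lem:martingale-approx}, and makes the parallel between the two lemmas transparent.
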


\begin{remark}
	\label{rem:ito}
	Lemmas~\ref{lem:martingale-approx} and~\ref{lem:diffusion-approx} give a justification (with explicit and optimal error rates) for the heuristics~\eqref{eq:Vtilal} because
	\begin{equation*}
		\E \int_0^T G \cdot \sqrt{\eps} dW_t = 0
	\end{equation*}
	and
	\begin{equation*}
		\E \int_0^T \sum_{i,j=1}^n Q^{ij} d \left[ \sqrt{\frac{\eps}{2}} W^i_t,\sqrt{\frac{\eps}{2}} W^j_t \right] = \frac{\eps}{2} \E \int_0^T \sum_{i=1}^n Q^{ii} \, dt \,.
	\end{equation*}
\end{remark}

\begin{proof}[Proof of Lemma~\ref{lem:martingale-approx}]
	To simplify our notations, we denote $G_t(\omega) = G(\omega,  t)$ and $H_t(\omega) = H(\omega,t)$.
	By definition of $J_t$, we have
	\begin{equation}
		\E |J_t|^2 = \E \int_0^t e^{-2(t-s)/\eps } \sum_{i=1}^n (H^{ii}_s)^2 \, ds
		\leq  \sum_{i=1}^n \int_0^t e^{-(t-s)/\eps} C_0^2 \, ds
		\leq C \eps \,.
		\label{ine:J-quadratic}
	\end{equation}
	\begin{equation}
		d J_t = -\frac{1}{\eps} J_t \, dt + H_t  dW_t \,.
		\label{eq:gen-diff-relation}
	\end{equation}

	Therefore,
	\begin{equation*}
		\int_0^T G_t \cdot \sqrt{\frac{2}{\eps}} J_t \, dt
		=
		\sqrt{2\eps} \int_0^T G_t \cdot \left( H_t dW_t - d J_t \right) \,.
	\end{equation*}

	Taking the expectation, we have
	\begin{equation}
		\begin{split}
			 & \E \int_0^T G_t \cdot \sqrt{\frac{2}{\eps}} J_t \, dt =- \sqrt{2\eps} \, \E \int_0^T G_t \cdot d J_t        \\
			 & = - \sqrt{2\eps} \, \E \left( G_T \cdot J_T \right) + \sqrt{2\eps} \, \E \int_0^T J_t \cdot G_t' \, dt  \,.
			\label{eq:Gt}
		\end{split}
	\end{equation}
	On the one hand, we have that, by H\"{o}lder inequality and~\eqref{ine:J-quadratic},
	\begin{equation}
		\left| \sqrt{2\eps} \, \E \left( G_T \cdot J_T \right)  \right| \leq
		\sqrt{2\eps} \left( \E |G_T|^2 \right)^{1/2} \left( \E  |J_T|^2 \right)^{1/2}\leq C \eps  \left( \E |G_T|^2 \right)^{1/2} \,.
		\label{ine:Gt}
	\end{equation}
	We also have
	\begin{equation}
		\begin{split}
			 & \paren{\E |G_T^2|}^{1/2} \leq \paren{\E |G_0|^2}^{1/2} + \paren{\E\abs{\int_0^T G'_t dt}^2}^{1/2}   \\
			 & \leq\left(\E G_0^2\right)^{1/2} +\sqrt{T}\paren{\E\int_0^T \abs{G'_t}^2\,dt}^{1/2}=C_1+\sqrt{T}C_2.
		\end{split}
	\end{equation}
	On the other hand, we have
	\begin{equation}
		\begin{split}
			 & \left|\sqrt{2\eps} \, \E \int_0^T J_t \cdot G_t' \, dt \right|
			\leq \sqrt{2\eps} \int_0^T \paren{\E\abs{J_t}^2}^{1/2}\paren{\E\abs{G_t'}^2}^{1/2}dt \\
			 & \leq\eps \sqrt{nT}\paren{\E\int_0^T \abs{G'_t}^2\,dt}^{1/2}
			\leq\eps \sqrt{nT}C_2.
			\label{ine:Gt2}
		\end{split}
	\end{equation}
	Plugging~\eqref{ine:Gt} and~\eqref{ine:Gt2} in~\eqref{eq:Gt}, we arrive at~\eqref{eq:EG-estimate}.

	To see~\eqref{eq:EGV-estimate}, we recall that $\Vtilal_t = \Vtilal_0 e^{-t/\eps} + \sqrt{ 2 / \eps } P_t$ and
	apply~\eqref{eq:EG-estimate-2} and H\"older inequality.
\end{proof}

\begin{proof}[Proof of Lemma~\ref{lem:diffusion-approx}]
	The proof of this lemma is similar to the previous one.
	\restartsteps
	\step
	Let us consider the case $i = j$.
	For notational simplicity, we write $Q^{ii}$ as $f$.
	By the It\^o formula,
	\begin{align*}
		d (P^i_t)^2=2 P^i_t dP^i_t +  dt = -\frac{2}{\eps} (P^i_t)^2 dt + 2P^i_t  dW^i_t + dt \,.
	\end{align*}
	where we used \eqref{eq:diff-relation}.
	Therefore,
	\begin{equation*}
		\int_0^T f_t (P^i_t)^2 \, dt =  - \frac{\eps}{2} \int_0^T   f_t d(P^i_t)^2 + \eps\int_0^T f_t P^i_t dW^i_t + \frac{\eps }{2} \int_0^T  f_t dt \,.
	\end{equation*}
	Taking expectation, we then have
	\begin{equation*}
		\E \int_0^T f_t (P^i_t)^2 \, dt =  - \frac{\eps}{2} \E \int_0^T  f_t d(P^i_t)^2 + \frac{\eps }{2} \E \int_0^T   f_t dt \,.
	\end{equation*}
	Now, define
	\begin{equation*}
		\cR_i = - \frac{\eps}{2} \E \int_0^T   f_t d(P^i_t)^2 \,
	\end{equation*}
	so that we write:
	\begin{equation*}
		\E \int_0^T f_t(P^i_t)^2\,dt = \frac{\eps }{2} \E \int_0^T   f_t dt + \cR_i.
	\end{equation*}
	The sum over $i$ of this is the diagonal part of \eqref{eq:diffusion-approx}.
	We now show that $|\cR_i|\lesssim \eps^2$.
	By integration by parts
	\begin{align*}
		\cR_i & = \frac{\eps}{2} \E \int_0^T   f_t d(P^i_t)^2 = \frac{\eps}{2}  \E \left( f_T (P^i_T)^2 \right) -  \frac{\eps}{2} \E \int_0^T (P^i_t)^2 f'_t \, dt
	\end{align*}
	In exactly the same way as in \eqref{ine:Gt2}, we have:
	\begin{equation}\label{eq:L2FT}
		\left(\E f^2_T\right)^{1/2} \leq C_1+\sqrt{T}C_2.
	\end{equation}
	Using \eqref{ine:P} from Lemma~\ref{lem:approx-1} and H\"older inequality
	on both terms on the right hand side
	then apply \eqref{eq:L2FT} in the first term, we have
	\begin{align*}
		\left|\E \left(f_T(P^i_T)^2\right)\right|
		                                             & \leq \left(\E f^2_T\right)^{1/2} \left(\E \left(P^i_T\right)^4\right)^{1/2}\leq \sqrt{\frac{3\eps^2}{4}}(C_1+\sqrt{T}C_2), \\
		\left| \E \int_0^T f'_t (P^i_t)^2\,dt\right| & \leq \int_0^T \left(\E (f'_t)^2\right)^{1/2} \left(\E(P^i_t)^4\right)^{1/2}\,dt \leq \sqrt{\frac{3\eps^2T}{4}} C_2.
	\end{align*}
	Combining the above estimates, we see that
	\begin{equation}\label{Rii}
		\abs{\cR_i}\leq C(1 + \sqrt{T})\varepsilon^2
	\end{equation}
	where the constant $C$ depends only on $C_1,C_2$.
	\step We now consider $i \not= j$.
	Again, for simplicity, we write $Q_t^{ij}$ as $f_t$.
	Using $P_t^i \stackrel{d}{=} P_t^j$ and $Q_t^{ij}=Q_t^{ji}=f$, we have
	\begin{align*}
		 & \E \int_0^T P^i_t  f_t P^j_t  \, dt = \eps \E \int_0^T P^i_t f_t \left( - dP^j_t + dW^j_t  \right) \, dt \\
		 & = - \eps \E \int_0^T P^i_t f_t dP^j_t
		= - \frac{\eps}2 \E \int_0^T \left( P^i_t f_t dP^j_t +  P^j_t f_t dP^i_t\right)                             \\
		 & = - \frac{\eps}2 \E \int_0^T f_t \left( P^i_t  dP^j_t +  P^j_t dP^i_t\right)
		= - \frac{\eps}2 \E \int_0^T f_t d(P^i_t P^j_t)                                                             \\
		 & = -\frac{\eps}2 \E \left( f_T P^i_T P^j_T - \int_0^T (P^i_t P^j_t) f'_t \, dt  \right)
	\end{align*}
	In exactly the same way as in the estimation of $R_i$ in Step 1 above,
	we obtain:
	\begin{equation}\label{Rij}
		\left| \int_0^T P^i_t f_t P^j_t \,dt \right| \leq C(1 + \sqrt{T}) \eps^2 \,.
	\end{equation}
	for some constant $C>0$ that depends only on $C_1, C_2$.
	\step
	Define
	\begin{equation*}
		\cR = \sum_{i=1}^n \cR_i + \sum_{i\not=j}^n \E \int_0^T P^i_t Q^{ij}_t P^j_t \, dt.
	\end{equation*}
	Combining \eqref{Rii} and \eqref{Rij}, we obtain \eqref{eq:diffusion-approx}
\end{proof}

To end this section, we note some simple a-priori estimates for $\Vtilal_t$ and $\Val_t$.

\begin{lemma}
	Assume that $\Vtilal_0$ satisfies assumption~\eqref{H3}.
	Then
	\begin{equation}
		\E \left| \Vtilal_t  \right|^p \leq C \,,
		\label{ine:Vtilde}
	\end{equation}
	and
	\begin{equation}
		\E \left|\int_0^t\Vtilal_s\, ds \right|^p \leq
		C  ( t^{p/2} \eps^{p/2} + \eps^p) \,,
		\label{eq:Vtilde-est}
	\end{equation}
	for some $C>0$, depending on $p$.
\end{lemma}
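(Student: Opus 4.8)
The plan is to read both bounds off the explicit solution of the Ornstein--Uhlenbeck equation satisfied by $\Vtilal_t$, using only the moment bound \eqref{ine:P} for $P_t$ and the identity \eqref{eq:diff-relation}; nothing deep is needed. Solving $d\Vtilal_t=-\eps^{-1}\Vtilal_t\,dt+\sqrt{2/\eps}\,dW_t$ gives, with $P_t$ as in \eqref{Pt},
\begin{equation*}
	\Vtilal_t=\Vtilal_0\,e^{-t/\eps}+\sqrt{\tfrac{2}{\eps}}\,P_t .
\end{equation*}
For \eqref{ine:Vtilde} I would take the $L^p(\Omega)$ norm, use the triangle inequality and $e^{-t/\eps}\le 1$, and then invoke \eqref{H3} together with the upper bound $\E\abs{P_t}^p\le C\eps^{p/2}$ of \eqref{ine:P}:
\begin{equation*}
	\bigl(\E\abs{\Vtilal_t}^p\bigr)^{1/p}\le\bigl(\E\abs{\Vtilal_0}^p\bigr)^{1/p}+\Bigl(\tfrac{2}{\eps}\Bigr)^{1/2}\bigl(\E\abs{P_t}^p\bigr)^{1/p}\le\bigl(\E\abs{\Vtilal_0}^p\bigr)^{1/p}+\sqrt{2}\,C^{1/p},
\end{equation*}
which is bounded uniformly in $t\ge 0$ and $\eps\in(0,1]$.

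For \eqref{eq:Vtilde-est} I would integrate the displayed formula for $\Vtilal_t$ in time. The deterministic part integrates to $\eps(1-e^{-t/\eps})\Vtilal_0$; and integrating \eqref{eq:diff-relation} over $[0,t]$ with $W_0=P_0=0$ gives $\int_0^t P_s\,ds=\eps(W_t-P_t)$, so that
\begin{equation*}
	\int_0^t\Vtilal_s\,ds=\eps(1-e^{-t/\eps})\,\Vtilal_0+\sqrt{2\eps}\,(W_t-P_t).
\end{equation*}
Taking $L^p(\Omega)$ norms and using \eqref{H3}, $\E\abs{W_t}^p=c_{n,p}\,t^{p/2}$ (since $W_t\sim\mathcal{N}(0,tI_n)$), and \eqref{ine:P}, one obtains $\bigl(\E\abs{\int_0^t\Vtilal_s\,ds}^p\bigr)^{1/p}\le C(\eps+\sqrt{\eps t})$, hence $\E\abs{\int_0^t\Vtilal_s\,ds}^p\le C(\eps^p+\eps^{p/2}t^{p/2})$; on a bounded interval $t\in[0,T]$ this is $\le C(t\eps^{p/2}+\eps^p)$ because $t^{p/2}\le T^{p/2-1}t$ for $p\ge 2$.

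The argument presents no genuine obstacle. The only point worth flagging is that the gain of a full power of $\eps$ in $\int_0^t P_s\,ds=\eps(W_t-P_t)$---which is what promotes the trivial estimate $\bigl(\E\abs{\int_0^t\Vtilal_s\,ds}^p\bigr)^{1/p}\le\int_0^t\bigl(\E\abs{\Vtilal_s}^p\bigr)^{1/p}\,ds\lesssim t$ to the sharp $O(\sqrt{\eps})$ size---comes exactly from the near-martingale identity \eqref{eq:diff-relation} already exploited in Lemmas~\ref{lem:martingale-approx} and~\ref{lem:diffusion-approx}. I would also read the $t$ appearing in \eqref{eq:Vtilde-est} as ranging over a bounded interval $[0,T]$, with $C$ then allowed to depend on $T$; uniformly over $t\in[0,\infty)$ the honest bound is the (slightly larger) $C(\eps^p+\eps^{p/2}t^{p/2})$ obtained above.
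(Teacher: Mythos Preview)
Your argument is correct and follows essentially the same route as the paper: both start from the explicit formula $\Vtilal_t=\Vtilal_0 e^{-t/\eps}+\sqrt{2/\eps}\,P_t$, then integrate using \eqref{eq:diff-relation} to get $\int_0^t\Vtilal_s\,ds=\eps(1-e^{-t/\eps})\Vtilal_0+\sqrt{2\eps}(W_t-P_t)$, and read off the moment bounds from \eqref{H3} and \eqref{ine:P}. Your observation that the honest global bound is $C(\eps^p+\eps^{p/2}t^{p/2})$, reducing to $C(t\eps^{p/2}+\eps^p)$ only on a bounded interval via $t^{p/2}\le T^{p/2-1}t$, is in fact more careful than the paper's own write-up, which silently passes from $(2\eps)^{p/2}\E|W_t|^p$ to $C\eps^{p/2}t$.
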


\begin{proof}
	We have that
	\begin{equation}
		\begin{aligned}
			\Vtilal_t
			 & = \Vtilal_0 e^{-t/\eps} + \sqrt{\frac{2}{\eps}} P_t \,.
		\end{aligned}
		\label{eq:Vtilda}
	\end{equation}
	Therefore, by~\eqref{ine:P} and that $\E |\Vtilal_0|^2 \leq ( \E | \Vtilal_0|^p )^{1/p}< \infty$,
	\begin{align*}
		\E \left| \Vtilal_t  \right|^p
		 & \leq \E \left( \left|\Vtilal_0\right|e^{-t/\eps}
		+ \sqrt{\frac{2}{\eps}} \left|P_t\right|\right)^p
		\leq 2^{p-1} \left(\E \left| \Vtilal_0 \right|^p e^{-pt/\eps}
		+ \left| \frac{2}{\eps} \right|^{p/2} \E |P_t|^p\right)             \\
		 & = 2^{p-1} \left( \E | \Vtilal_0 |^p e^{-pt/\eps} + C \right) \,,
	\end{align*}
	from which~\eqref{ine:Vtilde} follows.

	For \eqref{eq:Vtilde-est}, we write
	\begin{align}
		\int_0^t \Vtilal_s \, ds & =
		\Vtilal_0 \int_0^t e^{-s/\eps}\,ds +
		\sqrt{\frac{2}{\eps}} \int_0^t P_s\,ds \nonumber                                                               \\
		                         & = \eps \Vtilal_0 \left(1-e^{-t/\eps}\right) + \sqrt{2\eps} \left(-P_t + W_t\right).
		\label{eq:int-Vtilde}
	\end{align}
	Therefore,
	\begin{align*}
		\E
		\left| \int_0^t\Vtilal_s\,ds \right|^p
		 & \leq \E \left(\eps \left|\Vtilal_0\right|
		\left(1-e^{-t/\eps}\right) + \sqrt{2\eps} \left(
		|P_t| + |W_t|\right)\right)^p                                                                                            \\
		 & \leq 4^{p-1} \left(
		\eps^p \E \left|\Vtilal_0\right|^p \left(1-e^{-t/\eps}\right)^p
		+ (2\eps)^{p/2} \E|P_t|^p + (2\eps)^{p/2} \E|W_t|^p\right)                                                               \\
		 & \leq C \left(\eps^p \E \left|\Vtilal_0\right|^p \left(1-e^{-t/\eps}\right)^p + \eps^p\right) +C\eps^{p/2} t^{p/2} \,,
	\end{align*}
	from which~\eqref{eq:Vtilde-est} follows.
\end{proof}

\begin{lemma}
	Let $\Val_t$ be given by~\eqref{eq:Vsol}.
	Then there exists a constant $C>0$, depending only on $b$, such that
	\begin{equation}
		\E \left|\Val_t\right|^p \leq C \,.
		\label{eq:EV2}
	\end{equation}
\end{lemma}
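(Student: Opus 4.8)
The plan is to split $\Val_t$ according to the representation~\eqref{eq:Vsol} into its deterministic-decay part, its drift-convolution part, and its stochastic part, bound each separately in $L^p(\Omega)$ uniformly in $t\ge 0$ and $\eps\in(0,1]$, and recombine using the elementary inequality $(a_1+a_2+a_3)^p\le 3^{p-1}(a_1^p+a_2^p+a_3^p)$. Writing
\[
\Val_t = \underbrace{V_0\, e^{-t/\eps}}_{=:\,I_1}
\;+\; \underbrace{\frac{1}{\eps}\int_0^t e^{-(t-s)/\eps} b(\Xal_s,s)\,ds}_{=:\,I_2}
\;+\; \underbrace{\sqrt{\frac{2}{\eps}}\,P_t}_{=:\,I_3},
\]
with $P_t$ as in~\eqref{Pt}, it suffices to show $\E|I_k|^p\le C$ for $k=1,2,3$ with $C$ independent of $t$ and $\eps$.

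For $I_1$, I would use that $V_0 = b(\Xal_0,0) + \Vtilal_0$ by the definition of $\Vtilal_0$; then~\eqref{H1} bounds $|b(\Xal_0,0)|$ by $\|b\|_{C^0}$ and~\eqref{H3} gives $\E|\Vtilal_0|^p<\infty$, so $\E|I_1|^p \le e^{-pt/\eps}\,2^{p-1}\big(\|b\|_{C^0}^p + \E|\Vtilal_0|^p\big)\le C$. For $I_2$, I would simply bound the integrand by its supremum: $|I_2| \le \|b\|_{C^0}\,\frac{1}{\eps}\int_0^t e^{-(t-s)/\eps}\,ds = \|b\|_{C^0}\,(1-e^{-t/\eps}) \le \|b\|_{C^0}$, a pathwise bound, hence $\E|I_2|^p \le \|b\|_{C^0}^p$. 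For the stochastic term $I_3$, the upper bound in~\eqref{ine:P} of Lemma~\ref{lem:approx-1} gives $\E|I_3|^p = (2/\eps)^{p/2}\,\E|P_t|^p \le (2/\eps)^{p/2}\,C\eps^{p/2} = 2^{p/2}C$, again uniformly in $t$ and $\eps$.

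Combining the three bounds yields~\eqref{eq:EV2} with a constant depending only on $p$, $\|b\|_{C^0}$, and $\E|\Vtilal_0|^p$ — in particular only on $b$ (and $p$), as claimed. I do not expect any genuine obstacle here: this is a routine a-priori estimate. The only point requiring a little care is that each of the three bounds be uniform in $\eps$ and $t$; the decaying exponential handles $I_1$, the unit (sub-probability) mass of the convolution kernel $\eps^{-1}e^{-(t-s)/\eps}\,ds$ handles $I_2$, and the factor $\eps^{p/2}$ in~\eqref{ine:P} exactly cancels the prefactor $\eps^{-p/2}$ in $I_3$.
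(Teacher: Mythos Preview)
Your proposal is correct and follows essentially the same route as the paper: split $\Val_t$ via~\eqref{eq:Vsol} into initial, drift-convolution, and stochastic parts, bound each using $\Val_0=\Vtilal_0+b_0$, the unit mass of the kernel $\eps^{-1}e^{-(t-s)/\eps}$, and~\eqref{ine:P} respectively, then recombine with the elementary power inequality. Your version is slightly more explicit about the uniformity in $t$ and $\eps$ and about the dependence of the constant on $\E|\Vtilal_0|^p$, but the argument is the same.
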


\begin{proof}
	By triangle inequality, we have
	\begin{align*}
		\E \left|\Val_t\right|^p
		 & \leq \E \left(
		\left|\Val_0\right| e^{-t/\eps}
		+ \frac{1}{\eps} \int_0^t e^{-(t-s)/\eps} |b_s|\,ds
		+ \sqrt{\frac{2}{\eps}} |P_t|
		\right)^p                                             \\
		 & \leq C\E \left( \left|\Val_0\right|^p e^{-pt/\eps}
		+ \frac{1}{\eps^p} \left(\int_0^t e^{-(t-s)/\eps} |b_s|\,ds\right)^p
		+ \left( \frac{2}{\eps} \right)^{p/2}|P_t|^p
		\right)                                               \\
		 & \leq C \,.
	\end{align*}
	The last inequality follows from~\eqref{ine:P} and the fact that $\Val_0 = \Vtilal_0 + b_0$.
\end{proof}

\section{Proof of Strong Estimate~\eqref{ine:strong-est} }
\label{sec:strong}

As mentioned in subsection~\ref{subsec:derivation}, the key to the strong estimate
is to understand the difference between
$F(\Xal_t,t)$ and $A_t$.
This is the goal of this section.
To compress our notation, we denote
$$b_t = b(\Xal_t, t)\,, \qquad F_t = F(\Xal_t,t) \,.$$

\subsection{Drift approximations}

First, we estimate $L_t:=(\Vtilal_t + b_t)-\Val_t$.
From~\eqref{eq:Vsol} and integrating by parts, we have
\begin{equation}
	L_t := (\Vtilal_t + b_t) - \Val_t
	=   \int_0^t e^{-(t-s)/\eps} (\partial_t  b_s  + D_x  b_s \Val_s ) ds \,.
	\label{eq:V-diff-id}
\end{equation}
\begin{lemma}
	There exist  constants $C$ and $\eps_0$, depending only on $b$ and $p$, such that
	for $\eps \leq \eps_0$,
	\begin{equation}
		\E  \abs{L_T}^p  \leq  C \eps^p \,.
		\label{eq:V-diff}
	\end{equation}
\end{lemma}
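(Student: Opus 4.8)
The plan is to work directly from the integral representation \eqref{eq:V-diff-id}, which already isolates everything we need: $L_t$ is the convolution of the exponential kernel $e^{-(t-s)/\eps}$ with the process $g_s := \partial_t b_s + D_x b_s\,\Val_s$. The first step is to record the pointwise bound $|g_s| \le \norm{b}_{C^1}(1 + |\Val_s|)$, which is immediate from \eqref{H1} and the definition of $g$. Combining this with the uniform moment estimate \eqref{eq:EV2} gives $\sup_{s\ge 0}\E|g_s|^p \le C$, with $C$ depending only on $b$ and $p$; it is precisely the $s$-uniformity of \eqref{eq:EV2} that will make the final constant independent of $T$.

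The second step is to treat the exponential kernel as a sub-probability weight. Write $w_t(s) = \eps^{-1}e^{-(t-s)/\eps}$ on $[0,t]$, so that $\int_0^t w_t(s)\,ds = 1-e^{-t/\eps} \le 1$ and $L_t = \eps\int_0^t g_s\,w_t(s)\,ds$. Applying H\"older's inequality in $s$ against the weight $w_t$ (with exponents $p$ and $p/(p-1)$) yields
\[
|L_t|^p \le \eps^p\Big(\int_0^t w_t(s)\,ds\Big)^{p-1}\int_0^t |g_s|^p\,w_t(s)\,ds \le \eps^p\int_0^t |g_s|^p\,w_t(s)\,ds,
\]
using that the total mass is $\le 1$. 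Taking expectations, using Tonelli and the uniform bound from Step 1,
\[
\E|L_t|^p \le \eps^p\int_0^t \E|g_s|^p\,w_t(s)\,ds \le C\eps^p\int_0^t w_t(s)\,ds \le C\eps^p,
\]
which gives \eqref{eq:V-diff} at $t=T$. In fact the bound holds for every $t$ with $C$ independent of $t$, so any $\eps_0>0$ works.

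There is no serious obstacle here. The only points requiring a little care are (i) using H\"older rather than Jensen directly, since $w_t$ carries mass slightly below one (harmless, since a number $\le 1$ raised to a positive power stays $\le 1$), and (ii) invoking \eqref{eq:EV2} to control $\E|\Val_s|^p$ uniformly in $s$, which is what keeps the constant in \eqref{eq:V-diff} free of any dependence on $T$.
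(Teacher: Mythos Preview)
Your proof is correct and takes a genuinely more elementary route than the paper. The paper proceeds by deriving a differential inequality for $\E|L_t|^p$: from the ODE $L_t' = -\eps^{-1}L_t + g_t$ it computes $\frac{d}{dt}\E|L_t|^p$, isolates the damping term $-p\eps^{-1}\E|L_t|^p$, bounds the remaining cross-terms by $C\eps^{p-1}$ (which in turn forces an induction on $p$, starting from $p=1$ and $p=2$), and then closes with an integrating-factor argument; this is also where the smallness restriction $\eps_0 < 1/\|b\|_{C^1}$ enters. Your approach bypasses all of this by treating the normalized kernel $\eps^{-1}e^{-(t-s)/\eps}$ as a sub-probability weight and applying H\"older directly against it, which yields the bound for every $p\ge 1$ simultaneously and with no restriction on $\eps$. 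What the paper's differential-inequality machinery could buy in other settings is robustness to situations where no clean convolution structure is available; here, however, your argument is both shorter and strictly stronger.
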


\begin{proof}
	\restartsteps
	\step[First moment]
	By the H\"older inequality applied to \eqref{eq:V-diff-id} and by~\eqref{eq:EV2}.
	\begin{equation}
		\E \abs{ L_t  }  \leq C   \eps  \,.
		\label{eq:V-diff-L1}
	\end{equation}

	\step[Second moment]
	Next, note that
	\begin{equation*}
		\frac{dL_t}{dt} = -\frac{1}{\eps} L_t + (\partial_t b_t + D_x b_t \Val_t)
	\end{equation*}

	Therefore,
	\begin{equation*}
		\frac{d}{dt} \abs{ L_t }^2 = 2 L_t \cdot  \frac{d}{dt} L_t  = -\frac{2}{\eps} |L_t|^2 + 2 L_t \cdot ( \partial_t b_t + D_x b_t \Val_t ) \,.
	\end{equation*}

	Taking expectation of both sides, we have
	\begin{align}
		\frac{d}{dt} \E \abs{ L_t }^2
		 & = - \frac{2}{\eps} \E|L_t|^2 + 2 \E \left( L_t \cdot ( \partial_t b_t + D_x b_t \Val_t ) \right)                        \nonumber \\
		 & = -\frac{2}{\eps}\E  |L_t|^2 + 2\E \left( L_t \cdot ( \partial_t b_t)\right)
		+ 2 \E \left( L_t \cdot (  D_x b_t L_t ) \right) \nonumber                                                                           \\
		 & \qquad - 2\E ( L_t \cdot ( D_x b_t (\Vtilal_t + b_t) ) )  \nonumber                                                               \\
		 & \leq 2 \left(- \frac{1}{\eps} + \norm{b}_{C^1} \right) \E |L_t|^2
		+  2 \norm{b}_{C^1} \E |L_t|  - 2 \E ( L_t \cdot ( D_x b_t (\Vtilal_t + b_t) ) ) \nonumber                                           \\
		 & \leq 2 \left(- \frac{1}{\eps} + \norm{b}_{C^1} \right) \E |L_t|^2
		+ 2C\eps  - 2 \E ( L_t \cdot ( D_x b_t \Vtilal_t  ) ) \,,
		\label{eq:dL2}
	\end{align}
	where the last inequaltiy followed from~\eqref{eq:V-diff-L1}.
	Now, we have
	\begin{align}
		 & \abs{\E  \left( L_t \cdot (D_x b_t \Vtilal_t) \right)  }  \nonumber                                                                \\
		 & = \abs{ \E  \left(\int_0^t e^{-(t-s)/\eps} (\partial_t b_s + D_x b_s \Val_s) \, ds  \right) \cdot (D_x b_t \Vtilal_t)  } \nonumber \\
		 & \leq C  \int_0^t e^{-(t-s)/\eps} \norm{b}_{C^1}^2 \E ((1 +|\Val_s||\Vtilal_t|) \, ds  \nonumber                                    \\
		 & \leq C  \eps \,,
		\label{ine:dL2-last}
	\end{align}
	where the last inequality followed from~\eqref{ine:Vtilde} and~\eqref{eq:EV2}.
	Therefore,
	\begin{equation*}
		\frac{d}{dt} \E \abs{ L_t }^2 \leq 2 \left(- \frac{1}{\eps} + \norm{b}_{C^1} \right) \E |L_t|^2 + 2C \eps \,.
	\end{equation*}

	It follows that for $\eps\leq \eps_0 < 1/\norm{b}_{C^1}$,
	using integrating factor $ e^{(-1/\eps + \norm{b}_{C^1})2 t   } $ and the fact that $L_0 = 0$, we have
	\begin{align*}
		\E \abs{ L_T }^2
		 & \leq
		\frac{C\eps^2}{1 -  \eps \norm{b}_{C^1}} \,,
	\end{align*}
	from which~\eqref{eq:V-diff} follows.

	\step[$p$-th moment for $p\geq 3$]
	Suppose $\E |L_t |^{p-1} \leq C \eps^{p-1}$.
	Perform a similar computation as above we have
	\begin{align}
		\frac{d}{dt} \E \abs{ L_t }^p
		 & = - \frac{p}{\eps} \E|L_t|^p + p \E \left( |L_t|^{p-2} L_t \cdot ( \partial_t b_t + D_x b_t \Val_t ) \right)                        \nonumber \\
		 & = -\frac{p}{\eps}\E  |L_t|^p + p\E \left(|L_t|^{p-2} \left( L_t \cdot ( \partial_t b_t)\right) \right)
		+ p \E \left( |L_t|^{p-2} \left( L_t \cdot (  D_x b_t L_t ) \right)\right) \nonumber                                                             \\
		 & \qquad - p \E \left( |L_t|^{p-2} ( L_t \cdot ( D_x b_t (\Vtilal_t + b_t) ) ) \right)  \nonumber                                               \\
		 & \leq p \left(- \frac{1}{\eps} + \norm{b}_{C^1} \right) \E |L_t|^p
		+ pC\eps^{p-1}
		- p \E \left(|L_t|^{p-2} ( L_t \cdot ( D_x b_t \Vtilal_t  ) ) \right) \,.
		\label{eq:dLp}
	\end{align}

	We have
	\begin{align}
		 & \left| \E\left( |L_t|^{p-2} ( L_t \cdot ( D_x b_t \Vtilal_t  ) ) \right) \right|                                              \nonumber             \\
		 & \leq \left( \E |L_t|^{p-1} \right)^{\frac{p-2}{p-1}} \left( \E \left|  L_t \cdot D_x b_t \Vtilal_t \right|^{p-1} \right)^{\frac{1}{p-1}}  \nonumber \\
		 & \leq C \eps^{p-2}\left( \E \left|  L_t \cdot D_x b_t \Vtilal_t \right|^{p-1} \right)^{\frac{1}{p-1}} \,.
		\label{ine:dLp-1}
	\end{align}

	Now, by Minkowski inequality,
	\begin{align}
		 & \left( \E \left|  L_t \cdot D_x b_t \Vtilal_t \right|^{p-1} \right)^{\frac{1}{p-1}}                                                                 \nonumber     \\
		 & =  \left( \E \left| \int_0^t  e^{-(t-s)/\eps} (\partial_t b_s + D_x b_s \Val_s)  \cdot (D_x b_t \Vtilal_t) ds  \right|^{p-1}  \right)^{\frac{1}{p-1}} \nonumber   \\
		 & \leq    \int_0^t  \left(\E \left| e^{-(t-s)/\eps} (\partial_t b_s + D_x b_s \Val_s)  \cdot (D_x b_t \Vtilal_t) \right|^{p-1}\right)^{\frac{1}{p-1}} ds  \nonumber \\
		 & \leq \int_0^t e^{-(t-s)/\eps} \norm{b}_{C^1} \left(\E( 1 + |\Val_s| |\Vtilal_t|)^{p-1}\right)^{\frac{1}{p-1}} \, ds                  \nonumber                    \\
		 & \leq C \eps \,.
		\label{ine:dLp-2}
	\end{align}

	Applying estimates~\eqref{ine:dLp-1} and~\eqref{ine:dLp-2} to~\eqref{eq:dLp} we have
	\begin{equation*}
		\frac{d}{dt} \E | L_t |^p \leq p \left(  -\frac{1}{\eps} + \norm{b}_{C^1} \right)\E | L_t |^p + Cp \eps^{p-1}\,,
	\end{equation*}
	which implies
	\begin{equation*}
		\E | L_t |^p \leq C \eps^p \,.
	\end{equation*}

	By induction,~\eqref{eq:V-diff} is true.
\end{proof}

\begin{remark}
	One cannot apply Lemma~\ref{lem:martingale-approx} to derive~\eqref{ine:dL2-last} because $L_t^T D_x b_t$ does not satisfy the
	condition for $\int_0^T |G_t'|^2 \, dt $, as this quantity depends on $\eps$ in this case.
	The exponential structure of $L_t$ plays the saving role in this case.
\end{remark}

Next, we would like to estimate the difference $A_t - F_t$.
By definition,
\begin{align}
	A_t-F_t
	 & =b_0 e^{-t/\eps} + \frac{1}{\eps} \int_0^t e^{-(t-s)/\eps} b_s \, ds
	-  b_t + \eps \left( \partial_t b_t + D_x b_t b_t \right). \label{eq:A-F}
\end{align}

Using integration by parts, we compute the following integral
\begin{align}
	\frac{1}{\eps} \int_0^t e^{-(t-s)/\eps} b_s\,ds
	 & = b_t - b_0 e^{-t/\eps} - \int_0^t e^{-(t-s)/\eps} \left(\partial_t b_s + D_x b_s \Val_s\right) \, ds  \nonumber \\
	 & = b_t - b_0 e^{-t/\eps} - L_t \,.
	\label{eq:A-F-L}
\end{align}

Plugging~\eqref{eq:A-F-L} into~\eqref{eq:A-F}, we have
\begin{equation}
	A_t - F_t = -L_t + \eps (\partial_t b_t + D_x b_t b_t)
	\label{eq:A-F_3}
\end{equation}

Applying~\eqref{eq:V-diff} into this identity, we get the following proposition:

\begin{proposition}
	There exist constants $C, \eps_0>0$, depedning only on $b$, such that for
	$\eps < \eps_0$,
	\begin{equation}
		\E  |A_t - F_t|^p  \leq C \eps^p \,.
		\label{eq:A-F-est}
	\end{equation}
\end{proposition}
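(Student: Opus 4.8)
The plan is to read the result off from the exact decomposition \eqref{eq:A-F_3} together with the moment bound \eqref{eq:V-diff} for $L_t$ that has just been proved. Recall that \eqref{eq:A-F_3} states
\[
	A_t - F_t = -L_t + \eps\left(\partial_t b_t + D_x b_t\, b_t\right),
\]
so $A_t - F_t$ is, up to sign, equal to $L_t$ plus a term that carries an explicit factor of $\eps$ and is built only from $b$ and its first derivatives evaluated along the trajectory $\Xal_t$.

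First I would bound the second summand deterministically: since $b\in C^\infty(\T^n\times[0,\infty);\R^n)$ and $\T^n$ is compact, the vector field $\partial_t b + D_x b\, b$ is bounded, say $\left|\partial_t b(x,t) + D_x b(x,t)\, b(x,t)\right|\leq M$ with $M = \norm{b}_{C^1}\bigl(1 + \norm{b}_{C^1}\bigr)$ a constant depending only on $b$; hence $\left(\E\,\bigl|\eps(\partial_t b_t + D_x b_t\, b_t)\bigr|^p\right)^{1/p}\leq \eps M$ uniformly in $t$ and $\eps$. Next I would apply Minkowski's inequality in $L^p(\Omega)$ to \eqref{eq:A-F_3}:
\[
	\left(\E\,|A_t - F_t|^p\right)^{1/p} \leq \left(\E\,|L_t|^p\right)^{1/p} + \eps M \leq C_0^{1/p}\eps + \eps M,
\]
where the bound on the first term is exactly \eqref{eq:V-diff}, valid for all $\eps<\eps_0$ with $\eps_0$ and $C_0$ depending only on $b$ and $p$. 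Raising to the $p$-th power and relabelling the constant gives $\E\,|A_t - F_t|^p \leq C\eps^p$ with $C = C(b,p)$, which is the claim.

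There is essentially no obstacle left at this stage: all of the analytic difficulty was absorbed into the proof of \eqref{eq:V-diff}, i.e.\ into controlling $L_t$ via the Gronwall-type estimate with the favorable coefficient $-1/\eps + \norm{b}_{C^1}$. The only points worth checking explicitly are that the constant in \eqref{eq:V-diff} is uniform in $t$ (it is, since that estimate comes with an integrating factor $e^{2(-1/\eps + \norm{b}_{C^1})t}$ that decays, together with $L_0 = 0$), and that the threshold $\eps_0$ may be taken to be the same one used there, namely any $\eps_0 < 1/\norm{b}_{C^1}$. Both are immediate, so the proposition follows.
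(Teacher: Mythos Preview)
Your proof is correct and follows exactly the paper's approach: the paper simply notes that \eqref{eq:A-F-est} follows by applying the moment bound \eqref{eq:V-diff} to the identity \eqref{eq:A-F_3}, and you have carried out precisely this step, filling in the Minkowski inequality and the deterministic bound on $\eps(\partial_t b_t + D_x b_t\, b_t)$.
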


\subsection{Proof of strong estimate~\eqref{ine:strong-est}}
\label{subsec:strong}
We conclude this section with the proof of the strong estimate~\eqref{ine:strong-est}.
Let $(\Xal_t, \Val_t)$ and $\Zal_t$ be solutions of~\eqref{eq:Langevin}
and~\eqref{eq:advection-diffusion}, respectively, with
initial data $\Xal_0 = \Zal_0 =  z_0 \in \T^n$.
Note that
by identity~\eqref{eq:diff-relation},
\begin{align*}
	X_t^\varepsilon & = z_0 + \int_0^t A_s\,ds + \sqrt{2\varepsilon} \left(W_t - P_t\right), \\
\end{align*}
and by the definition,
\begin{equation*}
	Z_t^\varepsilon = z_0 + \int_0^t F_s\,ds + \sqrt{2\varepsilon}W_t.
\end{equation*}
We have that
\begin{align*}
	 & \E\left| \Xal_t - \Zal_t \right|^p                                                                 \\
	 & =
	\E\left|\int_0^t \left(A_s - F(\Zal_s,s)\right) \, ds
	- \sqrt{2\eps} P_t  \right|^p                                                                         \\
	 & \leq  C \int_0^t \E\left| A_s - F(\Xal_s,s)  \right|^p \, ds
	+ C\int_0^t \E\left| F(\Xal_s,s) - F(\Zal_s,s)  \right|^p \, ds
	+ C \eps^{p/2} \E\left|P_t   \right|^p                                                                \\
	 & \leq C \int_0^t \E\left| A_s - F(\Xal_s,s)  \right|^p \, ds
	+ C\norm{F}_{C^1}^p \int_0^t \E\left| \Xal_s - \Zal_s  \right|^p \, ds
	+ C \eps^p                                                                                            \\
	 & \leq  C\norm{F}_{C^1}^p \int_0^t \E\left| \Xal_s - \Zal_s  \right|^p \, ds  + C (1 + T) \eps^p \,.
\end{align*}
where we use \eqref{ine:P}  in the second-to-last inequality and~\eqref{eq:A-F-est} in the last inequality.

By Gronwall inequality, \eqref{ine:strong-est} holds. \qed

\section{Estimate $II$ in~\eqref{eq:I-II}}
\label{sec:A-F}

As mentioned in subsection~\ref{subsec:weak-est}, the averaging effect upgrades the
error rate of $A_t -F_t$ to $\cO(\eps^2)$.
We will study this in the current section.

First, we need to rewrite $A_t - F_t$.
Recall from~\eqref{eq:A-F_3} that
\begin{equation*}
	A_t - F_t = -L_t + \eps (\partial_t b_t + D_x b_t b_t)
\end{equation*}
where
\begin{equation*}
	L_t  =  \int_0^t e^{-(t-s)/\eps} (\partial_t  b_s  + D_x  b_s \Val_s ) ds \,.
\end{equation*}
Integrating $L_t$ by parts, we then have
\begin{equation}
	A_t - F(\Xal_t, t)  = \eps D_x b_t( b_t - \Val_t) + \hat R_t +  R_t + U_t \,,
	\label{eq:A-F_4}
\end{equation}
where
\begin{align}
	R_t       & = \eps \int_0^t e^{-(t-s)/\eps} D_x b_s dV^\eps_s    \label{eq:R}             \,,                                                  \\
	\hat{R}_t & = \eps e^{-t/\eps}(\partial_t b_0 + D_x b_0 \Val_0)\,,                                                                             \\
	U_t       & = \eps \int_0^t e^{-(t-s)/\eps} (\partial^2_t b_s + 2 \partial_t D_x b_s \Val_s + (D_x^2 b \Val_s)\Val_s ) \, ds\,. \label{eq:U_t}
\end{align}

\begin{remark}
	\label{rem:O2}
	From the above calculation, it is necessary for the term $\eps(\partial_t b_t + D_xb_t b_t)$
	to be present for us to achieve $\cO(\eps^2)$ as the $O(\eps)$ order coming from the intergration by parts
	of $L_t$ is controlled out by this term..
\end{remark}

We first note some easy bounds:

\begin{lemma}
	\label{lem:easy-est}
	There exists a constant $C>0$, depending only on $b$, such that
	\begin{equation}
		\E|\hat{R}_t|
		\leq C \eps e^{-t/\eps}  \,,
		\label{ine:C-hat}
	\end{equation}
	and
	\begin{equation}
		\E | U_t | \leq C \eps^2 \,.
		\label{ine:U}
	\end{equation}
\end{lemma}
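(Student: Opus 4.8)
To prove Lemma~\ref{lem:easy-est}, the plan is to derive both bounds by elementary moment estimates, using only the a priori control on the moments of $\Val_0$ and of $\Val_s$ that has already been established in this section.

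For the bound on $\hat R_t$, I would note that in $\hat R_t = \eps e^{-t/\eps}(\partial_t b_0 + D_x b_0 \Val_0)$ the scalar prefactor $\eps e^{-t/\eps}$ is deterministic, so that
\begin{equation*}
	\E|\hat R_t| \leq \eps e^{-t/\eps}\, \E\bigl|\partial_t b_0 + D_x b_0 \Val_0\bigr| \leq \eps e^{-t/\eps}\,\norm{b}_{C^1}\bigl(1 + \E|\Val_0|\bigr).
\end{equation*}
Since $\Val_0 = \Vtilal_0 + b_0$ and \eqref{H3} (with $p \geq 4$) gives $\E|\Vtilal_0|^p < \infty$, hence $\E|\Val_0| \leq \E|\Vtilal_0| + \norm{b}_{C^0} < \infty$, the quantity $\norm{b}_{C^1}(1 + \E|\Val_0|)$ is a constant depending only on $b$ and the law of $\Vtilal_0$, which yields \eqref{ine:C-hat}.

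For the bound on $U_t$, I would apply the triangle (Minkowski) inequality to move the expectation inside the time integral:
\begin{equation*}
	\E|U_t| \leq \eps \int_0^t e^{-(t-s)/\eps}\, \E\bigl|\partial_t^2 b_s + 2\,\partial_t D_x b_s\,\Val_s + (D_x^2 b\,\Val_s)\Val_s\bigr|\,ds.
\end{equation*}
Each of $\partial_t^2 b$, $\partial_t D_x b$ and $D_x^2 b$ is a fixed derivative of the smooth, spatially periodic field $b$ evaluated along $(\Xal_s,s)$, hence bounded pointwise by $\norm{b}_{C^2}$; therefore the integrand is dominated by $\norm{b}_{C^2}\bigl(1 + 2|\Val_s| + |\Val_s|^2\bigr)$. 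Taking expectations and invoking the a priori bound \eqref{eq:EV2} --- which, by H\"older's inequality, also controls $\E|\Val_s|$ and $\E|\Val_s|^2$ uniformly in $s$ and $\eps$ --- shows the expectation of the integrand is bounded by a constant $C$ depending only on $b$. Since $\int_0^t e^{-(t-s)/\eps}\,ds = \eps\bigl(1 - e^{-t/\eps}\bigr) \leq \eps$, we conclude $\E|U_t| \leq C\eps^2$, which is \eqref{ine:U}.

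There is no genuine obstacle here: the whole content is the triangle inequality, the pointwise boundedness of derivatives of the smooth field $b$, and the elementary identity $\int_0^t e^{-(t-s)/\eps}\,ds \leq \eps$. The only point requiring care is that every moment of $\Val_0$ and $\Val_s$ appearing in the estimates is bounded \emph{uniformly in $\eps$}, which is precisely what \eqref{H3} and \eqref{eq:EV2} supply; the exponential weight $e^{-(t-s)/\eps}$ then contributes the extra factor of $\eps$ that turns the $\cO(\eps)$ prefactor of $U_t$ into $\cO(\eps^2)$.
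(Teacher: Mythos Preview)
Your proof is correct and follows essentially the same approach as the paper's own argument: both bounds are obtained by pulling out the deterministic exponential factor, bounding derivatives of $b$ pointwise by $\norm{b}_{C^1}$ or $\norm{b}_{C^2}$, and invoking the uniform-in-$\eps$ moment bound \eqref{eq:EV2} together with $\int_0^t e^{-(t-s)/\eps}\,ds \leq \eps$. The only cosmetic difference is that the paper keeps track of a dimension factor $\sqrt{n}$ in the first estimate.
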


\begin{proof}

	By H\"older inequality, we have
	\begin{equation*}
		\E|\hat{R}_t|
		\leq \eps e^{-t/\eps} \sqrt{n}\left\|b\right\|_{C^1}\left(1+ \E|V_0^\eps|\right) \,,
	\end{equation*}
	which leads to~\eqref{ine:C-hat}.

	From \eqref{eq:U_t}, we have
	\begin{align*}
		\E |U_t|
		 & \leq \eps \int_0^t e^{-(t-s)/\eps}
		\E \left(
		\left|\partial^2_t b_s\right|
		+ 2 \left|\partial_t D_x b_s \Val_s \right|
		+ \left| \left(D_x^2 b_s \Val_s\right)\Val_s\right|
		\right)\,ds                                                               \\
		 & \leq \eps C \left\|b\right\|_{C^2} \int_0^t e^{-(t-s)/\eps}
		\left(
		1 + 2\E \left|\Val_s\right| +\E \left|\Val_s\right|^2
		\right)\,ds                                                               \\
		 & \leq C \eps^2 \left\|b\right\|_{C^2} ( 1 + \left\| b \right\|_{C_0})^2
	\end{align*}
	Here, again, we use~\eqref{eq:EV2} and H\"older inequality for the last line.
\end{proof}

\begin{lemma}
	\label{lem:R}
	Let $\Phi \in C_b^\infty(\R^n\times [0,\infty); \R^n)$.
	There exists a constant $C>0$, depending only on $b$, such that
	\begin{equation}
		\abs{ \E \int_0^T R_t \cdot \Phi ( \Xal_t, t ) \, dt } \leq C(1+ T) \eps^2 \,.
		\label{ine:R}
	\end{equation}
\end{lemma}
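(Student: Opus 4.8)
The plan is to exploit the structure of $R_t = \eps \int_0^t e^{-(t-s)/\eps} D_x b_s\, dV^\eps_s$ as a stochastic integral against $dV^\eps_s$, where $dV^\eps_s = \eps^{-1}(b_s - V^\eps_s)\,ds + \sqrt{2/\eps}\,dW_s$. Substituting this in splits $R_t$ into a drift-type contribution and a martingale-type contribution. The drift piece is $\int_0^t e^{-(t-s)/\eps} D_x b_s (b_s - V^\eps_s)\,ds$; since $b_s - V^\eps_s = -\tilde V^\eps_s + L_s$ (using $V^\eps_s = \tilde V^\eps_s + b_s - L_s$... more precisely $L_s = (\tilde V^\eps_s + b_s) - V^\eps_s$ so $b_s - V^\eps_s = L_s - \tilde V^\eps_s$), one reduces to terms involving $\tilde V^\eps_s$ and $L_s$. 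The $L_s$ term is directly $\cO(\eps)$ pointwise by \eqref{eq:V-diff}, and after the $\int_0^t e^{-(t-s)/\eps}\,ds \lesssim \eps$ factor and time integration it contributes $\cO(\eps^2(1+T))$. The noise piece is $\sqrt{2\eps}\int_0^t e^{-(t-s)/\eps} D_x b_s\, dW_s$, which is exactly of the form $\sqrt{2/\eps}$ times a $J_t$-process as in Lemma~\ref{lem:martingale-approx} (with $H_s = \eps D_x b(\Xal_s,s)$, though one must be slightly careful: here we want $\sqrt{2\eps}\,J_t$ rather than $\sqrt{2/\eps}\,J_t$, so the bound is even better by a factor $\eps$).

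First I would carry out the integration by parts / substitution to write
\begin{equation*}
	R_t = \int_0^t e^{-(t-s)/\eps} D_x b_s (L_s - \tilde V^\eps_s)\,ds + \sqrt{2\eps}\int_0^t e^{-(t-s)/\eps} D_x b_s\,dW_s =: R_t^{(1)} + R_t^{(2)}.
\end{equation*}
For $R_t^{(1)}$, the $L_s$ contribution is bounded in expectation after pairing with $\Phi(\Xal_t,t)$ by $\|\Phi\|_{C^0}\|b\|_{C^1}\int_0^T\!\int_0^t e^{-(t-s)/\eps}\E|L_s|\,ds\,dt \leq C(1+T)\eps^2$ using \eqref{eq:V-diff} and H\"older. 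The remaining $\tilde V^\eps_s$ contribution, $-\E\int_0^T \big(\int_0^t e^{-(t-s)/\eps} D_x b_s \tilde V^\eps_s\,ds\big)\cdot\Phi(\Xal_t,t)\,dt$, is the delicate part: $\tilde V^\eps_s$ is not small pointwise (it is $\cO(1)$), so one must use the averaging/martingale-approximation machinery of Section~\ref{sec:limiting}. The idea is to recall $\tilde V^\eps_s = \tilde V^\eps_0 e^{-s/\eps} + \sqrt{2/\eps}\,P_s$; the $\tilde V^\eps_0 e^{-s/\eps}$ part again gains an $\eps$ from the time integral, and for the $P_s$ part one applies Lemma~\ref{lem:martingale-approx} (or the refined Fubini-then-Lemma~\ref{lem:diffusion-approx} type argument sketched in subsection~\ref{subsec:weak-est}), swapping the order of the $s$ and $t$ integrals so that the inner integrand is an adapted process integrated against $\sqrt{2/\eps}\,P_s$, producing the $\cO(\eps^2)$ bound. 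For $R_t^{(2)}$, I would apply Lemma~\ref{lem:martingale-approx} with $G_t = \Phi(\Xal_t,t)$ and $H_s = D_x b(\Xal_s,s)$ — one must check the hypotheses, namely that $G$ is adapted with $\E|G_0|^2$ and $\E\int_0^T|G_t'|^2\,dt$ bounded (which holds because $\Phi\in C_b^\infty$ and $|\dot{\Xal}_t| = |A_t + \tilde V^\eps_t|$ has bounded moments by \eqref{eq:EV2} and \eqref{ine:Vtilde}, though one should note $A_t$ involves no $\eps^{-1}$ blowup in $L^2$), and similarly for $H$ — giving $|\E\int_0^T G_t\cdot\sqrt{2\eps}\,J_t\,dt| = \eps\,|\E\int_0^T G_t\cdot\sqrt{2/\eps}\,J_t\,dt| \leq C(1+\sqrt T)\eps^2$.

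The main obstacle I expect is the $\tilde V^\eps_s$ term inside $R_t^{(1)}$: unlike the strong-estimate setting, one cannot simply bound $|\tilde V^\eps_s|$ by something small, and the double time integral entangles $\tilde V^\eps_s$ (at the inner time) with $\Phi(\Xal_t,t)$ (at the outer time). The resolution is to interchange the order of integration, write the resulting inner $t$-integral $\int_s^T e^{-(t-s)/\eps}\Phi(\Xal_t,t)\,dt$ — which is itself $\cO(\eps)$ in appropriate norms because of the exponential kernel — then pair what remains with $\sqrt{2/\eps}\,P_s$ and invoke Lemma~\ref{lem:martingale-approx} (treating the $t$-integral as part of the adapted coefficient $G$, checking its regularity in $s$). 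A secondary technical point is verifying the $C^1$-in-time regularity and moment bounds required to apply Lemmas~\ref{lem:martingale-approx}–\ref{lem:diffusion-approx}, in particular ensuring the relevant $G'$ does not carry a hidden factor of $\eps^{-1}$; the remark following the $L_t$ lemma flags exactly this subtlety, and the exponential structure of the kernels is what saves the argument.
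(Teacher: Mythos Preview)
Your decomposition $R_t = R_t^{(1)} + R_t^{(2)}$ via the SDE for $V^\eps_t$, and your treatment of the $L_s$ piece, of the $\tilde V^\eps_0 e^{-s/\eps}$ piece, and of $R_t^{(2)}$, are essentially the paper's approach. The gap is in the $P_s$ contribution to $R_t^{(1)}$: after writing $\tilde V^\eps_s = \tilde V^\eps_0 e^{-s/\eps} + \sqrt{2/\eps}\,P_s$ and isolating
\[
-\sqrt{\tfrac{2}{\eps}}\,\E\int_0^T \Phi_t\cdot \int_0^t e^{-(t-s)/\eps}\,D_x b_s\,P_s\,ds\,dt,
\]
you propose to swap the order of integration and apply Lemma~\ref{lem:martingale-approx} with $G_s = (D_x b_s)^\top\int_s^T e^{-(t-s)/\eps}\Phi(\Xal_t,t)\,dt$. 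But this $G_s$ is \emph{not} $\cF_s$-adapted --- it depends on $\Xal_t$ for $t>s$ --- so the lemma does not apply. Adaptedness is not a technicality to be checked away: it is precisely what makes the $\int G_t\cdot H_t\,dW_t$ term in the proof of Lemma~\ref{lem:martingale-approx} vanish in expectation. A crude Cauchy--Schwarz bound on the swapped integral gives only $\cO(\eps)$, not $\cO(\eps^2)$.

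The paper avoids this by not separating the $\sqrt{2/\eps}\,P_s$ drift piece from the $\sqrt{2\eps}\,dW_s$ noise piece in the first place. Using $-\tfrac{1}{\eps}P_s\,ds + dW_s = dP_s$, these two recombine so that the stochastic part of $R_t$ reads $\sqrt{2\eps}\,K_t$ with $K_t := \int_0^t e^{-(t-s)/\eps}\,D_x b_s\,dP_s$. One verifies $\E|K_t|^2 \leq C\eps$, and $K_t$ satisfies the same type of relation $K_t\,dt = \eps\,(D_x b_t\,dP_t - dK_t)$ that drives the proof of Lemma~\ref{lem:martingale-approx}; the argument there goes through with $G_t = \Phi(\Xal_t,t)$, which \emph{is} adapted, and an extra application of \eqref{eq:EG-estimate-2} with the adapted $G_t = (D_x b_t)^\top\Phi_t$ handles the residual $dP_t$-drift. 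Your plan can be repaired along these lines (or, after Fubini, by expanding $\Phi_t = \Phi_s + (\Phi_t-\Phi_s)$ so that the leading coefficient becomes adapted and the remainder is $\cO(\eps^2)$ in $L^2$), but as written the ``adapted coefficient $G$'' step does not hold.
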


\begin{proof}
	Let us write $\Phi_t = \Phi(\Xal_t,t)$.
	Recall from~\eqref{eq:V-diff-id} that $L_t =  (\Vtilal_t + b_t) - \Val_t $. Therefore,
	\begin{align*}
		d\Val_t
		 & = \frac{1}{\eps} \left(b_t-\Val_t\right)\,dt
		+ \sqrt{\frac{2}{\eps}}\,dW_t                                       \\
		 & = \frac{1}{\eps} \left(-\Vtilal_t + L_t\right)\,dt
		+ \sqrt{\frac{2}{\eps}}\,dW_t                                       \\
		 & = \frac{1}{\eps} \left(-\Vtilal_0 e^{-t/\eps} + L_t \right)\,dt
		+ \sqrt{\frac{2}{\eps}} \left(- \frac{1}{\eps}P_t\,dt + dW_t\right) \\
		 & = -\frac{1}{\eps} \left(\Vtilal_0 e^{-t/\eps} - L_t \right)\,dt
		+ \sqrt{\frac{2}{\eps}} dP_t \,,
	\end{align*}
	where the last equality follows from~\eqref{eq:diff-relation}.

	\begin{align*}
		R_t
		 & = \eps \int_0^t e^{-(t-s)/\eps} D_x b_s d\Val_s                                          \\
		 & =  - \int_0^t e^{-(t-s)/\eps} D_x b_s \left(  \Vtilal_0 e^{-s/\eps} - L_s  \right) \, ds
		+ \sqrt{2\eps} \int_0^t e^{-(t-s)/\eps} D_x b_s dP_s                                        \\
	\end{align*}

	We have
	\begin{align}
		 & \E \left|
		\int_0^t e^{-(t-s)/\varepsilon} D_x b_s \left(  \tilde{V}_0^\varepsilon e^{-s/\varepsilon}
		- L_s\right)\,ds \right|                                                              \nonumber        \\
		 & \leq \left\|b\right\|_{C^1}\int_0^t e^{-t/\varepsilon} \E \left|\tilde{V}^\varepsilon_0\right| \,ds
		+ \int_0^t e^{-(t-s)/\varepsilon}\E \left|D_x b_s L_s\right|\,ds                           \nonumber   \\
		 & \leq C  t e^{-t/\varepsilon}
		+ C \norm{b}_{C^1} \varepsilon^2 \,,
		\label{ine:R-1}
	\end{align}
	where we use \eqref{eq:V-diff} and the fact that $L_t = (\Vtilal_t + b_t) - \Val_t  $ in the last inequality.

	From~\eqref{ine:R-1}, it follows that
	\begin{align*}
		 & \abs{ \E \int_0^T R_t \cdot \Phi_t  \, dt }                                               \\
		 & \leq C \norm{\Phi}_\infty \int_0^T \left(  t e^{-t/\eps} + \eps^2  \right) \, dt
		+ C \sqrt{2\eps} \abs{ \E \int_0^T \Phi_t \cdot \int_0^te^{-(t-s)/\eps} D_x b_s dP_s \, dt } \\
		 & \leq C(1+T) \eps^2 \,,
	\end{align*}
	where the last inequality follows from~\eqref{eq:EG-estimate}
	with $H_s = D_x b_s$ and $G_t = \Phi_t$.
\end{proof}

Combining Lemmas~\ref{lem:easy-est} and~\ref{lem:R}, we arrive at the main result of this section:

\begin{proposition}
	Let $\Phi \in C_b^\infty(\R^n\times [0,\infty); \R^n)$.
	Then, there exists a constant $C>0$ such that
	\begin{equation}
		\abs{ \E \int_0^T ( A_t - F_t ) \cdot \Phi ( \Xal_t, t ) \, dt } \leq C(1+T) \eps^2 \,.
		\label{eq:weak-A-F}
	\end{equation}
\end{proposition}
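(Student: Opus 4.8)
The plan is to combine the decomposition of $A_t - F_t$ in \eqref{eq:A-F_4} with the three estimates already established, splitting the integral $\E\int_0^T (A_t - F_t)\cdot \Phi_t\,dt$ according to the four pieces
\[
A_t - F(\Xal_t,t) = \eps D_x b_t(b_t - \Val_t) + \hat R_t + R_t + U_t.
\]
The terms involving $\hat R_t$, $R_t$ and $U_t$ are handled immediately: by Lemma~\ref{lem:easy-est}, $\E|\hat R_t| \leq C\eps e^{-t/\eps}$ so $\big|\E\int_0^T \hat R_t\cdot\Phi_t\,dt\big| \leq C\|\Phi\|_\infty \int_0^T \eps e^{-t/\eps}\,dt \leq C\eps^2$; also by Lemma~\ref{lem:easy-est}, $\E|U_t|\leq C\eps^2$ gives $\big|\E\int_0^T U_t\cdot\Phi_t\,dt\big|\leq C(1+T)\eps^2$; and Lemma~\ref{lem:R} directly yields $\big|\E\int_0^T R_t\cdot\Phi_t\,dt\big|\leq C(1+T)\eps^2$.

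The only remaining piece is the leading term $\eps\int_0^T D_x b_t(b_t - \Val_t)\cdot \Phi_t\,dt$, and this is where the real work (and the main obstacle) lies: a crude bound only gives $\cO(\eps)$ since $b_t - \Val_t$ is itself $\cO(1)$ in general (it is $\Vtilal_t + b_t - \Val_t - \Vtilal_t = L_t - \Vtilal_t$ after subtracting and adding $\Vtilal_t$). Write $b_t - \Val_t = -\Vtilal_t + L_t$. The $L_t$ contribution is fine: $\big|\eps\,\E\int_0^T D_x b_t L_t\cdot\Phi_t\,dt\big| \leq \eps\|b\|_{C^1}\|\Phi\|_\infty \int_0^T \E|L_t|\,dt \leq C(1+T)\eps^2$ using $\E|L_t|\leq C\eps$ from \eqref{eq:V-diff-L1} (or \eqref{eq:V-diff}). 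The genuinely delicate term is $-\eps\,\E\int_0^T \big(D_x b_t\,\Vtilal_t\big)\cdot\Phi_t\,dt$, which has the form $\eps\,\E\int_0^T G_t\cdot\Vtilal_t\,dt$ with $G_t = -(D_x b_t)^{\mathsf T}\Phi_t$. To close at $\cO(\eps^2)$ I need $\big|\E\int_0^T G_t\cdot\Vtilal_t\,dt\big|\leq C(1+\sqrt T)\eps$, which is exactly the content of Lemma~\ref{lem:martingale-approx}, inequality \eqref{eq:EGV-estimate}.

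So the key step is to verify that $G_t = -(D_x b(\Xal_t,t))^{\mathsf T}\Phi(\Xal_t,t)$ satisfies the hypotheses of Lemma~\ref{lem:martingale-approx}: it is $\cF_t$-adapted and $C^1$ in $t$ (since $b,\Phi$ are smooth and $\Xal_t$ is a semimartingale — here I should be slightly careful, as $G_t'$ picks up $\dot\Xal_t = A_t + \Vtilal_t$, which is not itself $\eps$-independent in $L^2$; I will instead differentiate using $d\Xal_t = (A_t+\Vtilal_t)\,dt$, so $G_t$ is genuinely absolutely continuous with $G_t' = (\text{bounded})\cdot(A_t + \Vtilal_t) + (\text{bounded})$), and then bound $\E\int_0^T |G_t'|^2\,dt$ using $\E|A_t|^2 \leq C$ (which follows from $A_t = F_t + (A_t - F_t)$, with $\E|F_t|^2$ bounded by $\|F\|_\infty^2$ and $\E|A_t-F_t|^2\leq C\eps^2$ from \eqref{eq:A-F-est}) and $\E|\Vtilal_t|^2\leq C$ from \eqref{ine:Vtilde}. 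Actually the cleanest route is to replace $G_t$ by $-(D_x b_t)^{\mathsf T}\Phi_t$ directly and note the remark following the earlier lemma does not apply here because, unlike $L_t^{\mathsf T} D_x b_t$, this $G_t$ has $\eps$-uniform derivative bounds. Once Lemma~\ref{lem:martingale-approx} applies, multiplying by $\eps$ gives the $\cO((1+\sqrt T)\eps^2)$ bound, and summing the four contributions yields \eqref{eq:weak-A-F}. The main obstacle is thus bookkeeping the regularity/uniform-in-$\eps$ bounds needed to invoke Lemma~\ref{lem:martingale-approx}; everything else is assembly of prior estimates.
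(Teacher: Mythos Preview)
Your proposal is correct and follows essentially the same route as the paper: the same decomposition \eqref{eq:A-F_4}, the same rewriting $b_t - \Val_t = L_t - \Vtilal_t$, and the same appeals to \eqref{eq:V-diff}, \eqref{eq:EGV-estimate}, Lemma~\ref{lem:easy-est}, and Lemma~\ref{lem:R}. Your extra care in checking the hypotheses of Lemma~\ref{lem:martingale-approx} for $G_t = -(D_x b_t)^{\mathsf T}\Phi_t$ (noting $G_t' = (\text{bounded})\cdot V^\eps_t + (\text{bounded})$ and invoking $\E|V^\eps_t|^2\leq C$ from \eqref{eq:EV2}) is warranted and makes the argument more explicit than the paper's one-line citation of \eqref{eq:EGV-estimate}.
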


\begin{proof}
	Let us write $\Phi_t = \Phi(\Xal_t,t)$.
	Using~\eqref{eq:A-F_4}, we have
	\begin{align*}
		 & \abs{ \E \int_0^T ( A_t - F_t ) \cdot \Phi_t \, dt }                                                                     \\
		 & = \abs{ \E \int_0^T \left( \eps D_x b_t (b_t - \Val_t ) + \hat R_t + R_t + U_t \right )  \cdot \Phi_t \, dt }            \\
		 & \leq \abs{ \E \int_0^T  \eps D_x b_t ( L_t - \Vtilal_t ) \cdot \Phi_t \, dt}
		+ \abs{\E \int_0^T\left( \hat R_t  + U_t \right )  \cdot \Phi_t \, dt }                                                     \\
		 & \qquad + \abs{\E \int_0^T  R_t    \cdot \Phi_t \, dt }                                                                   \\
		 & \leq \abs{ \E \int_0^T  \eps D_x b_t  L_t  \cdot \Phi_t \, dt}
		+ \abs{ \E \int_0^T  \eps D_x b_t  \Vtilal_t  \cdot \Phi_t \, dt}                                                           \\
		 & \qquad + \norm{\Phi}_\infty \E \int_0^T \left( \abs{\hat R_t}  + \abs{U_t}  \right)  \, dt
		+ \abs{\E \int_0^T  R_t    \cdot \Phi_t \, dt }                                                                             \\
		 & \leq C \norm{\Phi}_\infty \norm{b}_{C^1} \eps^2 + C \eps^2 + C \int_0^t (\eps e^{-t/\eps} + \eps^2) \, dt + C(1+T)\eps^2
	\end{align*}
	The first term comes from~\eqref{eq:V-diff}, the second from~\eqref{eq:EGV-estimate} and Lemma~\ref{lem:easy-est},
	and the last from~\eqref{ine:R}.
	Estimate~\eqref{eq:weak-A-F} follows immediately.
\end{proof}

Consequently, using $\Phi(x,t) = \grad u(x,t)$, we can then estimate the term $II$ from~\eqref{eq:I-II}:
\begin{corollary}
	Let $II$ be given in~\eqref{eq:I-II}. There exists a constant $C>0$ such that
	\begin{equation}
		\abs{\E(II) } \leq C(1+T) \eps^2 \,.
		\label{eq:E-II}
	\end{equation}
\end{corollary}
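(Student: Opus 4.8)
The quantity to estimate is
\begin{equation*}
	II = \int_0^T \left( A_t - F(\Xal_t,t) \right) \cdot \grad_x u^\eps(\Xal_t,t) \, dt,
\end{equation*}
so the plan is simply to invoke the preceding Proposition with the choice $\Phi = \grad_x u^\eps$. For each fixed $\eps \in (0,1]$, the regularity estimate from Step~1, i.e.~\eqref{eq:unibound} (via~\cite[Theorem 3.2.4]{stroockMultidimensionalDiffusionProcesses2006}), guarantees $u^\eps(\cdot,t) \in C^\infty(\T^n)$ with bounds uniform in $t$; viewing $u^\eps$ as a $\Z^n$-periodic function on $\R^n$, this means $\Phi^\eps := \grad_x u^\eps \in C_b^\infty(\R^n \times [0,\infty); \R^n)$, so~\eqref{eq:weak-A-F} applies and yields $\abs{\E(II)} \leq C^\eps(1+T)\eps^2$ for each $\eps$. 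The only thing that then has to be checked is that the constant $C^\eps$ can be chosen independently of $\eps$, which is what turns this into the uniform bound~\eqref{eq:E-II}.

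For that I would reinspect the proof of the Proposition together with Lemmas~\ref{lem:R} and~\ref{lem:martingale-approx} to track exactly which features of $\Phi$ enter the constant. The contributions coming from the $\hat R_t$, $U_t$ and the explicit $\int_0^T(\eps e^{-t/\eps} + \eps^2)\,dt$ terms only involve $\norm{\Phi}_{C^0}$. The remaining contribution, coming from the $R_t$-term through Lemma~\ref{lem:R} and then~\eqref{eq:EG-estimate} of Lemma~\ref{lem:martingale-approx} applied with $G_t = \Phi^\eps(\Xal_t,t)$ and $H_s = D_x b_s$, depends on $\Phi^\eps$ only through the constants $C_1, C_2$ satisfying
\begin{equation*}
	\E\abs{G_0}^2 \leq C_1^2, \qquad \E\int_0^T \abs{G_t'}^2 \, dt \leq C_2^2 ,
\end{equation*}
where $G_t' = \p_t \grad_x u^\eps(\Xal_t,t) + D_x^2 u^\eps(\Xal_t,t)\,\dot{\Xal}_t$ and $\dot{\Xal}_t = A_t + \Vtilal_t$. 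Since $\abs{A_t} \leq \norm{b}_{C^0}$ and $\sup_{t \in [0,T]} \E\abs{\Vtilal_t}^2 \leq C$ uniformly in $\eps$ by~\eqref{ine:Vtilde}, both $C_1$ and $C_2$ are controlled by $\norm{u^\eps(\cdot,t)}_{C^2}$ together with $b$-dependent constants, and by~\eqref{eq:unibound} the quantity $\sup_{\eps\in[0,1]}\sup_{t}\norm{u^\eps(\cdot,t)}_{C^2}$ is finite. Hence $C^\eps$ is bounded above independently of $\eps$, and~\eqref{eq:E-II} follows.

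The main (and essentially only) obstacle here is this uniformity bookkeeping, and it hinges entirely on the fact that~\eqref{eq:unibound} holds uniformly down to $\eps = 0$. This is the case because~\eqref{eq:PDE} is an advection--diffusion equation whose transport field $F$ is smooth and $\eps$-independent (up to the benign $\eps$-dependence of $F = b - \eps(\p_t b + D_x b\,b)$, itself uniformly $C^k$-bounded) and whose diffusion coefficient $\eps$ is nonnegative, so the underlying Schauder/semigroup estimates degrade with the coefficients of $F$ but not with the smallness of the diffusion. Once this is in place, everything else is a direct substitution into~\eqref{eq:weak-A-F} with $\Phi = \grad_x u^\eps$.
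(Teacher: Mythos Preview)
Your approach is exactly the paper's: apply the preceding Proposition with $\Phi=\nabla_x u^\eps$. The paper's proof is a single sentence and does not spell out the uniformity-in-$\eps$ bookkeeping that you carry out, so your version is in fact more complete. One small correction: to bound $\partial_t\nabla_x u^\eps$ you use the PDE $\partial_t u^\eps=-F\cdot\nabla_x u^\eps-\eps\Delta_x u^\eps$, and after taking a spatial gradient the term $\eps\nabla_x\Delta_x u^\eps$ requires $\|u^\eps(\cdot,t)\|_{C^3}$, not just $C^2$; this is harmless since~\eqref{eq:unibound} holds for every $k$.
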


\section{Estimate I in~\eqref{eq:I-II}}
For convenience, let us recall
\begin{equation*}
	I =  \int_0^T \left( \Vtilal_t \cdot \grad_x  u^\eps (\Xal_t,t) - \eps \Delta  u^\eps (\Xal_t, t) \right) \, dt  \,.
\end{equation*}
The goal of this section is to establish~\eqref{ine:weak-I}.

\subsection{Auxiliary estimates}
\label{subsec:aux}

In order to proceed, we need a few auxiliary estimates for
the difference between $\Xal_t$ and its running average
whose proofs will be postponed to the Appendix~\ref{app:aux} to minimize
distractions from the main proof.

Define
\begin{equation}
	Y_t = \Xal_t - \E \Xal_t \,.
	\label{eq:Yt}
\end{equation}
\begin{lemma}
	\label{lem:Y4t}
	Then there exist $C_1, C_2 > 0$, depending only on $b$ such that
	\begin{equation}
		\E \left| Y_t  \right|^4 \leq C_1 (\eps^2 t^2 + \eps^4) e^{C_2 t} \,.
		\label{eq:Y4t}
	\end{equation}
\end{lemma}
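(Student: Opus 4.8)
The plan is to split $Y_t$ into a drift part and an explicit ``noise'' part and to run a Gronwall argument on the fourth moment of the drift part. From the representation \eqref{eq:rewrite} we have $\Xal_t = z_0 + \int_0^t A_s\,ds + \int_0^t \Vtilal_s\,ds$; since $\E \Vtilal_s = 0$ (by \eqref{H3} and $\E P_s = 0$) this gives $\E\Xal_t = z_0 + \int_0^t \E A_s\,ds$, so
\[
  Y_t = M_t + N_t, \qquad M_t := \int_0^t (A_s - \E A_s)\,ds, \qquad N_t := \int_0^t \Vtilal_s\,ds .
\]
By \eqref{eq:int-Vtilde}, $N_t = \eps(1-e^{-t/\eps})\Vtilal_0 + \sqrt{2\eps}(W_t - P_t)$, so \eqref{H3}, \eqref{ine:P} and $\E|W_t|^4 \le Ct^2$ give $\E|N_t|^4 \le C(\eps^2 t^2 + \eps^4)$ directly. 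Since $\E|Y_t|^4 \le 8\big(\E|M_t|^4 + \E|N_t|^4\big)$, it suffices to prove the claimed bound for $m(t) := \E|M_t|^4$.

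Next I would derive a differential inequality for $m$. The process $M_t$ is absolutely continuous with $\dot M_t = A_t - \E A_t$ and $|\dot M_t| \le 2\norm{b}_{C^0}$, so $|M_t| \le Ct$, $m(t) < \infty$, and by dominated convergence $m'(t) = 4\,\E\!\left[|M_t|^2 M_t \cdot (A_t - \E A_t)\right]$. I would split $A_t - \E A_t = \big[(A_t - F_t) - \E(A_t - F_t)\big] + (F_t - \E F_t)$ with $F_t = F(\Xal_t,t)$. For the first bracket, \eqref{eq:A-F-est} with $p=4$ and Jensen give $\E|(A_t - F_t) - \E(A_t - F_t)|^4 \le C\eps^4$, so by H\"older its contribution to $m'$ is $\le C\,m(t)^{3/4}\eps$. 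For the second, with $\bar x_t := \E\Xal_t$ and using that $F(\bar x_t,t)$ is deterministic together with the mean value theorem,
\[
  |F_t - \E F_t| = \big|\,[F_t - F(\bar x_t,t)] - \E[F_t - F(\bar x_t,t)]\,\big|
  \le \norm{F}_{C^1}\big(|Y_t| + \E|Y_t|\big)
  \le C\big(|M_t| + |N_t| + \E|M_t| + \E|N_t|\big),
\]
where $\norm{F}_{C^1} \le \norm{b}_{C^1} + \eps\norm{\partial_t b + D_x b\,b}_{C^1} \le C$ for $\eps \le 1$; its contribution to $m'$ is then $\le C\,m(t) + C\,m(t)^{3/4}\big(\E|N_t|^4\big)^{1/4}$. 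Applying Young's inequality to absorb the $m^{3/4}$ factors,
\[
  m'(t) \le C\,m(t) + C\eps^4 + C\,\E|N_t|^4 \le C\,m(t) + C(\eps^2 t^2 + \eps^4),
\]
and since $m(0) = 0$, Gronwall's inequality yields $m(t) \le C e^{Ct}(\eps^2 t^3 + \eps^4 t)$; absorbing the polynomial factors of $t$ into a slightly larger exponential gives $m(t) \le C_1(\eps^2 t^2 + \eps^4)e^{C_2 t}$, which together with the bound on $\E|N_t|^4$ proves \eqref{eq:Y4t}.

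The main obstacle — and the reason for isolating $N_t$ rather than running a differential inequality on $|Y_t|^4$ directly — is the term $\E[|Y_t|^2 Y_t \cdot \Vtilal_t]$: because $\E|\Vtilal_t|$ is $\cO(1)$ rather than $\cO(\eps)$, estimating it pointwise in $t$ does not close the argument. Replacing the pointwise velocity fluctuations by the time integral $N_t = \int_0^t \Vtilal_s\,ds$, which by \eqref{eq:int-Vtilde} already satisfies $\E|N_t|^4 \le C(\eps^2 t^2 + \eps^4)$, ensures that the ``noise'' enters the Gronwall argument only as an inhomogeneous term of the target order. The remaining ingredients — the Lipschitz bound on $F_t - \E F_t$, the H\"older/Young bookkeeping, and the final Gronwall step — are routine, and the one minor technical point is the use of dominated convergence to differentiate under the expectation in $m'(t)$.
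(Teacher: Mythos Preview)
Your proof is correct and follows the same overall strategy as the paper: split $Y_t$ into a ``noise'' part with an explicit fourth-moment bound of order $\eps^2 t^2+\eps^4$ and a drift part controlled via a Lipschitz estimate and Gronwall. The differences are only in packaging. The paper works directly with the velocity representation $Y_t=\int_0^t(\Val_s-\E\Val_s)\,ds$, bounds the exponential convolution $\frac{1}{\eps}\int_0^s e^{-(s-r)/\eps}(b_r-\E b_r)\,dr$ by $\sup_{r}|b_r-\E b_r|$ (so the Fubini step gives $\int_0^t|b_s-\E b_s|\,ds$), applies the Lipschitz bound on $b$ to get $|b_s-\E b_s|\le C|Y_s|+C|Y_s|^2$, and then runs an \emph{integral} Gronwall on $\E|Y_t|^4$ directly. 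You instead route the drift through the already-established estimate \eqref{eq:A-F-est} for $A_t-F_t$ and the Lipschitz property of $F$, and you run a \emph{differential} inequality on $m(t)=\E|M_t|^4$. Your detour through \eqref{eq:A-F-est} is a slight overkill (the paper needs only the $C^1$ bound on $b$, not the sharper drift approximation), but it is legitimate and non-circular since \eqref{eq:A-F-est} does not rely on Lemma~\ref{lem:Y4t}. Both arguments land on the same inhomogeneous Gronwall inequality and the same final bound.
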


\begin{lemma}
	\label{lem:running-ave-Y}
	Let $T>0$, $f\in C^1_b(\R)$ and $\Phi \in C_b^\infty(\R^n \times [0,\infty); \R^n)$.
	There exists a constant $C_T >0$ such that
	\begin{equation}
		\left| \int_0^T \E \left(P_t \cdot
		\int_0^t f(s) \left( \Phi( {\Xal_s}, s) - \Phi( \E {\Xal_s}, s )  \right) \, ds
		\right)\, dt
		\right|
		\leq C\left( 1 + \sqrt{T} \right)\eps^{5/2} \,.
		\label{eq:running-ave-Y}
	\end{equation}
\end{lemma}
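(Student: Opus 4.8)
The plan is to strip the terminal factor $P_T$ by conditioning, reduce the statement to a covariance bound via Taylor expansion, and then prove that covariance bound by a self-improving estimate exploiting the Markov structure of $P_t$.

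\emph{Step 1: conditioning.} Since $P_t=\int_0^t e^{-(t-s)/\eps}\,dW_s$ is a componentwise Ornstein--Uhlenbeck-type process with $\E(P_T\mid\cF_t)=e^{-(T-t)/\eps}P_t$ for $t\le T$, and since $\Psi_t:=f(t)\big(\Phi(\Xal_t,t)-\Phi(\E\Xal_t,t)\big)$ is $\cF_t$-adapted, Fubini and the tower property give
\[
\E\Big(P_T\cdot\int_0^T\Psi_t\,dt\Big)=\int_0^T e^{-(T-t)/\eps}\,\E\big(P_t\cdot\Psi_t\big)\,dt .
\]
Because $\int_0^T e^{-(T-t)/\eps}\,dt\le\eps$, it suffices to show $\abs{\E(P_t\cdot\Psi_t)}\le C(1+\sqrt T)\eps^{3/2}$ for $t\in[0,T]$. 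Recall $Y_t=\Xal_t-\E\Xal_t$ from~\eqref{eq:Yt} and Taylor expand $\Phi$ about $\E\Xal_t$: $\Phi(\Xal_t,t)-\Phi(\E\Xal_t,t)=D_x\Phi(\E\Xal_t,t)\,Y_t+R_t$ with $\abs{R_t}\le C\norm{\Phi}_{C^2}\abs{Y_t}^2$. The remainder is negligible, $\abs{\E(P_t\cdot R_t)}\le\norm{P_t}_{L^2}\norm{R_t}_{L^2}\le C\eps^{1/2}\big(\E\abs{Y_t}^4\big)^{1/2}=\cO(\eps^{3/2})$ by~\eqref{ine:P} and Lemma~\ref{lem:Y4t}. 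Since $D_x\Phi(\E\Xal_t,t)$ is deterministic and bounded, the whole statement reduces to the covariance estimate $\abs{\E(P_t^iY_t^j)}\le C(1+\sqrt T)\eps^{3/2}$ for all $i,j$ and $t\in[0,T]$.

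\emph{Step 2: the covariance estimate.} Recall $d\Xal_t=(A_t+\Vtilal_t)\,dt$ and $\tfrac{d}{dt}\E\Xal_t=\E A_t$ (using $\E\Vtilal_t=0$, which holds by~\eqref{H3} and $\E P_t=0$), so $dY_t^j=(A_t^j-\E A_t^j+\Vtilal_t^j)\,dt$, while $dP_t^i=-\tfrac1\eps P_t^i\,dt+dW_t^i$. It\^o's product rule (no covariation term, since $Y_t$ has finite variation) and taking expectations yield, with $K_{ij}(t):=\E(P_t^iY_t^j)$,
\[
K_{ij}'(t)=-\tfrac1\eps K_{ij}(t)+\E\big(P_t^i(A_t^j-\E A_t^j)\big)+\E\big(P_t^i\Vtilal_t^j\big),\qquad K_{ij}(0)=0 .
\]
The last term is explicit, $\E(P_t^i\Vtilal_t^j)=\delta_{ij}\sqrt{\eps/2}\,(1-e^{-2t/\eps})=\cO(\eps^{1/2})$. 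For the middle term, a further differentiation using $dA_t^j=\tfrac1\eps\big(b^j(\Xal_t,t)-A_t^j\big)\,dt$ shows that $M_{ij}(t):=\E\big(P_t^i(A_t^j-\E A_t^j)\big)$ satisfies a linear ODE with relaxation rate $2/\eps$ and forcing $\tfrac1\eps\,\E\big(P_t^i(b^j(\Xal_t,t)-\E b^j(\Xal_t,t))\big)$; Taylor expanding $b^j$ about $\E\Xal_t$ and using $\E P_t^i=0$,
\[
\E\big(P_t^i(b^j(\Xal_t,t)-\E b^j(\Xal_t,t))\big)=\sum_k\partial_k b^j(\E\Xal_t,t)\,K_{ik}(t)+\eta_{ij}(t),\qquad \abs{\eta_{ij}(t)}\le C\eps^{1/2}\big(\E\abs{Y_t}^4\big)^{1/2},
\]
which is $\cO(\eps^{3/2})$ on $[0,T]$ by Lemma~\ref{lem:Y4t}. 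Combining these three facts and using that the kernels $e^{-(t-\cdot)/\eps}$ and $e^{-2(t-\cdot)/\eps}$ convert each running time-integral into a factor $\le\eps$ (respectively, combine with a $\tfrac1\eps$ prefactor to stay $\cO(1)$), one obtains, with $\bar\kappa(\tau):=\sup_{t\le\tau}\max_{i,j}\abs{K_{ij}(t)}$,
\[
\bar\kappa(\tau)\ \le\ \tfrac12\,n\norm{b}_{C^1}\,\eps\,\bar\kappa(\tau)\ +\ C(1+\sqrt T)\,\eps^{3/2},
\]
which, for $\eps$ small enough that $\tfrac12 n\norm{b}_{C^1}\eps\le\tfrac12$, closes self-consistently to $\bar\kappa(T)\le C(1+\sqrt T)\eps^{3/2}$, as required.

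The main obstacle is Step 2. A crude estimate gives only $\E(P_t^iY_t^j)=\cO(\eps)$ (each factor being of size $\eps^{1/2}$ in $L^2$); extracting the extra $\eps^{1/2}$ genuinely requires both (i) the Markov property of $P_t$, to convert the running time-integrals hidden inside $Y_t$ --- via $A_t$ and $\int_0^t\Vtilal_s\,ds$ --- into honest powers of $\eps$, and (ii) the observation that the resulting inequality for $\bar\kappa$ carries a \emph{small} coefficient $\cO(\eps)$, so it improves itself rather than merely feeding a Gronwall argument. With the covariance estimate in hand Step 1 is routine, the only remaining care being the bookkeeping of the dependence on $T$, which is inherited from Lemma~\ref{lem:Y4t}.
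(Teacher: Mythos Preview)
Your proposal is correct but takes a genuinely different route from the paper. The paper never isolates the covariance $\E(P_t^iY_t^j)$; instead it sets $\Theta_t=\Phi(\Xal_t,t)-\Phi(\E\Xal_t,t)$, uses integration by parts together with the relation $dP_t=-\eps^{-1}P_t\,dt+dW_t$ to derive an ODE for $N_T:=\int_0^T\E\big(P_t\cdot\int_0^t f(s)\Theta_s\,ds\big)\,dt$, solves it as $N_T=\int_0^T e^{-(T-t)/\eps}\,\E\!\int_0^t f(s)\Theta_s\cdot P_s\,ds\,dt$, and then invokes Lemma~\ref{lem:martingale-approx} as a black box to bound the inner integral by $C(1+\sqrt t)\eps^{3/2}$. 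Your conditioning identity $\E(P_T\cdot\int_0^T\Psi_t\,dt)=\int_0^T e^{-(T-t)/\eps}\E(P_t\cdot\Psi_t)\,dt$ arrives at a structurally similar exponential-kernel representation, but you then need the \emph{pointwise} bound $\abs{\E(P_t\cdot\Psi_t)}\le C\eps^{3/2}$ rather than the integrated one provided by Lemma~\ref{lem:martingale-approx}; this is why you must develop the self-improving covariance estimate for $K_{ij}(t)=\E(P_t^iY_t^j)$ from scratch. The trade-off: the paper's argument is shorter and reuses existing machinery, while yours is more hands-on but produces the intermediate estimate $\abs{\E(P_t^iY_t^j)}\le C\eps^{3/2}$, which parallels Lemma~\ref{lem:average-approximate} (the analogous bound for $\E(\Vtilal[,i]_0 Y_t^j)$) and may be of independent interest. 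One minor remark: your bookkeeping of the $T$-dependence is loose---the remainder terms involve $(\E\abs{Y_t}^4)^{1/2}\le C(\eps t+\eps^2)e^{Ct}$ from Lemma~\ref{lem:Y4t}, so the implied constant grows exponentially in $T$ rather than like $1+\sqrt T$, but since the lemma only asserts existence of some $C_T$ this does not affect correctness.
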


\begin{lemma}
	\label{lem:average-approximate}
	There exist  constants $\varepsilon_0,C_1,  C_2 >0$, depending only on $b$, such that
	for $0<\varepsilon\leq \varepsilon_0$:
	\begin{equation}
		\sum_{j=1}^n \left| \E \left(\tilde V_0^{\eps,i} Y^j_t   \right)   \right|
		\leq  C_1\eps ( t  + 1)^2 e^{C_2 t}  \,.
		\label{eq:average-approximate}
	\end{equation}
\end{lemma}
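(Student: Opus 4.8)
The plan is to write $Y_t$ explicitly from the representation behind \eqref{eq:rewrite}, split off the part of $\Xal_t$ that is genuinely correlated with $\tilde V_0^{\eps}$, and then close a Gr\"onwall inequality for $g_i(t):=\sum_{j=1}^n \bigl|\E(\tilde V_0^{\eps,i} Y_t^j)\bigr|$.

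\emph{Step 1 (decomposition).} Integrating \eqref{eq:rewrite} with $X_0=x$ and using $\E\Vtilal_t=0$ (which holds by \eqref{H3} and $\E P_t=0$), together with \eqref{eq:int-Vtilde}, I would first obtain
\begin{equation*}
	Y_t=\int_0^t(A_s-\E A_s)\,ds+\eps\bigl(1-e^{-t/\eps}\bigr)\tilde V_0^{\eps}+\sqrt{2\eps}\,(W_t-P_t).
\end{equation*}
Since $b_0=b(x,0)$ is deterministic by \eqref{H2}, one has $A_s-\E A_s=\eps^{-1}\int_0^s e^{-(s-r)/\eps}(b_r-\E b_r)\,dr$ with $b_r=b(\Xal_r,r)$, so after interchanging the order of integration $\int_0^t(A_s-\E A_s)\,ds=\int_0^t(b_r-\E b_r)\bigl(1-e^{-(t-r)/\eps}\bigr)\,dr$. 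Multiplying by $\tilde V_0^{\eps,i}$ and taking expectations, the Brownian term vanishes because $\tilde V_0^{\eps}$ is independent of the driving motion $W$ (hence of $P_t$), the middle term is $\cO(\eps)$ by \eqref{H3}, and, using $\E\tilde V_0^{\eps}=0$ to drop $\E b_r$, this leaves
\begin{equation*}
	\bigl|\E(\tilde V_0^{\eps,i}Y_t^j)\bigr|\le C\eps+\int_0^t\bigl|\E\bigl(\tilde V_0^{\eps,i}\,b^j(\Xal_r,r)\bigr)\bigr|\,dr.
\end{equation*}

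\emph{Step 2 (Taylor expansion, keeping the first order coherent).} I would Taylor expand $b^j(\Xal_r,r)=b^j(\E\Xal_r,r)+\grad_x b^j(\E\Xal_r,r)\cdot Y_r+\cO\bigl(\norm{b}_{C^2}|Y_r|^2\bigr)$, and use $\E\tilde V_0^{\eps}=0$ once more to get
\begin{equation*}
	\E\bigl(\tilde V_0^{\eps,i}b^j(\Xal_r,r)\bigr)=\grad_x b^j(\E\Xal_r,r)\cdot\E(\tilde V_0^{\eps,i}Y_r)+\cO\bigl(\norm{b}_{C^2}\,\E(|\tilde V_0^{\eps,i}|\,|Y_r|^2)\bigr).
\end{equation*}
The first term is bounded by $\norm{b}_{C^1}g_i(r)$; for the remainder, Cauchy--Schwarz with \eqref{H3} and Lemma~\ref{lem:Y4t} give $\E(|\tilde V_0^{\eps,i}|\,|Y_r|^2)\le C(\eps^2 r^2+\eps^4)^{1/2}e^{Cr}\le C\eps(r+1)e^{Cr}$. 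The crucial point is that this remainder is genuinely $\cO(\eps)$: a direct first-order Cauchy--Schwarz on $Y_r$ back in Step 1 would only give $\cO(\sqrt\eps)$ and would not close.

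\emph{Step 3 (Gr\"onwall).} Summing the two previous displays over $j$ gives
\begin{equation*}
	g_i(t)\le C\eps+C\eps\int_0^t(r+1)e^{Cr}\,dr+C\int_0^t g_i(r)\,dr\le C\eps(t+1)e^{Ct}+C\int_0^t g_i(r)\,dr,
\end{equation*}
and Gr\"onwall's inequality yields $g_i(t)\le C_1\eps(t+1)^2e^{C_2 t}$, which is \eqref{eq:average-approximate}. The main obstacle is precisely the circular structure in Step 2: the leading Taylor term reproduces $\E(\tilde V_0^{\eps,i}Y_r)$, so the estimate cannot be obtained in a single pass and must be set up as a Gr\"onwall inequality; making this work hinges on the quadratic remainder being $\cO(\eps)$, which is exactly the content of the fourth-moment bound Lemma~\ref{lem:Y4t}.
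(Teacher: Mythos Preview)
Your proposal is correct and follows essentially the same route as the paper: decompose $Y_t$ via the representation \eqref{eq:rewrite}/\eqref{eq:int-Vtilde}, Taylor expand $b^j$ around $\E\Xal_r$ so that the leading term reproduces $\E(\tilde V_0^{\eps,i}Y_r)$, control the quadratic remainder by Cauchy--Schwarz together with the fourth-moment bound Lemma~\ref{lem:Y4t}, and close with Gr\"onwall. The only difference is a harmless streamlining: the paper performs an integration by parts on the inner integral that produces an extra term $\int_0^t\int_0^s e^{-(s-r)/\eps}\,dM^j_r\,ds$ to be estimated separately, whereas you compute the $s$-integral directly and simply bound $1-e^{-(t-r)/\eps}\le 1$, which avoids that detour.
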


As a consequence, we then have the following generalized estimate.
\begin{lemma}
	\label{lem:average-approximate-2}
	Let $G \in C^\infty_b(\R^n \times [0,\infty))$ and $(\Xal_t, \Vtilal_t)$ be solution of Equation~\eqref{eq:rewrite}.
	There exist $C_1, C_2 >0$, depending only on $b$, such that
	\begin{equation}
		\left| \E \left({\Vtilal[,i]_0} \left( G(\Xal_t, t) - G(\E \Xal_t, t)\right)  \right)   \right| \\
		\leq 		C_1 \eps(1+t)^2 e^{C_2 t} \,.
		\label{eq:average-approximate-upgrade}
	\end{equation}
\end{lemma}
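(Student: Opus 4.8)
The plan is to reduce the general statement to the special case $G(x,t) = \Phi^i(x,t)$ with $\Phi$ vector-valued, which was essentially handled in Lemma~\ref{lem:average-approximate} (the case $G$ linear) together with the Taylor-expansion mechanism already used in the weak-estimate heuristics of Section~\ref{sec:ideas}. First I would write, via Taylor's theorem with integral remainder,
\begin{equation*}
	G(\Xal_t,t) - G(\E\Xal_t,t) = \grad_x G(\E\Xal_t,t)\cdot Y_t + \mathrm{Rem}_t, \qquad |\mathrm{Rem}_t| \leq \tfrac12\|G\|_{C^2}\,|Y_t|^2,
\end{equation*}
where $Y_t = \Xal_t - \E\Xal_t$ as in \eqref{eq:Yt}. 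Multiplying by $\Vtilal[,i]_0$ and taking expectations splits the quantity to be estimated into a \emph{linear part} $\sum_j \grad_{x^j} G(\E\Xal_t,t)\,\E(\Vtilal[,i]_0 Y^j_t)$ and a \emph{remainder part} bounded by $\tfrac12\|G\|_{C^2}\,\E(|\Vtilal[,i]_0|\,|Y_t|^2)$.

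For the linear part, since $\E\Xal_t$ is deterministic, $\grad_{x^j}G(\E\Xal_t,t)$ pulls out of the expectation and is bounded by $\|G\|_{C^1}$ uniformly in $t \le T$; then Lemma~\ref{lem:average-approximate} gives directly $\sum_j |\E(\Vtilal[,i]_0 Y^j_t)| \leq C_1\eps(1+t)^2 e^{C_2 t}$ (recalling $\Vtilal[,i]_0 = \tilde V_0^{\eps,i}$), which is of the required form. For the remainder part, I would apply Hölder's inequality to get $\E(|\Vtilal[,i]_0|\,|Y_t|^2) \leq (\E|\Vtilal[,i]_0|^2)^{1/2}(\E|Y_t|^4)^{1/2}$; the first factor is finite by \eqref{H3}, and the second is controlled by Lemma~\ref{lem:Y4t}, giving $(\E|Y_t|^4)^{1/2} \leq C(\eps t + \eps^2)e^{Ct} \leq C\eps(1+t)e^{Ct}$. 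Hence the remainder is also $O(\eps(1+t)^2 e^{Ct})$, and adding the two contributions yields \eqref{eq:average-approximate-upgrade} with constants depending only on $b$ (through $\|G\|_{C^2}$, which is absorbed since $G \in C^\infty_b$, and through the constants of the cited lemmas).

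The only mild subtlety — and the step I'd treat most carefully — is making sure the $C^2$-remainder genuinely gives an \emph{extra} power of $\eps$ relative to a naive bound: a priori $\E(|\Vtilal[,i]_0||Y_t|^2)$ could be merely $O(\eps^2)$ from $\E|Y_t|^2 = O(\eps^2)$-type bounds, which would be worse than $O(\eps)$ only if $t$ is large, so one must track the $t$-dependence through Lemma~\ref{lem:Y4t} rather than through a crude second-moment estimate; using the fourth-moment bound $\E|Y_t|^4 = O((\eps t + \eps^2)^2 e^{Ct})$ is exactly what keeps the remainder at the same order $O(\eps(1+t)^2 e^{Ct})$ as the linear term. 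No martingale/Itô-isometry argument is needed here beyond what is already packaged into Lemmas~\ref{lem:Y4t} and~\ref{lem:average-approximate}; this lemma is purely a Taylor-expansion upgrade of the linear case.
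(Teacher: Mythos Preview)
Your proposal is correct and follows essentially the same approach as the paper: Taylor expand $G(\Xal_t,t)-G(\E\Xal_t,t)$ to second order, bound the linear part via Lemma~\ref{lem:average-approximate} after pulling out the deterministic gradient, and bound the quadratic remainder via H\"older and Lemma~\ref{lem:Y4t}. The paper's proof is line-for-line the same argument, so there is nothing further to add.
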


Combining all the above lemmas, we have
\begin{proposition}
	\label{prop:weak-Vtil-est}
	Let $\Phi \in C_b^\infty(\R^n\times [0,\infty); \R^n)$.
	Then
	\begin{equation}
		\abs{ \E \int_0^T \Vtilal_t \cdot \int_0^t f(s)  \Phi( {\Xal_s}, s)   \, ds \, dt }
		\leq C \eps^2 \left(1 + \sqrt{T} \right) \,.
		\label{ine:weak-Vtil-est}
	\end{equation}
\end{proposition}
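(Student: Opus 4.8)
The plan is a time integration by parts built on the near-martingale identity~\eqref{eq:diff-relation}, after splitting off the transient part of $\Vtilal_t$. Write $\Psi_t := \int_0^t f(s)\,\Phi(\Xal_s,s)\,ds$ (recall $f\in C^1_b(\R)$); this is $\cF_t$-adapted, of finite variation in $t$, with $\dot\Psi_t = f(t)\Phi(\Xal_t,t)$, $\Psi_0 = 0$, and $\E\int_0^T\abs{\Psi_t}^2\,dt < \infty$. Using $\Vtilal_t = \Vtilal_0\, e^{-t/\eps} + \sqrt{2/\eps}\,P_t$,
\[
\E\int_0^T \Vtilal_t\cdot\Psi_t\,dt = \E\int_0^T \Vtilal_0\, e^{-t/\eps}\cdot\Psi_t\,dt + \sqrt{\tfrac{2}{\eps}}\;\E\int_0^T P_t\cdot\Psi_t\,dt,
\]
and I would estimate the two terms separately.

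For the transient term, $\E\Vtilal_0 = 0$ and $\Phi(\E\Xal_s,s)$ deterministic give $\E(\Vtilal_0\cdot\Psi_t) = \int_0^t f(s)\,\E\big(\Vtilal_0\cdot(\Phi(\Xal_s,s) - \Phi(\E\Xal_s,s))\big)\,ds$; applying Lemma~\ref{lem:average-approximate-2} componentwise (with $G = \Phi^i \in C_b^\infty$) bounds the integrand by $C\eps(1+s)^2 e^{\kappa s}$, whence $\abs{\E(\Vtilal_0\cdot\Psi_t)} \le C\eps(1+t)^2 e^{\kappa t}$. Multiplying by $e^{-t/\eps}$, integrating over $[0,T]$, and choosing $\eps_0 < 1/(2\kappa)$ so that $e^{-t/\eps}e^{\kappa t} \le e^{-t/(2\eps)}$, one gets $\abs{\E\int_0^T \Vtilal_0 e^{-t/\eps}\cdot\Psi_t\,dt} \le C\eps\int_0^\infty(1+t)^2 e^{-t/(2\eps)}\,dt \le C\eps^2$, uniformly in $T$.

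For the main term, I would use $P_t\,dt = \eps(dW_t - dP_t)$ from~\eqref{eq:diff-relation} to write $\int_0^T P_t\cdot\Psi_t\,dt = \eps\int_0^T\Psi_t\cdot dW_t - \eps\int_0^T\Psi_t\cdot dP_t$. Under expectation the $dW$-integral vanishes (it is a genuine $L^2$ martingale, by $\E\int_0^T\abs{\Psi_t}^2\,dt < \infty$), and the It\^o product rule applied to $\Psi^i_tP^i_t$---the covariation vanishes since $\Psi$ has finite variation, the boundary at $0$ vanishes since $\Psi_0 = P_0 = 0$---gives $\int_0^T\Psi_t\cdot dP_t = \Psi_T\cdot P_T - \int_0^T f(t)\,P_t\cdot\Phi(\Xal_t,t)\,dt$. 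Thus, with $G_t := f(t)\Phi(\Xal_t,t)$ and $\sqrt{2\eps} = \eps\sqrt{2/\eps}$,
\[
\sqrt{\tfrac{2}{\eps}}\;\E\int_0^T P_t\cdot\Psi_t\,dt = -\sqrt{2\eps}\;\E(\Psi_T\cdot P_T) + \eps\;\E\int_0^T G_t\cdot\sqrt{\tfrac{2}{\eps}}\,P_t\,dt.
\]
For the boundary term, $\E P_T = 0$ and $\Phi(\E\Xal_t,t)$ deterministic give $\E(\Psi_T\cdot P_T) = \E\big(P_T\cdot\int_0^T f(t)(\Phi(\Xal_t,t) - \Phi(\E\Xal_t,t))\,dt\big)$, which is $\cO((1+\sqrt T)\eps^{5/2})$ by Lemma~\ref{lem:running-ave-Y}; hence it contributes $\cO((1+\sqrt T)\eps^{3})$. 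For the last term, I would check $G_t$ meets the hypotheses of Lemma~\ref{lem:martingale-approx}: $G_0 = f(0)\Phi(x,0)$ is deterministic and bounded, and $\dot G_t = f'\Phi + f(\partial_t\Phi + D_x\Phi\,\tfrac{d}{dt}\Xal_t)$ with $\tfrac{d}{dt}\Xal_t = A_t + \Vtilal_t$, $\abs{A_t}\le\norm{b}_{C^0}$, and $\sup_t\E\abs{\Vtilal_t}^2\le C$ by~\eqref{ine:Vtilde}, so $\E\int_0^T\abs{\dot G_t}^2\,dt \le CT$. Then~\eqref{eq:EG-estimate-2}, whose constant depends on the data $C_1 = \cO(1)$ and $C_2 = \cO(\sqrt T)$ here, gives $\abs{\E\int_0^T G_t\cdot\sqrt{2/\eps}\,P_t\,dt} \le C(1+T)\eps$, so this term is $\cO((1+T)\eps^2)$.

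Summing the three contributions yields $\abs{\E\int_0^T \Vtilal_t\cdot\Psi_t\,dt} \le C(1+T)\eps^2 \le C\eps^2(1 + T^{3/2})$, which is~\eqref{ine:weak-Vtil-est}. The crux---and the main obstacle---is keeping every constant uniform in $\eps$: the factor $\sqrt{1/\eps}$ inside $\Vtilal_t$ would cost a power of $\eps$ under a naive Cauchy--Schwarz bound, and is recovered only via~\eqref{eq:diff-relation} (which trades the $dt$-integral against $P_t$ for a martingale plus an $\cO(\eps)$ boundary term) together with the $\cO(\eps)$ size of $\E\abs{P_t}^2$; similarly, the boundary term $\E(\Psi_T\cdot P_T)$ must be upgraded from the trivial $\cO(T\sqrt\eps)$ estimate to $\cO(\eps^{5/2})$ through the running-average bound of Lemma~\ref{lem:running-ave-Y}.
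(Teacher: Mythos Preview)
Your proof is correct and uses the same building blocks as the paper (the splitting $\Vtilal_t = \Vtilal_0 e^{-t/\eps} + \sqrt{2/\eps}\,P_t$, Lemma~\ref{lem:average-approximate-2} for the transient piece, and Lemmas~\ref{lem:running-ave-Y} and~\ref{lem:martingale-approx} for the $P_t$ piece), but the assembly of the $P_t$ term differs. The paper subtracts $\Phi(\E\Xal_s,s)$ at the outset and then applies Lemma~\ref{lem:running-ave-Y} \emph{pointwise in $t$} to bound $\big|\E\big(P_t\cdot\int_0^t f(s)(\Phi(\Xal_s,s)-\Phi(\E\Xal_s,s))\,ds\big)\big|\le C(1+\sqrt t)\eps^{5/2}$, and only afterwards integrates $\sqrt{2/\eps}\int_0^T(\cdots)\,dt$; this is what produces the $T^{3/2}$. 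You instead perform one integration by parts via~\eqref{eq:diff-relation} first, which isolates a single boundary term $\E(\Psi_T\cdot P_T)$ (handled by Lemma~\ref{lem:running-ave-Y} once, at $t=T$) plus a bulk term $\eps\,\E\int_0^T G_t\cdot\sqrt{2/\eps}\,P_t\,dt$ (handled directly by Lemma~\ref{lem:martingale-approx}). Your route is slightly more economical: tracking the constants through Lemma~\ref{lem:martingale-approx} with $C_2=\cO(\sqrt T)$ yields $\cO((1+T)\eps^2)$ overall, a hair sharper in $T$ than the paper's $\cO((1+T^{3/2})\eps^2)$, though both sit comfortably inside the stated bound.
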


\begin{proof}
	We have that
	\begin{equation*}
		\Vtilal_t = \Vtilal_0 e^{-t/\eps} + \sqrt{\frac{2}{\eps}} P_t \,.
	\end{equation*}
	Therefore, as  $\Phi( \E {\Xal_s}, s )$ is deterministic,
	\begin{align*}
		 & \abs{ \E \int_0^T \Vtilal_t \cdot \int_0^t f(s)  \Phi( {\Xal_s}, s)  \, ds \, dt }                                                                                \\
		 & = \abs{ \E \int_0^T \Vtilal_t \cdot \int_0^t f(s) \left( \Phi( {\Xal_s}, s) - \Phi( \E {\Xal_s}, s )  \right) \, ds \, dt }                                       \\
		 & \leq
		\abs{ \int_0^T  e^{-t/\eps} \E  \left(\Vtilal_0 \cdot \int_0^t f(s) \left( \Phi( {\Xal_s}, s) - \Phi( \E {\Xal_s}, s )  \right) \, ds \right)\, dt }                 \\
		 & \qquad + \sqrt{\frac{2}{\eps}}\abs{ \int_0^T   \E  \left(P_t \cdot \int_0^t f(s) \left( \Phi( {\Xal_s}, s) - \Phi( \E {\Xal_s}, s )  \right) \, ds \right)\, dt } \\
		 & \leq \int_0^T \left( C_1 \eps(1+t)^2 e^{(C_2 - 1/\eps)t}\right) \, dt + C_3 (1+ \sqrt{T}) \eps^2                                                                  \\
		 & \leq C \eps^2 (1 + \sqrt{T}) \,.
	\end{align*}
	The first term in the second to last inequality comes from~\eqref{eq:average-approximate-upgrade} and the second term comes from~\eqref{eq:running-ave-Y}.
\end{proof}

Let us now resume to the main proof.

\subsection{Proof of~\eqref{ine:weak-I}}
\label{subsec:proof-I}

In this subsection we continue the proof of the weak estimate~\eqref{eq:BrownianEst-upgrade}.
The first two steps were performed in Subsection~\ref{subsec:weak-est}.

\restartsteps

\step
From~\eqref{eq:rewrite}, we have
\begin{align*}
	I
	 & = \int_0^T (\tilde{V}^\varepsilon_t\cdot \nabla_x u (X^\varepsilon_t,t) - \varepsilon \Delta_x u ( X^\varepsilon_t,t))\,dt                  \\
	 & =  \int_0^T  \Vtilal_t \cdot \grad_x  u(x, 0) \, dt                                                                                         \\
	 & \quad + \int_0^T  \Vtilal_t \cdot \int_0^t \left(\grad_x \partial_tu(\Xal_s,s) + D_x^2  u (\Xal_s, s) (A_s + \Vtilal_s)\right) \, ds  \, dt \\
	 & \quad - \int_0^T \eps \Delta  u (\Xal_t, t) \, dt
\end{align*}
We let
\begin{align*}
	IA & =
	\int_0^T \Vtilal_t \cdot \grad_x  u (x,0) \, dt,                                                                               \\
	IB & = \int_0^T \Vtilal_t \cdot \int_0^t  \left( \grad_x\partial_t  u(\Xal_s,s) + D_x^2 u (\Xal_s, s) A_s \right) \, ds \, dt, \\
	IC & = \int_0^T \Vtilal_t \cdot \int_0^t D^2_x  u (\Xal_s, s)  \Vtilal_s \, ds \, dt
\end{align*}
so that
\begin{equation}\label{eq:Is}
	I = IA + IB + IC - \eps \int_0^T  \lap_x  u(\Xal_t, t)\, dt.
\end{equation}

Immediately, we have
\begin{equation}
	\E (IA)  =
	\E \int_0^T \left( \Vtilal_0 e^{-t/\eps} + \sqrt{\frac{2}{\eps}} P_t \right)
	\cdot \grad_x u (x,0)\, dt
	= 0 \,.
	\label{eq:IA}
\end{equation}

We will be showing that
\begin{gather}
	\abs{ \E (IB)  }  \leq C \left( 1 + T \right)\eps^2 \,, \label{ine:IB} \\
	\abs{ \E \left( IC - \eps \int_0^T \Delta_x u(\Xal_t, t) \, dt   \right)  }  \leq C(1 + T) \eps^2 \label{ine:IC} \,.
\end{gather}

\begin{remark}
	Inequality~\eqref{ine:IB} is an upgrade of Lemma~\ref{lem:martingale-approx} as it takes into
	consideration the specific form of the function $G$.
	Inequality~\eqref{ine:IC}, up to technical details, is a consequence of the almost It\^o isometry
	in Lemma~\ref{lem:diffusion-approx}.
\end{remark}

{
\step[Analyzing $IB$]
We now show~\eqref{ine:IB} is true, .i.e.,
\begin{equation*}
	\abs{ \E (IB)  }  \leq C \left( 1 + T \right)\eps^2  \,.
\end{equation*}

Because
\begin{equation*}
	A_t  = b_0 e^{-t/\eps} + \frac{1}{\eps} \int_0^t e^{-(t-s)/\eps} b (\Xal_s, s) \, ds  \,,
\end{equation*}
lemmas in Subsection~\ref{subsec:aux} does not directly apply.
However, the strong estimate~\eqref{eq:A-F-est} comes to the rescue.

From~\eqref{eq:A-F_3} and definition of $F_t$ in~\eqref{eq:advection-diffusion}, we have
\begin{equation*}
	A_t  = F_t -L_t + \eps( \partial_t b_t + D_x b_t b_t) = b_t - L_t \,.
\end{equation*}
Then,
\begin{equation}
	IB
	= \int_0^T \Vtilal_t \cdot \int_0^t  \left( \grad_x\partial_t  u(\Xal_s,s) + D_x^2 u (\Xal_s, s) (b_s - L_s) \right) \, ds \, dt \,.
	\label{eq:IB-id}
\end{equation}
Let
\begin{equation*}
	\Phi(\Xal_t, t) = \grad_x\partial_t  u(\Xal_t,t) + D_x^2 u (\Xal_t, t) b_t \,.
\end{equation*}
From Proposition~\ref{prop:weak-Vtil-est}, we have
\begin{equation}
	\abs{\E \int_0^T \Vtilal_t \cdot \int_0^t  \Phi(\Xal_s,s )  \, ds \, dt}
	\leq C \eps^2(1 + \sqrt{T})
	\label{ine:IB-0}
\end{equation}

We are left to analyze
\begin{align*}
	 & \int_0^T \Vtilal_t \cdot \int_0^t  D_x^2 u(X_s,s) L_s \, ds \, dt                                                     \\
	 & =
	\int_0^T \left( \Vtilal_0 e^{-t/\eps} + \sqrt{\frac{2}{\eps}} P_t \right) \cdot \int_0^t  D_x^2 u(X_s,s) L_s \, ds \, dt \\
\end{align*}

Recall that $L_t = (\Vtilal_t + b_t) - \Val_t$.
By Fubini theorem,
\begin{align}
	 & \abs{ \E \int_0^T e^{-t/\eps} \Vtilal_0    \cdot \int_0^t  D_x^2 u(X_s,s) L_s \, ds \, dt } \nonumber                                       \\
	 & = \abs{ \E \int_0^T \int_s^T e^{-t/\eps} \Vtilal_0    \cdot   D_x^2 u(X_s,s) L_s \, dt \, ds } \nonumber                                    \\
	 & = \abs{ \int_0^T \eps \left( e^{-s/\eps} - e^{-T/\eps}  \right) \E \left(\Vtilal_0    \cdot   D_x^2 u(X_s,s) L_s \right)  \, ds } \nonumber \\
	 & \leq C\norm{u}_{C^2}   \int_0^T \eps  e^{-s/\eps} \left(\E   |L_s|^2 \right)^{1/2} \, ds  \nonumber                                         \\
	 & \leq C \int_0^T e^{-s/\eps} \eps^2 \, ds
	\leq C \eps^{3} \,.
	\label{ine:IB-1}
\end{align}
The last line follows from~\eqref{eq:V-diff}.

Furthermore, from~\eqref{eq:diff-relation},
\begin{align}
	 & \E \sqrt{\frac{2}{\eps}}\int_0^T   P_t  \cdot \int_0^t  D_x^2 u(\Xal_s,s) L_s \, ds \, dt \nonumber \\
	 & = \E \sqrt{2\eps} \int_0^T (dW_t - dP_t)\cdot \int_0^t  D_x^2 u(\Xal_s,s) L_s \, ds      \nonumber  \\
	 & = - \E \sqrt{2\eps} \int_0^T   \int_0^t  D_x^2 u(\Xal_s,s) L_s \, ds \cdot dP_t         \nonumber   \\
	 & =  \E \sqrt{2\eps} \left( - P_T \cdot \int_0^T D_x^2 u(\Xal_t,t) L_t \, dt
	+\int_0^T  P_t \cdot   D_x^2 u(\Xal_t,t) L_t \, dt                         \right)        \,.
	\label{eq:IB-2}
\end{align}
Recall that from~\eqref{ine:P} that $\E|P|^2 \leq C \eps$
and that $\E  | L_t |^2  \leq C\eps^2$ from~\eqref{eq:V-diff}.
Applying this to~\eqref{eq:IB-2}, it then follows that
\begin{equation}
	\abs{ \E \sqrt{\frac{2}{\eps}}\int_0^T   P_t  \cdot \int_0^t  D_x^2 u(\Xal_s,s) L_s \, ds \, dt}
	\leq C T \eps^2 \,.
	\label{ine:IB-2}
\end{equation}

Using~\eqref{ine:IB-0}, \eqref{ine:IB-1} and~\eqref{ine:IB-2} in~\eqref{eq:IB-id},
we have shown
\begin{equation*}
	\abs{\E (IB) } \leq C\left(1 + T  \right)\eps^2 \,.
\end{equation*}

\step[Analzing $IC$]
We now show~\eqref{ine:IC} is true, i.e.,
\begin{equation*}
	\abs{ \E \left( IC - \eps \int_0^T \Delta_x u(\Xal_t, t) \, dt   \right)  }  \leq C(1 + T) \eps^2  \,.
\end{equation*}

Denoting $R_t = D_x^2 u(\Xal_t, t)$, we have
\begin{align}
	IC  = & \int_0^T \Vtilal_t \cdot \int_0^t R_s  \Vtilal_s \, ds \, dt  \nonumber                                  \\
	      & = \int_0^T \Vtilal_t
	\cdot \int_0^t R_s  \left( \Vtilal_0 e^{-s/\eps} + \sqrt{\frac{2}{\eps}} P_s \right)  \, ds \Bigg)  dt \nonumber \\
	      & = IC1 + IC2 + IC3 + IC4 \,,
	\label{eq:IC}
\end{align}
where

\begin{gather*}
	IC1 = \int_0^T \Vtilal_0 e^{-t/\eps} \cdot \int_0^t R_s \Vtilal_0 e^{-s/\eps} \, ds \, dt \\
	IC2 = \int_0^T \Vtilal_0 e^{-t/\eps} \cdot \int_0^t R_s \sqrt{\frac{2}{\eps}} P_s \, ds \, dt \\
	IC3 = \sqrt{\frac{2}{\eps}} \int_0^T P_t \cdot \int_0^t R_s \Vtilal_0 e^{-s/\eps} \, ds \, dt \\
	IC4 = \frac{2}{\eps} \int_0^T P_t \cdot \int_0^t R_s P_s \, ds \, dt  \,.
\end{gather*}

\emph{Analyzing IC1.}
To analyze $IC1$, we note that by H\"{o}lder inequality,
\begin{align}
	\left| \E (IC1) \right|
	 & \leq \sum_{j=1}^n \sum_{k=1}^n \int_0^T \left(\E \left| \tilde{V}_t^{\varepsilon,j}\right|^2\right)^{1/2} e^{-t/\varepsilon}
	\int_0^t \left\|u\right\|_{C^2} \left(\E \left|\tilde{V}_s^{\varepsilon,k}\right|^2\right)^{1/2} e^{-s/\varepsilon}\,ds\,dt\nonumber \\
	 & \leq 2n^2 \int_0^T e^{-t/\varepsilon} \int_0^t e^{-s/\varepsilon}\,dt\,ds\nonumber                                                \\
	 & = 2n^2 \varepsilon^2 \left( \left(1-e^{-t/\varepsilon}\right)
	- \frac{1}{2} \left(1-e^{-2t/\varepsilon}\right)\right).
	\label{eq:IC1}
\end{align}
Here, we use \eqref{ine:Vtilde} in the second inequality.

\emph{Analyzing IC2.}
\begin{align}
	\left| \E (IC2 )   \right|
	 & = \left| \E \int_0^T \Vtilal_0 e^{-t/\eps} \cdot \int_0^t R_s\sqrt{\frac{2}{\eps}} P_s \, ds \, dt \right| \nonumber \\
	 & \leq \int_0^T e^{-t/\eps}  \int_0^t \left|  \E \left(
	(\Vtilal_0)^\top R_s\cdot \sqrt{\frac{2}{\eps}} P_s \, ds \right)\right| \, dt
	\nonumber                                                                                                               \\
	 & \leq C \eps^2 \,.
	\label{eq:IC2}
\end{align}
Here, the last inequality follows from Lemma~\ref{lem:martingale-approx},
with $G(\omega,t) = (\Vtilal_0)^\top R_t$.

\emph{Analyzing $IC3$.}
Applying relation~\eqref{eq:diff-relation} and integration by parts,
\begin{align*}
	IC3
	 & = \sqrt{\frac{2}{\eps}} \int_0^T P_t \cdot \int_0^t R_s \Vtilal_0 e^{-s/\eps} \, ds \, dt \\
	 & = \sqrt{2\eps}\int_0^T  \int_0^t R_s \Vtilal_0 e^{-s/\eps} \, ds \cdot dW_t               \\
	 & \qquad + \sqrt{2\eps} \left(
	-P_T \cdot \int_0^T R_t \tilde{V}_0^\varepsilon e^{-t/\varepsilon}\,dt
	+ \int_0^t P_t \cdot R_t \tilde{V}_0^\varepsilon e^{-t/\varepsilon}\,dt
	\right).
\end{align*}
Taking the expectation of the above
then applying H\"older inequality and~\eqref{ine:P}, we have
\begin{align}
	\E ( IC3 )
	 & \leq \sum_{j=1}^n \sum_{k=1}^n \sqrt{2\varepsilon} \left\|u\right\|_{C^2} \left(
	\left(\E \left|P^j_T\right|^2\right)^{1/2}
	\int_0^T \left(\E (\tilde{V}^{\varepsilon,k}_0)^2\right)^{1/2}
	e^{-t/\varepsilon}\,dt\right.\nonumber                                              \\
	 & \qquad \left.
	+ \int_0^T\left(\E \left|P^j_t\right|^2\right)^{1/2}
	\left(\E (\tilde{V}^{\varepsilon,k}_0)^2\right)^{1/2}
	e^{-t/\varepsilon}\,dt
	\right)\nonumber                                                                    \\
	 & \leq \varepsilon C \left\|u\right\|_{C^2} 2\int_0^T e^{-t/\varepsilon}\,dt
	\leq C\eps^2  \,.
	\label{eq:IC3}
\end{align}

\emph{Analyzing $IC4$.}
Utilizing~\eqref{eq:diff-relation} and integration by parts yet once again
\begin{align}
	IC4
	 & = 2 \left(- P_T \cdot \int_0^T R_t P_t\,dt
	+\int_0^T P_t \cdot R_t P_t\,dt\right) \nonumber       \\
	 & \qquad + 2\int_0^T \int_0^t R_s P_s\,ds \cdot dW_t.
	\label{eq:IC4-1}
\end{align}
After taking the expectation, the last term vanishes.
We shall estimate each remaining term separately.

Consider
\begin{align*}
	\left|\E \left(P^j_T \int_0^t R^{jk}_t P^k_t\right) \,dt\right|
	 & = \varepsilon \left|
	\E \left(-P^j_T \int_0^t R^{jk}_t\,dP^k_t\right)
	+ \E \left( P^j_T\int_0^t R^{jk}_t\,dW^k_t\right)
	\right|
\end{align*}
where we apply \eqref{eq:diff-relation}.
We have that, by It\^{o} isometry,
\begin{align}
	 & \varepsilon\left| \E \left( P^j_T \int_0^T R^{jk}_s \,dW^k_s \right) \right|
	= \varepsilon\left| \E \left( \delta^{jk} \int_0^T e^{-(T-s)/\eps} R^{jk}_s \, ds \right) \right|\nonumber \\
	 & \leq \varepsilon^2 \delta^{jk}\left\|u\right\|_{C^2} \left(1-e^{-T/\varepsilon}\right)
	\label{eq:IC4-2}
\end{align}
On the other hand, by H\"older inequality,
\begin{align}
	 & \varepsilon\left| \E \left( P^j_T \int_0^T R^{jk}_s dP^k_s \right)  \right|\nonumber    \\
	 & = \left| \eps \E \left( P^j_T \left(R^{jk}_T P^k_T
		- \int_0^T \sum_{\ell=1}^n P^k_t \partial_{x^\ell} R^{jk}_t V^{\varepsilon,\ell}_t
	+ \partial_t R^{jk}_t\right)\,dt \right) \right|\nonumber                                  \\
	 & \leq \frac{\varepsilon^2}{2} \|u\|_{C^2} \E |P^j_T|^2 \nonumber                         \\
	 & \qquad+ \varepsilon \left( \E \left| P^j_T\right|^4\right)^{1/4} \left\|u\right\|_{C^3}
	\int_0^T \left( \E \left|P^k_t\right|^4\right)^{1/4}
	\sum_{\ell=1}^n \left(\E \left|V^{\varepsilon,\ell}_t\right|^2\right)^{1/2}\,dt
	\nonumber                                                                                  \\
	 & \leq C \eps^2 T \,,
	\label{eq:IC4-3}
\end{align}
where
we apply \eqref{ine:P} and \eqref{eq:EV2} in the last inequality.

Lastly, the second term in \eqref{eq:IC4-1} is approximated by Lemma~\ref{lem:diffusion-approx},
\begin{align}
	 & \abs{ 2\E \int_0^T P_s \cdot R_s  P_s \, ds - \eps \E \int_0^T\lap_x u(\Xal_s,s) \,ds } \nonumber     \\
	 & = \abs{ 2\E \int_0^T P_s \cdot R_s  P_s \, ds -  \eps \E \int_0^T\sum_{j=1}^nR^{jj}_s \,ds} \nonumber \\
	 & \leq  C \left(1 + \sqrt{T} \right) \eps^2 \,.
	\label{eq:IC4-4}
\end{align}

Combining~\eqref{eq:IC4-1}-\eqref{eq:IC4-4}, we have
\begin{equation}
	\left|\E (IC4) - \eps \int_0^T  \lap_x  u(\Xal_t, t)\, dt\right| =
	C \left( 1 + T \right) \eps^2 \,.
	\label{eq:IC4}
\end{equation}

Combining~\eqref{eq:IC1},\eqref{eq:IC2}, \eqref{eq:IC3}, and~\eqref{eq:IC4},
we arrive at~\eqref{ine:IC}.

\step[Conclusion]
Combining~\eqref{eq:IA}, ~\eqref{ine:IB} and ~\eqref{ine:IC}, we have
\begin{equation}
	\left| \E (I)  \right|
	\leq C \left( 1 + T  \right)	\eps^2 \,,
	\label{ine:E-I}
\end{equation}
which is what~\eqref{ine:weak-I} says. \qed

\section{Proof of Weak Estimate~\eqref{eq:BrownianEst-upgrade}}
\label{sec:proof-weak}

We finish the proof of~\eqref{eq:BrownianEst-upgrade} here, which was started
in Subsection~\ref{subsec:weak-est}.

Combining~\eqref{ine:E-I} and~\eqref{eq:E-II} into ~\eqref{eq:I-II}, we then have
\begin{align*}
	 & \abs{ \E\left( \varphi(\Xal_T) - \varphi(\Zal_T)  \right) } = \abs{ \E \left( u^\eps(\Xal_T,T) - u^\eps(x,0)   \right)   } \\
	 & \leq \abs{ \E (I)} + \abs{ \E (II)} \leq C( 1 + T ) \eps^2 \,.
\end{align*}
The weak estimate~\eqref{eq:BrownianEst-upgrade} then follows. \qed

\subsection{Proof of Corollary~\ref{cor:rate}}
Note that
$ \norm{\cT b}_{L^\infty(\T^n \times [0,T]; \R^n) } \leq C <\infty$.
Estimate~\eqref{eq:Hasselmann-rate} is a consequence of this observation and
Gronwall inequality.

To see~\eqref{eq:weak-Hasselmann},
note that $\E (II) \leq C\eps^2$. However, $F = b - \eps\cT b$ and $\eps \cT b \sim \cO(\eps).$
So, without $\eps \cT b$, this estimate would be of order $\cO(\eps)$.
See Remark~\eqref{rem:O2} for further insights.
\qed

\section{Numerical simulation} \label{sec:numerics}
\label{sec:simulation}

In this section, we present two numerical investigations of our approximation regime.
The first part involves solving one-dimensional problems where
we verify the theory presented in the previous sections.
Since solving equation \eqref{eq:KFP} in one dimension is feasible,
we can compute the spatial density $g^\eps = \int \rho^\eps\,dv$, which will then be compared to $u^\eps$ solving \eqref{eq:PDEApprox}.
Likewise, solving \eqref{eq:Langevin} and \eqref{eq:advection-diffusion} is relatively simple.
The observed numerical convergence suggests that the estimates \eqref{ine:strong-est}
and \eqref{eq:BrownianEst-upgrade} are sharp,
and emphasize the importance of the drift correction term $\eps \mathcal{T}b$.
For the second part, we consider 2D problems.
Although quantifying the error in two dimensions is intractable for both \eqref{eq:KFP} and \eqref{eq:Langevin},
we can still observe the long-time behavior of the solutions
which exhibits non-uniformity even when the vector field $b$ is incompressible.
Our approximation captures this non-uniformity, suggesting its suitability as an approximation to long-time behavior.
A detailed analysis of this will be the subject of future work.

Before stating the numerical results, we provide a brief description of the numerical methods for \eqref{eq:KFP} and \eqref{eq:PDEApprox} in one dimensional system with periodic spatial boundary condition.
All of the computations were implemented in Julia~\cite{bezanson2017julia}.

\subsection{Numerical methods}

\subsubsection{Monte-Carlo simulation for SDEs}
We use SRIW1 solver~ \cite{RosslerRungeKutta2010} in Julia's package \texttt{DifferentialEquations.jl}~\cite{RackauckasNieDifferentialEquations2017} to simulate all the SDEs  presented in this work.

\subsubsection{Numerical method for~\eqref{eq:KFP} in 1D}

As mentioned in Section~\ref{sec:intro}, it is challenging to numerically study~\eqref{eq:KFP} for general dimension.
We propose a numerical method for Equation~\eqref{eq:KFP} in 1D ($x$ and $v$ are both 1D).
Let us rewrite the \eqref{eq:KFP} as
\begin{equation}
	\partial_t\rho = \mathcal{A}\rho + \mathcal{B}\rho,
\end{equation}
where $\mathcal{A}\rho:= - v\partial_x \rho$ and $\mathcal{B}(t)\rho := - \frac{1}{\eps}\partial_v((b(\cdot,t)-v)\rho-\partial_v\rho)$.
First, we consider the semi-discretization in time of $\rho$, denoted by $\rho^n\approx \rho(\cdot,t_n)$, where $t_n=n\delta t$,
which are defined recursively using Strang splitting method:
\begin{equation}
	\rho^{n+1} = e^{\mathcal{A}\delta t/2}e^{\mathcal{B}(t_n)\delta t}e^{\mathcal{A}\delta t/2}\rho^n, \quad \rho^0 = \rho(\cdot,0).
\end{equation}

Let us now denote by $\rho_{jk}^n$ the approximation of $\rho$ at $(x_j,v_k,t_n)$ where $x_j=j\delta x$, $v_k=k\delta v$.
We consider a truncated $v$-domain $(-M\delta v,M\delta v)$ for a large $M\in\mathbb{N}$, the boundary of which is assigned no-flux boundary condition so that the numerical scheme is conservative.

For the 1D simulation in Section~\ref{sec:num_conv_rate}, where we examine the rate of convergence, we choose $V=8$, $N=2^{11}$, $M=2^{12}$, and $\delta t = \sqrt{\eps} 2^{-10}$.

We use the convention that $\rho$ be approximated by $\rho^n_{jk}$ at half integer grid points, i.e., $j=\frac{1}{2},\dots,N-\frac{1}{2}$, $k=-M +\frac{1}{2},\dots,M - \frac{1}{2}$.
We define the following finite difference operators in $v$:
\begin{align}
	\left(D_v\rho^n_{j\cdot}\right)_{k} & := \frac{\rho_{j,k+ \frac{1}{2}}^n - \rho_{j,k - \frac{1}{2}}^n}{\delta v},\quad k=-M+1,\dots,M-1.
\end{align}
We also have averaging operators in $v$:
\begin{align}
	\left(A_v\rho^n_{j\cdot}\right)_{k} & := \frac{\rho_{j,k+ \frac{1}{2}}^n + \rho_{j,k - \frac{1}{2}}^n}{2},\quad k=-M+1,\dots,M-1.
\end{align}

We approximate the intermediate step $e^{\mathcal{B}\delta t}\rho(x_j,v_k,t_n)\approx \bar{\rho}_{jk}^n$ in the Strang splitting by applying Crank-Nicolson scheme:
\begin{subequations}
	\begin{equation}
		\frac{\bar{\rho}_{jk}^n - \rho^n_{jk}}{\delta t} =  - \frac{1}{\eps} \left(D_v q ^{n + \frac{1}{2}}_{j\cdot} \right)_{k}
	\end{equation}
	\begin{equation}
		q^{n+ \frac{1}{2}}_{jk} = \begin{cases}
			0,                                                                                                                                         & \text{ if }k=\pm M \\
			\left(\left(b(x_j,t_{n+ \frac{1}{2}}) - v_\cdot\right)A_v - D_v \right)\left(\frac{\bar{\rho}^{n}_{j\cdot}+\rho^{n}_{j\cdot}}{2}\right)_k, & \text{ if } -M<k<M \\
		\end{cases}
	\end{equation}
\end{subequations}
Effectively, the operator $e^{\mathcal{B}\delta t}$ is approximated by $B_+^{-1}B_-$ where $B_{\pm}$ is a $2M\times 2M$ matrix given by
\begin{equation}
	B_{\pm} = I \pm \frac{\delta t}{2\eps\delta v} \left(-D_+^\top \left(\mathrm{diag}(b-v) A - \frac{1}{\delta v}D_+\right)\right)
\end{equation}
where $D_+,A$ are $(2M-1)\times 2M$ matrices given by
\begin{equation}
	D_+ = \begin{pmatrix}
		-1     & 1      & 0      & \cdots & 0  & 0 \\
		0      & -1     & 1      & \cdots & 0  & 0 \\
		0      & 0      & -1     & \dots  & 0  & 0 \\
		\vdots & \vdots & \vdots & \ddots & 1  & 0 \\
		0      & 0      & 0      & \dots  & -1 & 1
	\end{pmatrix},\quad
\end{equation}
\begin{equation}
	A = \frac{1}{2}\begin{pmatrix}
		1      & 1      & 0      & \cdots & 0 & 0 \\
		0      & 1      & 1      & \cdots & 0 & 0 \\
		0      & 0      & 1      & \dots  & 0 & 0 \\
		\vdots & \vdots & \vdots & \ddots & 1 & 0 \\
		0      & 0      & 0      & \dots  & 1 & 1
	\end{pmatrix}.
\end{equation}

Next, notice that $e^{\mathcal{A}\delta t/2}$ is nothing but the translation operator in $x$ by $\frac{v\delta t}{2}$.
To obtain an accuracy of order $\mathcal{O}(\delta x^2)$, we use quadratic interpolation to approximate the translation.

\subsubsection{Numerical method for~\eqref{eq:PDEApprox} in 1D}
To solve \eqref{eq:PDEApprox} for $u$, naively, we could apply a Crank-Nicolson method in $x$.
However, this creates spurious oscillation for small $\eps$ where the equation becomes advection dominated.
To address this, we turn to an upwind method \cite{LeVeque2002},
as detailed below.

Note that the method describe here for 1D \eqref{eq:PDEApprox} is used for Section~\ref{sec:num_conv_rate}.
This can be extended to a 2D system by applying Strang splitting method where the operator is written as the sum of the 1D operators in both dimensions.

Let us denote its approximation by $u_j^n\approx u(x_j,t_n)$, $j=\frac{1}{2},\dots,N- \frac{1}{2}$.
For notational convenience, we have by convention that $u_0^n=u_N^n$, $u_{-1/2}^n = u_{N- 1/2}^n$ and $u_{-1}^n = u_{N-1}^n$.
We introduce backward and forward finite difference operators
\begin{align}
	\left(D^-_x u^n\right)_j & := \frac{u_j^n - u_{j-1}^n}{\delta x},   \\
	\left(D^+_x u^n\right)_j & := \frac{u_{ j+1 }^n - u_j^n}{\delta x},
\end{align}
for $j=0,\dots,N-1$.
We also denote by $A_x$ the average operator in $x$:
\begin{equation}
	\left(A_x u^n\right)_j := \frac{u_j^n + u_{j-1}^n}{2}\quad j=0,\dots,N-1.
\end{equation}
The approximation $u_j^n$ satisfies
\begin{subequations}
	\begin{equation}
		\frac{u^{n+1}_j-u^n_j}{\delta t} = - D_x^+ \left(\left(\left(b-\eps Tb\right) \left(x,t_{n+1/2}\right) A_x - \eps D^-_x\right)u^{n+1/2}\right)_j
	\end{equation}
\end{subequations}

We use $N=2^{19},\delta t = 2^{-7}$ for the setting in Section~\ref{sec:num_conv_rate},
and $N=2^9, \delta t = 2^{-7}$ for the 2D system in Section~\ref{sec:num_long_time}.

Let us consider an approximation $u_j^n$ given by the discretization scheme:
\begin{equation}
	\begin{split}
		\frac{u^{n+1}_j-u^n_j}{\delta t} & = - \frac{1}{\delta x}\left(a_{j + \frac{1}{2}} ^+ u^{n + \frac{1}{2}}_j - a_{j + \frac{1}{2}}^- u^{n + \frac{1}{2}}_{j+1}\right.                                                                                        \\
		                                 & \qquad \left.- a_{j - \frac{1}{2}} ^+ u^{n + \frac{1}{2}}_{j-1} + a_{j - \frac{1}{2}}^- u^{n + \frac{1}{2}}_j\right) + \frac{u_{j-1}^{n+ \frac{1}{2}}- 2 u_j^{n + \frac{1}{2}} + u_{j+1}^{n + \frac{1}{2}}}{\delta x^2}.
	\end{split}
\end{equation}
Here, we have $a^+ := \max \{0,a\}$ and $a^- := \max \{0,-a\}$ and $a_j = \left(b - \eps \cT b\right)(x_j,t_{n + \frac{1}{2}})$.
Since $a^+ = \frac{1}{2}(|a| + a)$ and $a^- = \frac{1}{2} (|a|-a)$, this is equivalent to
\begin{equation}
	\begin{split}
		\frac{u^{n+1}_j-u^n_j}{\delta t} & = - \frac{1}{\delta x}\left(a_{j + \frac{1}{2}}  \frac{u^{n + \frac{1}{2}}_j+u^{n + \frac{1}{2}}_{j+1}}{2} - \left|a_{j + \frac{1}{2}}\right|\frac{u^{n + \frac{1}{2}}_{ j+1 }-u^{n + \frac{1}{2}}_j}{2} \right. \\
		                                 & \qquad \left. - a_{j - \frac{1}{2}}  \frac{u^{n + \frac{1}{2}}_j+u^{n + \frac{1}{2}}_{j+1}}{2} + \left|a_{j - \frac{1}{2}}\right|\frac{u^{n + \frac{1}{2}}_{ j }-u^{n + \frac{1}{2}}_{ j-1 }}{2}\right)          \\
		                                 & \qquad + \frac{u_{j-1}^{n+ \frac{1}{2}}- 2 u_j^{n + \frac{1}{2}} + u_{j+1}^{n + \frac{1}{2}}}{\delta x^2}.
	\end{split}
\end{equation}
With this, we can write the implicit equation as a linear equation $B_+u^{n+1} = B_- u^n$ where
\begin{equation}
	B_{\pm} = I \pm \frac{\delta t}{2\eps\delta x} \left(-\mathrm{diag}\overline{a}\overline{A}+\mathrm{diag}|\overline{a}|\overline{A}+\mathrm{diag}\underline{a} \underline{A}-\mathrm{diag}|\underline{a}|\underline{A} + \frac{1}{\delta x}\overline{D}^\top \overline{D}\right).
\end{equation}
Here, upper and lower bi-diagonal matrices respectively with $\frac{1}{2}$ non-zero entries.
Here, the $N\times N$ matrices $\overline{A},\underline{A}$ and  $\overline{D}, \underline{D}$ are given by
\begin{equation}
	\overline{D} = \begin{pmatrix}
		-1     & 1      & 0      & \cdots & 0      & 0      \\
		0      & -1     & 1      & \dots  & 0      & 0      \\
		0      & 0      & -1     & \dots  & 0      & 0      \\
		\vdots & \vdots & \vdots & \ddots & \vdots & \vdots \\
		0      & 0      & 0      & \dots  & -1     & 1      \\
		1      & 0      & 0      & \dots  & 0      & -1
	\end{pmatrix} = -\underline{D}^\top,
\end{equation}
and $\overline{A} = |\overline{D}|/2, \underline{A}=\left|\underline{D}\right|/2$.
The $N$-vectors $\overline{a},\underline{a}$ are given by $\overline{a} = (a_1,\dots,a_N,a_0)$ and $\underline{a} = (a_0,a_1,\dots,a_N)$.

\subsection{Numerical results}

\subsubsection{Numerical Verification of Main Estimates} \label{sec:num_conv_rate}

We first consider the solutions $u^\eps$ and $\rho^\eps$ to \eqref{eq:PDEApprox} and \eqref{eq:KFP} in 1D
with the domain $\mathbb{T}=(0,2\pi)$ and $b(x,t) = \sin x \sin t$.
The initial conditions are
\begin{equation*}
	u^\eps(\cdot,0)\equiv 1/2\pi,\quad \rho^\eps(x,v,0) = (2\pi)^{-3/2}\exp(-v^2/2)
\end{equation*}
respectively.
In Figure \ref{fig:g_u_weak_cos_bsinxsint},
we observe the dependency on $\eps$ of the weak error $\left|\int_{\mathbb{T}} (u^\eps(\cdot,T) - g^\eps(\cdot,T))\varphi\right|$ when $T=1$,
where $g^\eps(x,t):=\int \rho^\eps (x,v,t)\,dv$ and $\varphi(x)=\cos kx$ for different modes $k=1,2,\dots,6$.
We indeed confirm the $\mathcal{O}(\eps^2)$ weak convergence rate as stated in the main theorem (Theorem~\ref{t:estimate}).
Here, we use the $\eps\in\{2^{-k}: k=0,1,2,\dots,6\}\cup\{2^{-k}:k=3.25,3.75,4.25,\dots,7.00\}$.

This result
is consistent with a Monte-Carlo simulation (Figure~\ref{fig:strong-weak}),
from which we can also confirm our strong convergence result of order $\cO(\eps)$.
Here, we approximate both the strong $\E \left|X^\eps_T-Z^\eps\right|$ the weak error $\E (\varphi(X^\eps_T)-\varphi(Z^\eps_T))$, $T=100$, by averaging over $500,000$ samples where $X^\eps$ and $Z^\eps$ solve \eqref{eq:Langevin} and \eqref{eq:advection-diffusion} respectively.
The initial condition is
\begin{equation*}
	X^\eps_0 = Z^\eps_0 = 0.
\end{equation*}

On the other hand,
the naive approximation \eqref{eq:naive} yields convergence rate of $\cO(\eps)$ (Figure~\ref{fig:g_utilde_weak_cos_bsinxsint}).
This illustrates that incorporating the drift correction term $\eps \cT b$ improve the accuracy of our approximation scheme.

\begin{figure}
	\begin{center}
		\includegraphics[width=0.95\textwidth]{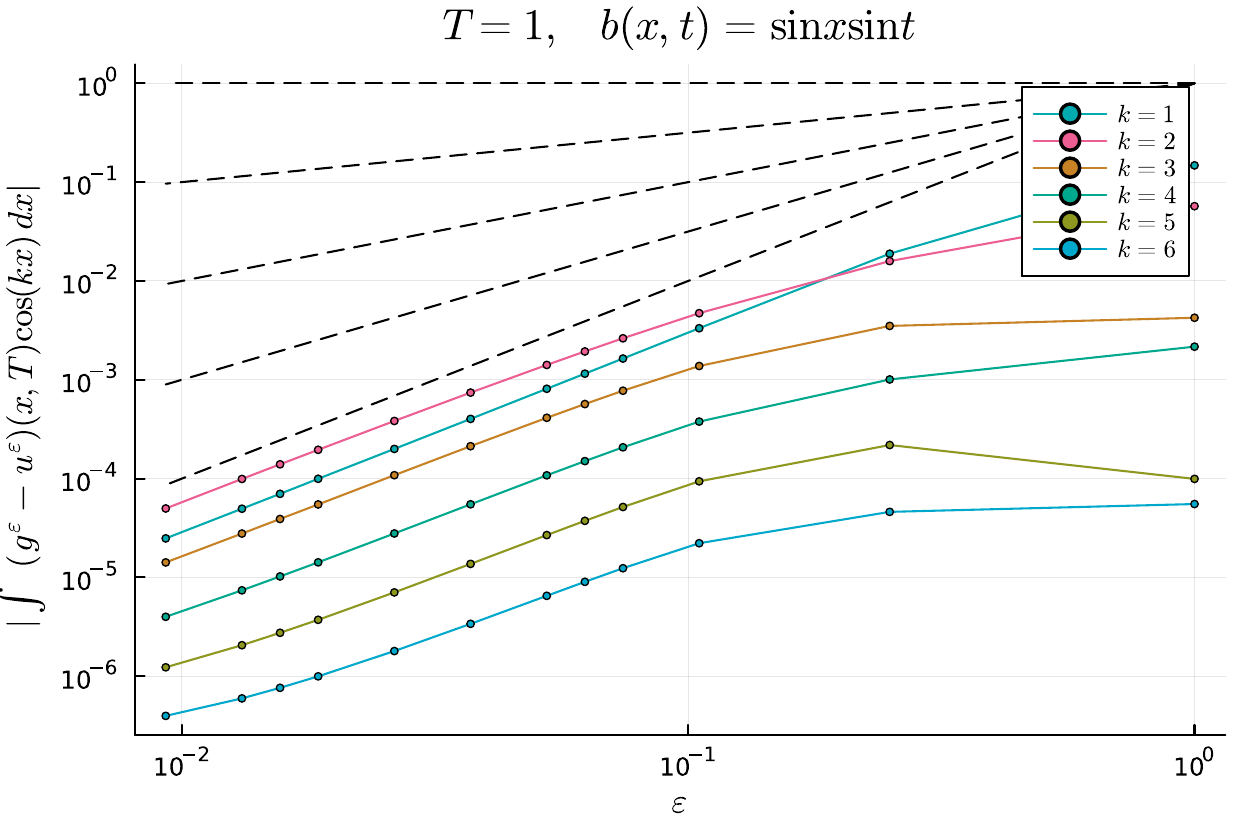}
	\end{center}
	\caption{
		The errors $\left| \int  (u^\eps(x,t)-g^\eps(x,t)) \cos(kx) \, dx \right|$ of the diffusion approximation \eqref{eq:PDEApprox} at time $T=1$ in $\log$ scale
		for Fourier modes $k=1,2,...,6$.
	}\label{fig:g_u_weak_cos_bsinxsint}
\end{figure}

\begin{figure}
	\begin{center}
		\includegraphics[width=0.95\textwidth]{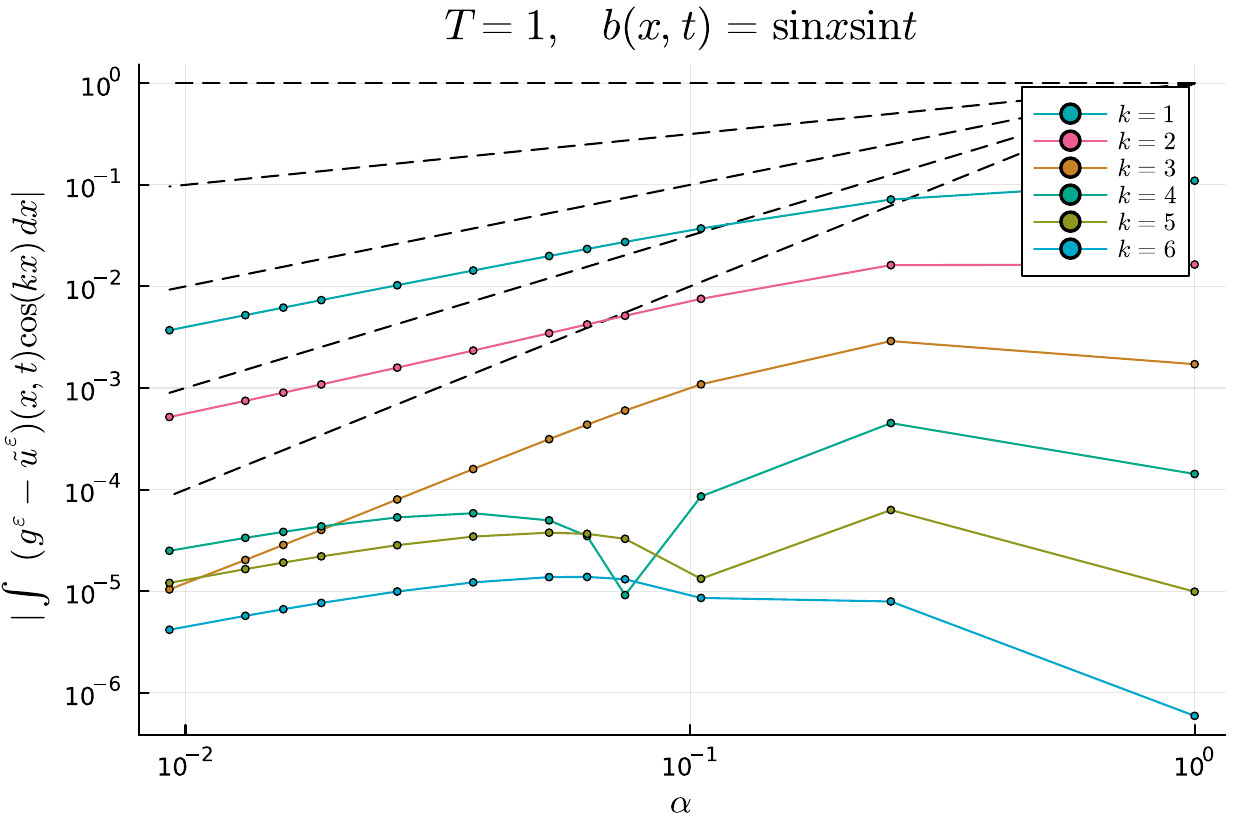}
	\end{center}
	\caption{
		The errors $\left| \int  (\tilde{u}^\eps(x,t)-g^\eps(x,t)) \cos(kx) \, dx \right|$ of the naive approximation \eqref{eq:naive} at time $T=1$ in $\log$ scale
		for Fourier modes $k=1,2,...,6$.
	}
	\label{fig:g_utilde_weak_cos_bsinxsint}
\end{figure}

\begin{figure}
	\begin{center}
		\includegraphics[width=0.95\textwidth]{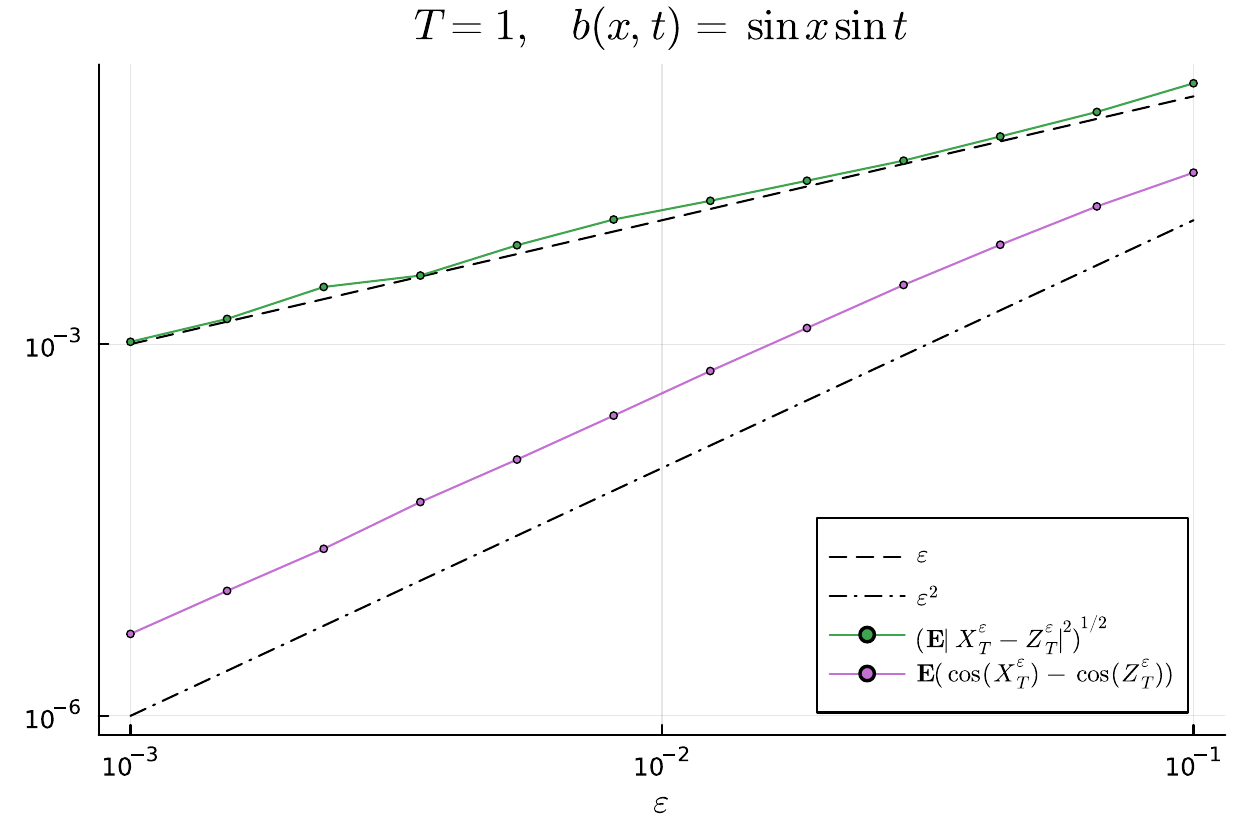}
	\end{center}
	\caption{Strong and weak errors in $\log$ scale via Monte-Carlo simulations between solution $X^\eps_t$ of~\eqref{eq:Langevin}
		and $Z^\eps_t$ of~\eqref{eq:advection-diffusion}
		at time $T=1$.
		500,000 trajectories were simulated using SRIW1 method, tested against $\varphi=\cos x$ for the weak error.}\label{fig:strong-weak}
\end{figure}

\subsubsection{Long time behavior} \label{sec:num_long_time}

For 2D simulations, we numerically solve the original problem only by a Monte-Carlo simulation of the Langevin SDEs \eqref{eq:Langevin} but not the kinetic Fokker-Planck equation \eqref{eq:KFP}.
We compute our approximation for both \eqref{eq:PDEApprox} and \eqref{eq:advection-diffusion}.
Additionally, we also compare our approximation to Monte-Carlo simulations of \eqref{eq:Hasselman-simple}, \eqref{eq:bakhtin-kifer}, and \eqref{eq:LWX}.
For the problem setup, we consider a 2D time-independent divergence-free flow $b(x)=(\sin(x_2),\sin(x_1))$ with fixed $\eps=2^{-4}$.
The simulation  admits a non-uniform stationary distribution.
To capture the long time behavior, we simulate all the equations to time $T=100$.
We demonstrate that the approximation \eqref{eq:advection-diffusion} captures the long-time behavior of \eqref{eq:KFP}, whereas \eqref{eq:Hasselmann} (or equivalently \eqref{eq:Hasselman-simple}) and \eqref{eq:kifer}
do not (Figures~\ref{fig:prev_approx}).

Figure~\ref{fig:longtime-approx}(A) and (B) describe Monte-Carlo simulation
with 1 million samples of the Langevin equation~\eqref{eq:Langevin} and the approximation~\eqref{eq:advection-diffusion}, respectively.
Then, we reconstruct the densities $\rho^\eps$  of~\eqref{eq:Langevin} and $\tilde \rho^\eps$ of~\eqref{eq:advection-diffusion}
by kernel density estimation in Julia's library KernelDensity.jl.
Figure \ref{fig:longtime-approx} (C) describes solution $u^\eps$ of the PDE~\eqref{eq:PDEApprox}.
We see that the long-time behavior of the approximation captures that of the original equation very well.
The $L^2$ errors are $\norm{\rho^\eps - u^\eps}_{L^2} \approx 0.0009$
and $\norm{\rho^\eps - \tilde \rho^\eps}_{L^2} \approx 0.0007$.
The $L^\infty$ errors are $\norm{\rho^\eps - u^\eps}_{L^\infty} \approx 0.007 $
and $\norm{\rho^\eps - \tilde \rho^\eps}_{L^\infty} \approx 0.003$.

Note that without the higher order correction in the advection term the approximated steady state solution would be merely the constant solution.
In fact, this is not surprising since one see that the drift term of their approximations are divergence-free vector fields.

\appendix

\section{Proofs of Lemmas }
\label{app:aux}

\begin{proof}[Proof of Lemma~\ref{lem:Y4t}]
	Explicitly, we have
	\begin{align*}
		Y_t & = \int_0^t \Big( \Val_0 e^{-s/\eps} + \frac{1}{\eps} \int_0^s e^{-(s-r)/\eps} (b_r - \E b_r) \, dr
		+ \sqrt{\frac{2}{\eps}} P_s  \Big) ds \,.
	\end{align*}
	Consider
	\begin{align*}
		 & \E \left| \int_0^t (b_s - \E b_s) \, ds \right|^4                                                   \\
		 & = \E \left|  \int_0^t \left(b_s - b(\E X_s,s) - \E (    b_s -  b(\E X_s,s)) \right) \, ds \right|^4 \\
		 & =
		\E \left|  \int_0^t \left( D_xb(\eta_s,s) ( \Xal_s - \E \Xal_s)
		- \E \left(D_xb(\eta_s,s) (\Xal_s - \E \Xal_s  )   \right)\right) \, ds  \right|^4                     \\
		 & \leq
		\int_0^t \left( \E \left| D_xb(\eta_s,s) ( \Xal_s - \E \Xal_s)\right|^4
		+ \E \left| \left(D_xb(\eta_s ,s) (\Xal_s - \E \Xal_s  )   \right)\right|^4 \right)\, ds               \\
		 & \leq 2\norm{D_xb}^4_\infty \int_0^t \E \left| \Xal_s - \E \Xal_s    \right|^4 \, ds \,.
	\end{align*}

	Therefore,
	\begin{align*}
		\E \left| Y_t \right|^4
		 & \leq
		2 \norm{D_xb}_\infty \int_0^t \E\left| Y_s \right|^4 \, ds + 4 \eps^2 \E \left| W_t \right|^4 + C\eps^4{\E|\tilde{V}_0|^4} \\
		 & \leq
		2 \norm{D_xb}_\infty \int_0^t \E\left| Y_s \right|^4 \, ds + C\eps^2t^2 + C\eps^4 \,.
	\end{align*}
	By Gronwall's inequality, we have
	\begin{equation*}
		\E \left| Y_t  \right|^4 \leq C_1 (\eps^2 t^2 + \eps^4) e^{C_2 t} \,,
	\end{equation*}
	as desired.
\end{proof}

\begin{proof}[Proof of Lemma~\ref{lem:running-ave-Y}]
	Let $\Theta_t = \Phi({\Xal_t},t)  - \Phi(\E \Xal_t, t)$.
	By integrating by parts,
	\begin{align*}
		 & \E \left(P_T \cdot \int_0^T f(t) \Theta_t \, dt\right) \\
		 & =
		\E \left( \int_0^T \int_0^t f(s) \Theta_s \, ds \cdot dP_t   + \int_0^T f(t) \Theta_t \cdot P_t \, dt \right) \,.
	\end{align*}
	By the identity~\eqref{eq:diff-relation}, we have
	\begin{align*}
		 & \E \left( P_T \cdot \int_0^T f(t) \Theta_t \, dt   \right) \\
		 & =
		\E \left( \int_0^T \int_0^t f(s) \Theta_s \, ds \cdot \left( -\frac{P_t}{\eps} \, dt + dW_t \right)
		+ \int_0^T f(t) \Theta_t \cdot P_t \, dt \right)              \\
		 & =
		-\frac{1}{\eps} \int_0^T \E \left(  P_t  \cdot \int_0^t f(s) \Theta_s \, ds\right)  dt
		+ \E \int_0^T f(t) \Theta_t \cdot P_t \, dt   \,.
	\end{align*}
	Let $\displaystyle N_T = \int_0^T\E \left(  P_t \cdot  \int_0^t f(s) \Theta_s \, ds\right) dt $.
	We then have
	\begin{equation*}
		\frac{d}{dT} N_T = -\frac{1}{\eps}  N_T  + \E \int_0^T f(t) \Theta_t \cdot P_t \, dt \,.
	\end{equation*}
	Therefore,
	\begin{equation*}
		N_T = \int_0^T e^{-(T-t)/\eps} \E \int_0^t f(s) \Theta_s \cdot P_s \, ds \, dt \,.
	\end{equation*}
	By~\eqref{eq:EG-estimate}, we have
	\begin{equation*}
		\left| N_T \right| \leq C\left( 1 + \sqrt{T}  \right) \eps^{5/2}  \,,
	\end{equation*}
	from which~\eqref{eq:running-ave-Y} follows.
\end{proof}

\begin{proof}[Proof of Lemma~\ref{lem:average-approximate}]

	\restartsteps

	\step
	Now, let $M^j_t= b^j_t - b^j(\E \Xal_t,t)$ so that we write
	\begin{align}
		 & \E {\Vtilal[,i]_0} Y^j_t                                                                \nonumber \\
		 & =
		\E \int_0^t {\Vtilal[,i]_0} \left( {\Vtilal[,j]_0} e^{-s/\eps}
		+ \frac{1}{\eps} \int_0^s e^{-(s-r)/\eps} M^j_r \, dr \right)\,ds     \nonumber                      \\
		 & \qquad+ \E\int_0^t\Vtilal_0\left(-\frac{1}{\eps}\int_0^se^{-(s-r)/\eps}\E M^j_r\,dr
		+\sqrt{\frac{2}{\eps}} P^j_s  \right) ds\nonumber                                                    \\
		 & =
		\E \int_0^t {\Vtilal[,i]_0} \left( {\Vtilal[,j]_0} e^{-s/\eps}
		+ M^j_s - M^j_0e^{-s/\eps} - \int_0^s e^{-(s-r)/\eps}\, dM^j_r \right)\,ds    \nonumber              \\
		 & = \delta^{ij} \eps(1-e^{-t/\eps})
		+ \int_0^t \E (\Vtilal[,i]_0 M^j_s)\,ds
		- \int_0^t\int_0^s e^{-(s-r)/\eps}\, dM^j_r \,ds \label{ine:GWij}
	\end{align}
	Here, the last integral in the first equality vanishes
	and we apply integration by parts in the second equality.

	Now we estimate both integrals on the right-hand side.
	Using that
	\begin{equation}
		M^j_s = \nabla_x b^j(\E \Xal_s,s)\cdot Y_s + \frac{1}{2} Y_s \cdot D_x^2 b^j(\xi_s,s) Y_s\,ds,
		\quad 0\leq s\leq t,
		\label{eq:Mjs}
	\end{equation}
	for some intermediate value $\xi_s$ between $\Xal_s$ and $\E\Xal_s$,
	we have
	\begin{align*}
		\left|\int_0^t \E (\tilde{V}_0^{\varepsilon,i} M^j_s)\,ds\right|
		 & \leq \left\|b\right\|_{C^1} \int_0^t \left(
		\left|\E \sum_k \tilde{V}_0^{\varepsilon,i} Y^k_s\right|
		+ \frac{1}{2} \E \left(\left|Y_s\right|^2\left| \tilde{V}_0^{\varepsilon,i}\right|\right)
		\right)\, ds                                   \\
		 & \leq \left\|b\right\|_{C^1} \int_0^t \left(
		\sum_k \left|\E \tilde{V}_0^{\varepsilon,i} Y^k_s\right|
		+ \frac{1}{2} \left(\E \left|Y_s\right|^4\right)^{1/2}
		\right)\, ds                                   \\
		 & \leq \left\|b\right\|_{C^1} \int_0^t
		\sum_k \left|\E \tilde{V}_0^{\varepsilon,i} Y^k_s\right|\,ds
		+ C_1(\eps t^2 + \eps^2t) e^{C_2 t}                   \,.
	\end{align*}
	Here, we use \eqref{eq:Y4t} in the third inequality.

	Similarly, using \eqref{eq:Mjs} and write $\Xi_r = \nabla b_r^jV_r + \partial_t b^j_r$, we have
	\begin{align*}
		 & \left|\E \left(\tilde{V}_0^{\varepsilon,i}\int_0^t\int_0^s e^{-(s-r)/\eps}\,dM^j_r\,ds\right)\right| \\
		 & = \left|\E \left(\tilde{V}_0^{\varepsilon,i}\int_0^t\int_0^s
		e^{-(s-r)/\varepsilon} \left(\Xi_r - \E\Xi_r\right)\,dr\,ds\right)\right|                               \\
		 & \leq \left(\E \left(\tilde{V}_0^{\varepsilon,i}\right)^2\right)^{1/2}
		\left(\int_0^t \int_r^t e^{-(s-r)/\varepsilon}
		\left(\E \left|\Xi_r - \E\Xi_r\right|^2\right)^{1/2}\,ds\,dr\right)                                     \\
		 & \leq 2\varepsilon \left\|b\right\|_{C^1} \int_0^t
		\left(1-e^{-(t-r)/\varepsilon}\right)
		\left( \left(\E \left|V_r^\varepsilon\right|^2\right)^{1/2} +1\right)\,dr                               \\
		 & \leq C \varepsilon t \,.
	\end{align*}
	Here, in the second inequality, we use that
	\begin{equation*}
		\left(\E \left|\Xi_r - \E \Xi_r\right|^2\right)^{1/2}\leq
		\left\|b\right\|_{C^1} \left(2 \left(\sum_k \E \left|V^{\varepsilon,k}_r\right|^2\right)^{1/2} + 2\right).
	\end{equation*}

	Finally, we use the previous two estimates in \eqref{ine:GWij} and sum over $j$.
	We have
	\begin{equation*}
		\begin{split}
			\sum_{j=1}^n \left|  \E {\Vtilal[,i]_0} Y^j_t  \right|
			 & \leq C \int_0^t \sum_{j=1}^n \left|\E \tilde{V}_0^{\varepsilon,i} Y^j_s\right|\,ds
			+ C_1 (\eps t^2 + \eps t + \eps)e^{C_2 t}   \,.
		\end{split}
	\end{equation*}
	By Gronwall's inequality,
	\begin{equation*}
		\sum_{j=1}^n \left|  \E {\Vtilal[,i]_0} Y^j_t  \right|
		\leq C_1 (\eps t^2 + \eps t + \eps + \varepsilon^2t)e^{C_2 t} \,,
	\end{equation*}
	which~\eqref{eq:average-approximate} follows immediately.
\end{proof}

\begin{proof}[Proof of Lemma~\ref{lem:average-approximate-2}]
	By Taylor expansion,
	\begin{align*}
		G(\Xal_t, t) - G(\E \Xal_t, t)
		 & = \grad_x G (\E \Xal_t, t)\cdot Y_t
		+ \frac{1}{2}  Y_t \cdot D_x^2 G(\eta_t, t)  Y_t
	\end{align*}
	where $\eta_t$ is a value between $\E \Xal_t$ and $\Xal_t$.
	Therefore,
	\begin{align*}
		 & \left| \E \left({\Vtilal[,i]_0}  \left( G(\Xal_t, t) - G(\E \Xal_t, t)\right)  \right)   \right| \\
		 & \leq \norm{G}_{C^1} \sum_{j=1}^n  \left|\E {\Vtilal[,i]_0}  Y^j_t \right|
		+ \frac{1}{2}\norm{G}_{C^2} \E \left( \left|{\Vtilal[,i]_0}\right| \left| Y_t \right|^2 \right)     \\
		 & \leq \left\|G\right\|_{C^1} C_1\eps(1+t)^2 e^{C_2t}
		+ \frac{1}{2} \left\|G\right\|_{C^2}
		\left(\E \left| \tilde{V}_0^{\varepsilon,i}\right|^2\right)^{1/2}
		\left(\E \left|Y_t\right|^4\right)^{1/2} \,.
	\end{align*}
	The last inequality follows Lemma~\ref{lem:average-approximate} and H\"{o}lder inequality.
	Using the estimate~\eqref{eq:Y4t} we obtain \eqref{eq:average-approximate-upgrade}
	as desired.
\end{proof}

\bibliographystyle{plain}
\bibliography{bibo}

\end{document}